\newcommand{\ot}{\otimes}\newcommand{\vt}{\overline{\otimes}}
\newcommand{\ft}{\overline{\otimes}_{\cl{F}}}
\newcommand{\cl}{\mathcal}
\newcommand{\sub}{\subseteq}
\newcommand{\bfc}{\textit}
\newcommand{\ap}{\infty}
\newcommand{\wsp}{\overline{\mathrm{span}}^{\mathrm{w}^{*}}}
\newcommand{\wh}{\widehat}
\newcommand{\wt}{\widetilde}
\newcommand{\BLT}{B(L^{2}(G))}
\newcommand{\LO}{L^{1}(G)}
\newcommand{\LT}{L^{2}(G)}
\newcommand{\LI}{L^{\infty}(G)}
\newcommand{\CI}{\mathbb{C}1_{H}}
\newcommand{\CIK}{\mathbb{C}1_{K}}
\newcommand{\la}{\langle}
\newcommand{\ra}{\rangle}
\newcommand{\fcr}{\rtimes^{\cl{F}}}
\newcommand{\scr}{\overline{\rtimes}}
\newcommand{\dfcr}{\ltimes^{\cl{F}}}
\newcommand{\dscr}{\overline{\ltimes}}
\newcommand{\Id}{\mathrm{id}}
\newcommand{\Ad}{\mathrm{Ad}}
\newcommand{\B}{\mathrm{Bim}(J^{\perp})}
\newcommand{\Sa}{N(J)}
\newcommand{\lt}{\ltimes}
\newtheorem{thm}{Theorem}[section]
\newtheorem{lem}[thm]{Lemma}
\newtheorem{pro}[thm]{Proposition}
\newtheorem{cor}[thm]{Corollary}
\theoremstyle{definition}
\newtheorem{defin}[thm]{Definition}
\newtheorem{remark}[thm]{Remark}
\begin{document} 
	
	\begin{abstract} 
		Let $ G $ be a locally compact group. We study the categories of $ L^{\infty}(G) $-comodules and $ L(G) $-comodules in the setting of dual operator spaces and the associated crossed products. It is proved that every $ L^{\infty}(G) $-comodule is non-degenerate and saturated, whereas every $ L(G) $-comodule is non-degenerate if and only if every $ L(G) $-comodule is saturated if and only if $ G $ has the approximation property in the sense of Haagerup and Kraus \cite{HK}. This allows us to extend known results from the duality theory of crossed products of von Neumann algebras (such as Takesaki-duality and the Digernes-Takesaki theorem) to the recent theory of crossed products of dual operator spaces. As applications, we obtain a characterization of groups with the approximation property in terms of the related crossed products improving a recent result of Crann and Neufang \cite{CN} and we generalize a theorem of Anoussis, Katavolos and Todorov \cite{AKT1} providing a less technical proof of it. Furthermore, this approach provides an answer to a question raised by the authors in \cite{AKT1}.
	\end{abstract}	

	\title[crossed products and the AP]{Crossed products of dual operator spaces and a characterization of groups with the approximation property} 
	\author[D. Andreou]{Dimitrios Andreou}
	\address{Department of Mathematics, National and Kapodistrian University of Athens, Athens 157 84, Greece}
	\email{dimandreou@math.uoa.gr}
	\subjclass[2010]{Primary 46L07, 46L55; Secondary 22D05, 22D25}
	\keywords{Crossed products, dual operator spaces, locally compact groups, the approximation property}	 
	\maketitle 	
		
\section{Introduction}\label{sec1}	

The study of possible ways of extending the classical crossed product construction for von Neumann algebras and related results (e.g. the Takesaki-type duality and the Digernes-Takesaki theorem) to the setting of more general operator spaces has recently received special attention by many different authors. This is because crossed product-type constructions for actions of locally compact groups on operator spaces arise naturally as necessary tools at the meeting point of operator space theory and abstract harmonic analysis and more precisely in the study of harmonic operators, non-commutative Poisson boundaries and groups with the approximation property (AP) in the sense of Haagerup and Kraus \cite{HK}. For example, see \cite{AKT}, \cite{CN}, \cite{Ha1}, \cite{Iz}, \cite{JN}, \cite{KNR}, \cite{SS}, \cite{UZ} and the references therein.

In this article we attempt a more systematic study of crossed products associated with actions of locally compact groups (and `group duals') on dual operator spaces. The main purpose is to investigate under which conditions the fundamental properties of crossed products of von Neumann algebras still hold in the category of dual operator spaces. It turns out that some known results can be easily generalized in this more general setting whereas others require non-trivial further assumptions (e.g. the acting group to have the AP).

We follow the Hopf-von Neumann algebra approach since it is the natural way to formulate the duality theory for locally compact groups in the non-abelian case. This idea has already been applied in a very fruitful way to the classical duality theory of crossed products of von Neumann algebras (see e.g. \cite{La}, \cite{La2}, \cite{Nak}, \cite{NaTa}, \cite{SVZ1}, \cite{SVZ2} and \cite{Tak2}) as well as to the study of crossed products of operator spaces (see \cite{CN}, \cite{Ha1} and \cite{SS}). More precisely, if $ G $ is a locally compact group (not necessarily abelian), then $ G $ and its  `dual' should respectively be replaced by the Hopf-von Neumann algebras $ (\LI,\alpha_{G}) $ and $ (L(G),\delta_{G}) $. Furthermore, group actions should be considered as $ \LI $-comodules and actions of group duals should be thought of as $ L(G) $-comodules (see section \ref{sec2} for the precise definitions).

There are two natural ways to define the crossed product for an $ \LI $-comodule $ (X,\alpha) $, which both coincide with the usual von Neumann algebraic crossed product if $ X $ is a von Neumann algebra and $ \alpha $ corresponds to a $ G $-action by unital normal *-automorphisms, namely the \bfc{spatial crossed product} $ X\scr_{\alpha}G $ and the \bfc{Fubini crossed product} $ X\fcr_{\alpha}G $ (see Definitions \ref{deffcr}, \ref{def4.2} and \ref{def4.4}). The definition of $ X\scr_{\alpha}G $ is suggested by the covariance relations which allow us to describe the crossed product of a von Neumann algebra as the normal $ L(G) $-bimodule generated by the image of the corresponding comodule action (see the comment right after Definition \ref{def4.4}). On the other hand, the definition of $ X\fcr_{\alpha}G $ is suggested by the Digernes-Takesaki theorem which characterizes the crossed product of a von Neumann algebra as the space of fixed points of a certain action (see the comments before Definition \ref{deffcr}).

Similarly, for an action of the dual object of a locally compact group $ G $ on a dual operator space, which in the non-abelian case is an $ L(G) $-comodule $ (Y,\delta) $, the spatial and the Fubini crossed products $ Y\dscr_{\delta}G $ and $ Y\dfcr_{\delta}G $ respectively are defined in an analogous way (see Definitions \ref{dtil}, \ref{def4.2i} and \ref{def2.11}).

An interesting class of dual operator spaces arising as Fubini crossed products of w*-closed subspaces of $ \LI $ by $ G $ acting by left translations is the class of jointly harmonic operators introduced by Anoussis, Katavolos and Todorov \cite{AKT} (see \cite[Theorem 7.1]{AKT} and \cite[Proposition 5.1]{DA}). Jointly harmonic operators naturally generalize the notion of non-commutative Poisson boundary of a probability measure on $ G $ initially introduced by Izumi \cite{Iz} and further studied by Jaworski and Neufang \cite{JN} and Kalantar, Neufang and Ruan \cite{KNR}. The main result concerning the non-commutative Poisson boundary of a single probability measure $ \mu $ on $ G $ is that it can be realized as the crossed product of the space of $ \mu $-harmonic functions which admits a certain von Neumann algebra structure. The analogue of this in the case of the jointly harmonic operators with respect to an arbitrary subset of the measure algebra $ M(G) $ is equivalent to the equality $ X\scr_{\alpha}G=X\fcr_{\alpha}G $ where $ X $ is a certain translation-invariant subspace of $ \LI $, not necessarily a von Neumann subalgebra, and $ \alpha $ is the canonical action of $ G $ by left translations (again by \cite[Theorem 7.1]{AKT} and \cite[Proposition 5.1]{DA}). This was shown to be true in \cite{AKT} when $ G $ is weakly amenable discrete, or abelian, or compact, but it remains open whether it holds for arbitrary locally compact groups.

On the other hand, Crann and Neufang \cite{CN} generalized the aforementioned result of \cite{AKT} by proving that if $ G $ is a locally compact group with the approximation property (AP) of Haagerup-Kraus, then $ X\scr_{\alpha}G=X\fcr_{\alpha}G $ for any $ \LI $-comodule $ (X,\alpha) $. Note that for a discrete group $ G $, the converse implication is an easy consequence of \cite[Theorem 2.1]{HK} and the observation that if $ \alpha $ is the trivial action on $ X $, then we have $ X\fcr_{\alpha}G=X\ft L(G) $ and $X\scr_{\alpha}G =X\vt L(G) $ (see Remark \ref{rem9}). We will prove that the converse of the above result of Crann and Neufang is true for arbitrary locally compact groups (see Theorem \ref{thm4}), thus providing another characterization of the AP.    

In \cite{SS} Salmi and Skalski proved that $ X\scr_{\alpha}G=X\fcr_{\alpha}G $ when $ X $ is a non-degenerately represented W*-TRO and $ \alpha $ is an action of a locally compact group $ G $ on $ X $ by W*-TRO-automorphisms. Moreover, they extended this result for actions of locally compact quantum groups on W*-TRO's. From this it is easily inferred that $ Y\dscr_{\delta}G = Y\dfcr_{\delta}G $ if $ (Y,\delta) $ is an $ L(G) $-comodule with $ Y $ being a W*-TRO and $ \delta $ being an $ L(G) $-action and also a non-degenerate W*-TRO morphism (since $ L(G) $ has a natural locally compact quantum group structure). We will show a generalization of the latter, namely that $ Y\dscr_{\delta}G = Y\dfcr_{\delta}G $ holds true for any $ L(G) $-comodule $ (Y,\delta) $ (see Theorem \ref{thm1}). This provides an alternative --- and perhaps less technical --- proof of a theorem by Anoussis, Katavolos and Todorov \cite[Theorem 3.2]{AKT1} on a certain class of normal masa bimodules, which arise as crossed products of $ L(G) $-comodules that are not necessarily W*-TRO's (see Proposition \ref{p12}).

Our main results could be briefly summarized in the following four statements:
\begin{itemize}
	\item [(1)] For a locally compact group $ G $ and any $ L(G) $-comodule $ (Y,\delta) $ it holds that \[Y\dscr_{\delta}G=Y\dfcr_{\delta}G.\]
	\item [(2)] For a locally compact group $ G $ and any $ \LI $-comodule $ (X,\alpha) $ the Takesaki-duality holds for both the spatial and the Fubini crossed products, that is \[(X\fcr_{\alpha}G)\dfcr_{\wh{\alpha}}G=(X\scr_{\alpha}G)\dscr_{\wh{\alpha}}G \simeq X\vt\BLT.\]
	\item [(3)] For a locally compact group $ G $, the following are equivalent:
	\begin{itemize}
		\item $ G $  has the approximation property (AP) in the sense of \cite{HK};
		\item For any $ L(G) $-comodule $ (Y,\delta) $, the Takesaki-duality holds for the spatial crossed products, i.e. \[(Y\dscr_{\delta}G)\scr_{\wh{\delta}}G \simeq Y\vt\BLT;\]
		\item For any $ L(G) $-comodule $ (Y,\delta) $, the Takesaki-duality holds for the Fubini crossed products, i.e. \[(Y\dfcr_{\delta}G)\fcr_{\wh{\delta}}G \simeq Y\vt\BLT.\]
	\end{itemize}	
	 \item [(4)] A locally compact group $ G $ has the AP if and only if $ X\scr_{\alpha}G=X\fcr_{\alpha}G $ for any $ \LI $-comodule $ (X,\alpha) $.
\end{itemize}

Statements (1) and (2) directly generalize known facts about crossed products of von Neumann algebras and do not require special assumptions for the group $ G $. On the other hand, their dual versions, that is statements (4) and (3) respectively, are valid if and only if the group $ G $ has the approximation property. This is very interesting since the analogues of (3) and (4) in the von Neumann algebra setting  are true  even for locally compact groups without the approximation property. For example, see \cite[Chapter I, Theorems 2.5 and 2.7]{NaTa} as well  as \cite[Chapter II, Theorem 1.2]{NaTa}.

Our key idea is that all the conditions under question in the aforementioned statements (1) to (4) can be reformulated in terms of the notions of \bfc{saturation} and \bfc{non-degeneracy} for comodules (see section \ref{sec3} for the exact definitions). As a result, the validity of statements (1) to (4) actually reflects two fundamental facts about the Hopf-von Neumann algebras $ (\LI,\alpha_{G}) $ and $ (L(G),\delta_{G}) $ proved in Lemma \ref{c2} and Proposition \ref{p9}, namely the following:

\begin{itemize}
	\item [(a)] For a locally compact group $ G $, every $ \LI $-comodule is non-degenerate and saturated.
	\item [(b)] For a locally compact group $ G $, the following are equivalent:
	\begin{itemize}
		\item Every $ L(G) $-comodule is saturated;
		\item Every $ L(G) $-comodule is non-degenerate;
		\item $ G $ has the approximation property (AP).
	\end{itemize}
\end{itemize}

This paper is organized in six sections including this introduction as section \ref{sec1}. In section \ref{sec2} we begin with presenting the basic concepts and facts concerning tensor products of dual operator spaces, general Hopf-von Neumann algebras and comodules. At the end of section 2 the basic definitions and properties regarding the Hopf-von Neumann algebras $ (\LI,\alpha_{G}) $ and $ (L(G),\delta_{G}) $ are summarized.

In section \ref{sec3}  the notions of saturation and non-degeneracy for general comodules of an arbitrary Hopf-von Neumann algebra are discussed. The results of this section will be applied in the following sections for the Hopf-von Neumann algebras $ (\LI,\alpha_{G}) $ and $ (L(G),\delta_{G}) $.

In section \ref{sec4} we study the spatial crossed product $ X\scr_{\alpha}G $ and the Fubini crossed product $ X\fcr_{\alpha}G $ for an $ \LI $-comodule $ (X,\alpha) $ as well as their dual versions $ Y\dscr_{\delta}G $ and $ Y\dfcr_{\delta}G $ for an $ L(G) $-comodule $ (Y,\delta) $. Interestingly, it turns out that \[Y\dscr_{\delta}G=Y\dfcr_{\delta}G\] for any $ L(G) $-comodule $ (Y,\delta) $ (Theorem \ref{thm1}). This is because the crossed products associated to an $ L(G) $-comodule have a natural $ \LI $-comodule structure and therefore they are non-degenerate and saturated without any further assumption for the group $ G $ (Lemma \ref{c2}).  

On the other hand, if $ (X,\alpha) $ is an $ \LI $-comodule, $ X\scr_{\alpha}G $ and $X\fcr_{\alpha}G$ admit a natural $ L(G) $-comodule structure such that $ X\scr_{\alpha}G $ is non-degenerate and $ X\fcr_{\alpha}G $ is saturated (Corollary \ref{c10}). Furthermore, it is proved that every $ L(G) $-comodule is saturated if and only if every $ L(G) $-comodule is non-degenerate if and only if $ G $ has the AP (see Proposition \ref{p9}). Thus, one would expect that for a group $ G $ without the AP the analogue of Theorem \ref{thm1} for $ \LI $-comodules is not valid. 

This is proved to be the case in section \ref{sec5}. Indeed, it is shown therein that the Takesaki-type duality for the Fubini crossed products is equivalent to the saturation property, while its analogue for the spatial crossed products is equivalent to the non-degeneracy property (see Propositions \ref{p4}, \ref{p15}, \ref{p14} and \ref{p16}). It follows that, for an $ \LI $-comodule $ (X,\alpha) $, the equality $ X\scr_{\alpha}G=X\fcr_{\alpha}G $ holds if and only if $ X\fcr_{\alpha}G $ is non-degenerate or equivalently $ X\scr_{\alpha}G $ is saturated (Theorem \ref{thm2}). Thanks to the above results, we prove that the locally compact group $ G $ has the AP if and only if $ X\scr_{\alpha}G=X\fcr_{\alpha}G $ for any $ \LI $-comodule $ (X,\alpha) $ (i.e. Theorem \ref{thm4}), which improves \cite[Corollary 4.8]{CN}.

In section \ref{sec6} we describe how the $ \LI $-bimodules defined and studied in \cite{AKT1} and \cite{AKT2} can be realized as crossed products of certain $ L(G) $-comodules which are not necessarily von Neumann algebras. This approach yields an alternative proof of one of the main results of \cite{AKT1} (namely \cite[Theorem 3.2]{AKT1}) as a special case of Theorem \ref{thm1}. Also, we answer a question raised by the authors in \cite[Question 4.8]{AKT1} by proving a stronger version of \cite[Lemma 4.5]{AKT1} (see proposition \ref{p7}). These results complement the ones in \cite{DA}, where the $L(G)$-bimodules introduced in \cite{AKT} were identified with crossed products of certain $ \LI $-comodules, thus providing an alternative and unified view of these objects in both the $ \LI $-bimodule and the $L(G)$-bimodule case.

\section{Preliminaries}\label{sec2}	
In this section we list the basic definitions and facts concerning tensor products of dual operator spaces, Hopf-von Neumann algebras and comodules. We follow mainly M. Hamana's terminology (see \cite{Ha1} and \cite{Ha2} for more details) with the difference that in our context all comodules are assumed to be dual operator spaces (concrete in most cases) and comodule actions and morphisms are assumed to be w*-continuous.

\subsection{Tensor products}

Let $ X\sub B(H) $ and $ Y\sub B(K) $ be dual operator spaces, i.e. w*-closed subspaces of $ B(H) $ and $ B(K) $ respectively, where $ H,\ K $ are Hilbert spaces. The \bfc{spatial tensor product} of $ X $ and $ Y $ is the subspace of $ B(H\ot K) $ defined by \[X\vt Y=\wsp\{x\ot y:\ x\in X,\ y\in Y \}, \]
where $ (x\ot y)(h\ot k) =(xh)\ot(yk)$, for $ h\in H ,\ k\in K$.

The \bfc{Fubini tensor product} of $ X $ and $ Y $ is the space:
\begin{align*} 
X\ft Y=\{x\in B(H\ot K) :\  
&(\mathrm{id}_{B(H)}\ot \phi)(x)\in X,\ (\omega\ot \mathrm{id }_{B(K)})(x)\in Y,\\  &\forall\omega\in B(H)_{*},\ \forall\phi\in B(K)_{*} \}.
\end{align*}
Obviously, we have $ X\vt Y\sub X\ft Y $. Furthermore, it is immediate from the definition that for any families $ \{X_{i}\}_{i\in I} $ and $ \{Y_{j}\}_{j\in J} $ of dual operator spaces we have that \[\bigcap_{i,j}(X_{i}\ft Y_{j})=(\bigcap_{i} X_{i} ) \ft ( \bigcap_{j}Y_{j} ).  \]

If $ M $ is an injective von Neumann algebra (in particular, 
of type I) then $ X\vt M= X\ft M $, for every dual operator space $ X $ (see \cite[Theorem 1.9]{Kra}) and this implies that	
\[X\ft Y=(X\vt B(K)) \cap (B(H)\vt Y).\]
   
Also, for any von Neumann algebras $ M$ and $ N $, it holds that $ M\vt N= M\ft N $ (see \cite[Theorem 7.2.4]{ER}).	  

\subsection{Hopf-von Neumann algebras and comodules}

A \bfc{Hopf-von Neumann algebra} is a pair $ (M,\Delta) $, where $ M $ is a von Neumann algebra and $ \Delta\colon M\to M\vt M $ is a normal unital *-monomorphism called the \bfc{comultiplication} of $ M $, such that the coassociativity rule holds:
\[(\Delta\otimes \mathrm{id }_{M})\circ\Delta=(\mathrm{id }_{M}\otimes\Delta)\circ\Delta. \]	

Let $ (M,\Delta) $ be a Hopf-von Neumann algebra. An	$ M $-\bfc{comodule} $ (X,\alpha) $ is a dual operator space $ X $ with a w*-continuous complete isometry $ \alpha\colon X\to X\ft M $ which satisfies
\[(\alpha\otimes \mathrm{id }_{M})\circ\alpha=(\mathrm{id }_{X}\otimes\Delta)\circ\alpha. \]
In this case, we say that $ \alpha $ is an \bfc{action} of $ M $ on $ X $ or an $ M $-\bfc{action} on $ X $.

A w*-closed subspace $ Y $ of $ X $ is called an $ M $-\bfc{subcomodule} of $ X $ if $\alpha(Y)\sub Y\ft M$. In this case we write $ Y\leq X $ and $ Y $ is indeed an $ M $-comodule for the action $ \alpha|_{Y} $. 

An $ M $-\bfc{comodule morphism} between $ M $-comodules $ (X,\alpha) $ and $ (Y,\beta) $ is a w*-w*-continuous complete contraction $ \phi\colon X\to Y $, such that
\[\beta\circ\phi=(\phi\otimes \mathrm{id }_{M})\circ\alpha. \]

An $ M $-comodule morphism is called an $ M $-\bfc{comodule monomorphism} (resp. isomorphism) if it is a complete isometry (resp. surjective complete isometry) and we write $ X\simeq Y $ for isomorphic $ M $-comodules.

If $ X $ is any dual operator space, then the Fubini tensor product $ X\ft M $ becomes an $ M $-comodule, called a \bfc{canonical} $ M $-comodule, with the action \[\mathrm{id }_{X}\otimes\Delta\colon X\ft M\to X\ft M\ft M.\] 

 More generally, for any dual operator space $ X $ and any $ M $-comodule $ (Y,\beta) $, the map $ \Id_{X}\ot\beta $ defines an $ M $-action on the Fubini product $ X\ft Y $.

If $ N $ is a von Neumann algebra, then an $ M $-action $ \pi\colon N\to N\vt M $ on $ N $ that is additionally a normal unital *-monomorphism will be called a W*-$ M $-\bfc{action} on $ N $ and $ (N,\pi) $ will be called a W*-$ M $-\bfc{comodule}. The terms W*-$ M $-\bfc{subcomodule}, W*-$ M $-\bfc{comodule morphism} etc, are defined accordingly. 

\begin{remark}\label{rem1} 
	Let $ (M,\Delta) $ be a Hopf-von Neumann algebra. Every $ M $-comodule $ (X,\alpha) $ is isomorphic to an $ M $-subcomodule of a canonical $ M $-comodule, which may be taken to be the canonical W*-$ M $-comodule $ (B(H)\vt M,\mathrm{id }_{B(H)}\otimes\Delta) $ for some Hilbert space $ H $.
	
	Indeed, the image $ \alpha(X) $ of $ X $ under the action $ \alpha $ is an $ M $-subcomodule of the canonical $ M $-comodule $ X\ft M $, since we have:
	\[(\mathrm{id }_{X}\otimes\Delta)\circ\alpha(X)=(\alpha\otimes \mathrm{id }_{M})\circ\alpha(X)\sub\alpha(X)\ft M \] 
	and $ \alpha $ is an $ M $-comodule isomorphism of $ X $ onto $ \alpha(X) $ and thus
	\[X\simeq\alpha(X)\leq X\ft M. \]
	Furthermore, we may suppose that $ X\sub B(H) $ as a w*-closed subspace for some Hilbert space $ H $, thus $ X\ft M \leq B(H)\vt M $.
\end{remark}

\begin{remark}\label{rem3}
	For any Hopf-von Neumann algebra $ (M,\Delta) $, the predual $ M_{*} $ of $ M $ becomes naturally a Banach algebra with the product defined by
	\[\omega \varphi=(\omega\otimes \varphi)\circ\Delta, \]
	for $ \omega, \varphi\in M_{*} $. Furthermore, an $ M $-comodule $ (X,\alpha) $ becomes an $ M_{*} $-Banach module with the module operation defined as
	\[\omega\cdot x=(\mathrm{id }_{X}\otimes \omega)\circ\alpha(x),\quad \omega\in  M_{*},\ x\in X. \] It is easy to verify that the $ M $-subcomodules of $ X $ are exactly the $ M_{*} $-submodules of $ X $ with respect to the above $ M_{*} $-module action.
	
	Also, it is easy to see that a w*-continuous complete contraction $ \phi\colon X\to Y $ between two $ M $-comodules $ X $ and $ Y $ is an $ M $-comodule morphism if and only if $ \phi $ is an $ M_{*} $-module homomorphism.
\end{remark}

The notion of fixed points  is of great importance in the study of comodules of Hopf-von Neumann algebras and in the study of crossed products as will be obvious in the next sections. 

\begin{defin} 
Let $ (X,\alpha) $ be an $ M $-comodule over a Hopf-von Neumann algebra $ (M,\Delta) $. The \bfc{fixed point subspace} of $ X $ is the operator space
\[X^{\alpha}=\{x\in X:\ \alpha(x)=x\otimes1_{M} \}.\] 	
\end{defin}

Note that $ X^{\alpha} $ is obviously an $ M $-subcomodule of $X$. Furthermore, it is easy to see that any $ M $-comodule isomorphism $ \phi\colon X\to Y $ between two $ M $-comodules $ (X,\alpha) $ and $ (Y,\beta) $ maps $ X^{\alpha} $ onto $ Y^{\beta} $.

Another important notion concerning actions of Hopf-von Neumann algebras is \bfc{commutativity of actions}:

\begin{defin}\label{def2.3} 
	Let $ (M_{1},\Delta_{1}) $ and $ (M_{2},\Delta_{2}) $ be two Hopf-von Neumann algebras and $ \alpha_{1} $,  $ \alpha_{2} $ be actions of $ M_{1} $ and $ M_{2} $ on the same operator space $ X $ respectively. We say that $ \alpha_{1} $ and  $ \alpha_{2} $ \bfc{commute} if
	\[(\alpha_{1}\otimes \mathrm{id }_{M_{2}})\circ\alpha_{2}=(\mathrm{id }_{X}\otimes\sigma)\circ(\alpha_{2}\otimes \mathrm{id }_{M_{1}})\circ\alpha_{1}, \]
	where $ \sigma\colon M_{2}\vt M_{1}\to M_{1}\vt M_{2}:\ x\otimes y\mapsto y\otimes x $ is the flip isomorphism.
\end{defin}

The next lemma due to Hamana \cite{Ha1} essentially states that commuting actions can yield new (non-trivial) actions on the fixed point space of a given comodule and it will be very useful in the following.
 
\begin{lem}\label{lem2.3}\cite[Lemma 5.2]{Ha1} 
	If $ \alpha_{1} $ and $ \alpha_{2} $ are commuting actions on the same operator space $ X $ (Definition \ref{def2.3}) of two Hopf-von Neumann algebras $ M_{1} $ and $ M_{2} $ respectively, then the fixed point subspace $X^{\alpha_{1}} $ is an $ M_{2} $-subcomodule of $ (X,\alpha_{2}) $, i.e. the restriction $ \alpha_{2}|_{X^{\alpha_{1}}} $ is an action of $ M_{2} $ on $ X^{\alpha_{1}} $.
\end{lem}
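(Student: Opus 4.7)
The plan is to show that for every $x\in X^{\alpha_{1}}$ we have $\alpha_{2}(x)\in X^{\alpha_{1}}\ft M_{2}$, which is exactly the statement that $X^{\alpha_{1}}$ is an $M_{2}$-subcomodule. Writing $X\sub B(H)$ and $M_{2}\sub B(K_{2})$, membership in the Fubini product $X^{\alpha_{1}}\ft M_{2}$ is characterized by slicing: an element $z\in X\ft M_{2}$ lies in $X^{\alpha_{1}}\ft M_{2}$ iff $(\Id_{X}\ot\phi)(z)\in X^{\alpha_{1}}$ for every $\phi\in B(K_{2})_{*}$. Thus the whole task reduces to verifying, for each such $\phi$, the identity
\[
\alpha_{1}\bigl((\Id_{X}\ot\phi)(\alpha_{2}(x))\bigr)=(\Id_{X}\ot\phi)(\alpha_{2}(x))\ot 1_{M_{1}}.
\]

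First I would record the following elementary slice formula, valid for any $y\in X\ft M_{2}$ and any $\phi\in B(K_{2})_{*}$:
\[
\alpha_{1}\bigl((\Id_{X}\ot\phi)(y)\bigr)=(\Id_{X}\ot \Id_{M_{1}}\ot\phi)\bigl((\alpha_{1}\ot\Id_{M_{2}})(y)\bigr).
\]
This is checked by testing both sides against an arbitrary $\omega\in B(H\ot K_{1})_{*}$ of the form $\omega_{1}\ot\omega_{2}$ and using the fact that slice maps commute, so it extends by w*-continuity and linearity to all $y$. With this identity in hand, apply it to $y=\alpha_{2}(x)$.

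Next, use the commutativity assumption of Definition \ref{def2.3} to rewrite
\[
(\alpha_{1}\ot\Id_{M_{2}})\circ\alpha_{2}(x)=(\Id_{X}\ot\sigma)\circ(\alpha_{2}\ot\Id_{M_{1}})\circ\alpha_{1}(x).
\]
Since $x\in X^{\alpha_{1}}$, we have $\alpha_{1}(x)=x\ot 1_{M_{1}}$, so the right-hand side collapses to $(\Id_{X}\ot\sigma)(\alpha_{2}(x)\ot 1_{M_{1}})$. The flip $\sigma\colon M_{2}\vt M_{1}\to M_{1}\vt M_{2}$ sends $\alpha_{2}(x)\ot 1_{M_{1}}$ to the element of $X\ft M_{1}\ft M_{2}$ obtained from $\alpha_{2}(x)$ by inserting $1_{M_{1}}$ in the middle leg. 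Slicing this by $\Id_{X}\ot\Id_{M_{1}}\ot\phi$ produces $(\Id_{X}\ot\phi)(\alpha_{2}(x))\ot 1_{M_{1}}$, which by the slice formula above equals $\alpha_{1}\bigl((\Id_{X}\ot\phi)(\alpha_{2}(x))\bigr)$. This gives the required fixed-point identity.

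The only thing one has to be careful about is the bookkeeping with the three-fold Fubini product and the flip: to make rigorous the informal ``insert $1_{M_{1}}$ in the middle leg'' one argues by w*-continuity from the dense set of elements $\alpha_{2}(x)$ whose slices $(\omega\ot\Id_{M_{2}})(\alpha_{2}(x))$ lie in $M_{2}$, combined with the identification $X\ft M_{1}\ft M_{2}=(X\vt B(K_{1})\vt B(K_{2}))\cap(\cdots)$ using that $B(K_{1})$ and $B(K_{2})$ are injective. This is really the only subtle step; the rest of the argument is purely formal manipulation of commuting actions and fixed points.
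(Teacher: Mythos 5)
Your argument is correct. The paper itself gives no proof of this lemma — it is quoted from Hamana \cite[Lemma 5.2]{Ha1} — so there is nothing to compare against, but your slice-map argument is the standard one and is complete: the reduction of membership in $X^{\alpha_{1}}\ft M_{2}$ to the condition $(\Id_{X}\ot\phi)(\alpha_{2}(x))\in X^{\alpha_{1}}$ for all $\phi$, the commutation of $\alpha_{1}\ot\Id_{M_{2}}$ with right slice maps, and the use of the commutativity identity together with $\alpha_{1}(x)=x\ot 1_{M_{1}}$ all check out, and the ``insert $1_{M_{1}}$ in the middle leg'' step is made rigorous simply by pairing against product functionals $\omega_{1}\ot\omega_{2}\ot\phi$, whose span is norm-dense in the predual.
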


\subsection{Hopf-von Neumann algebras associated with groups}

For the rest of this paper, $ G $ will denote a locally compact (Hausdorff) group with left Haar measure $ ds $ and modular function $ \Delta_{G} $.

As usual, we identify $L^{\infty}(G)  $ with the von Neumann subalgebra of $ \BLT $ of multiplicative operators acting on $ L^{2}(G) $. Identifying $ L^{\infty}(G\times G)$ with $ L^{\infty}(G)\vt L^{\infty}(G) $ it is not hard to check that the map $ \alpha_{G}\colon L^{\infty}(G)\to L^{\infty}(G)\vt L^{\infty}(G) $ given by
\[ \alpha_{G}(f)(s,t)=f(ts),\quad s, t\in G, \] is a comultiplication on $ \LI $.

 Also, identifying $  B(L^{2}(G))\vt B(L^{2}(G))$ with $ B(L^{2}(G\times G)) $ one can see that the fundamental unitary operator $ V_{G}\in B(L^{2}(G\times G))$  defined by the formula 
\[V_{G}f(s,t)=f(t^{-1}s,t),\quad f\in L^{2}(G\times G),\ s, t\in G, \]
induces $ \alpha_{G} $, that is: 
\[\alpha_{G}(f)=V_{G}^{*}(f\otimes1)V_{G},\quad f\in \LI. \] 

 So $ (L^{\infty}(G),\alpha_{G}) $ is a Hopf-von Neumann algebra. In addition, the product defined on the predual $ \LO\simeq \LI_{*} $ by $ \alpha_{G} $ (see Remark \ref{rem3}) is given by 
 \[hk=k\ast h,\quad\forall h,k\in\LO, \]
 where \[(k\ast h)(t)=\int_{G}k(s)h(s^{-1}t)\ ds,\quad t\in G,\] is the usual convolution on $ \LO $.

Another basic example of a Hopf-von Neumann algebra associated with the group $ G $ is the left von Neumann algebra of $ G $, i.e. the algebra $ L(G):= \lambda(G)''\subseteq B(L^{2}(G)) $ generated by the left regular representation $ \lambda\colon G\ni s\mapsto\lambda_{s}\in B(L^{2}(G)) $,
\[\lambda_{s}\xi(t)=\xi(s^{-1}t) , \qquad\xi\in L^{2}(G). \]

The comultiplication on $ L(G) $ is given by the map $ \delta_{G}\colon L(G)\to L(G)\vt L(G) $ with
\[\delta_{G}(\lambda_{s})=\lambda_{s}\otimes\lambda_{s},\quad s\in G \]
and it is easily verified that  $ \delta_{G} $ is induced by the fundamental unitary operator $ W_{G}\in B(L^{2}(G\times G)) $, given by the formula
\[W_{G}f(s,t)=f(s,st),\quad f\in L^{2}(G\times G),\ s,t\in G. \]
This means that
\[\delta_{G}(x)=W_{G}^{*}(x\otimes1)W_{G},\quad x\in L(G). \]

The predual $ L(G)_{*} $ of $ L(G) $ is isometrically isomorphic to the Fourier algebra $ A(G) $ of $ G $ (see e.g. \cite{Ey}):
\[A(G)=\{u\colon G\to \mathbb{C}:\ \exists\xi, \eta\in\LT,\ \forall s\in G,\ u(s)=\la \lambda_{s}\xi,\ \eta \ra \}. \]
The pointwise product on $ A(G) $  coincides with that induced on the predual $ L(G)_{*} $ by the comultiplication $ \delta_{G} $ of $ L(G) $ (see Remark \ref{rem3}), because:
\[\la \lambda_{s} ,\ uv\ra=u(s)v(s)=\la \lambda_{s} ,\ u\ra\la \lambda_{s} ,\ v\ra=\la \lambda_{s}\ot\lambda_{s} ,\ u\ot v\ra=\la \delta_{G}(\lambda_{s}),\ u\ot v\ra. \]

Note that $ V_{G}\in L(G)\vt L^{\infty}(G) $ and $ W_{G}\in L^{\infty}(G)\vt L(G) $ and thus $ \alpha_{G} $ and $ \delta_{G} $ extend to actions of $ L^{\infty}(G) $ and $ L(G) $ on $ B(L^{2}(G)) $ respectively, via the formulas
\[ \alpha_{G}(x)=V_{G}^{*}(x\otimes1)V_{G},\quad x\in B(L^{2}(G)),  \]
\[\delta_{G}(x)=W_{G}^{*}(x\otimes1)W_{G}, \quad x\in B(L^{2}(G)). \]
Also, we will need the $ \LI $-action $ \beta_{G}\colon\BLT\to\BLT\vt\LI $, with \[\beta_{G}(x)=U_{G}^{*}(x\ot1)U_{G},\quad x\in\BLT, \]
where $ U_{G} $ is the unitary on $ L^{2}(G\times G) $ defined by \[U_{G}f(s,t)=\Delta_{G}(t)^{1/2}f(st,t),\quad f\in L^{2}(G\times G),\ s,t\in G.\]

 It is not hard to check that $ U_{G}\in R(G)\vt\LI $, where $ R(G)=\rho(G)''=L(G)' $ is the right group von Neumann algebra generated by the right  regular representation of $ G $ on $ L^{2}(G) $:
\[\rho_{s}f(t)=\Delta_{G}(s)^{1/2}f(ts),\qquad s, t\in G,\ f\in L^{2}(G). \]

Furthermore, if $ \sigma\colon\BLT\vt\BLT\to\BLT\ot\BLT:\ x\ot y\mapsto y\ot x $ is the flip isomorphism, then one can verify that 
\[\sigma\circ\alpha_{G}(f)=U_{G}(f\ot1)U_{G}^{*}, \text{ for all }  f\in \LI\]

Since $ \LI'=\LI $ and $ L(G)'=R(G) $ one can easily verify the following 
\[\BLT^{\delta_{G}}=\LI \] and \[ \BLT^{\beta_{G}}=L(G). \] 
	
\section{Non-degeneracy and saturation of comodules}\label{sec3}

Here we deal with the notions of non-degeneracy and saturation for general comodules. The results of this section may seem too abstract, but they will be crucial in the following nevertheless.

The importance of the notions of non-degeneracy and saturation lies in that they are necessary in order to characterize those comodules that satisfy the Takesaki-duality (see section \ref{sec5}). The term saturation was introduced in \cite{SVZ} for W*-comodules, while the term non-degeneracy was systematically used in \cite{VH} also for W*-comodules.

In \cite{SVZ1} Str\v{a}til\v{a}, Voiculescu and Zsid\'{o} proved that every W*-$ \LI $-comodule is non-degenerate and consequently that every W*-$ \LI $-comodule is saturated. Using the saturation property they extended the Takesaki-duality for W*-$ \LI $-comodules to the case of non-abelian groups.

It was proved in \cite{SVZ} that every W*-$ L(G) $-comodule is saturated if $ G $ is amenable and the amenability assumption was later removed by Landstad (see \cite[Proposition II.1.1]{SVZ2} and the remark after that). Using this, Str\v{a}til\v{a}, Voiculescu and Zsid\'{o} proved that the Takesaki-duality holds for W*-$ L(G) $-comodules (\cite[Theorem II.2.1]{SVZ2}). This time, they followed the dual path using the saturation property to show that every W*-$ L(G) $-comodule is non-degenerate and thus it satisfies the Takesaki-duality.

The same results were independently obtained by Landstad \cite{La2} and Nakagami \cite{Nak}. An alternative proof of the Takesaki-duality was given by Van Heeswijck \cite{VH} who proved the non-degeneracy for W*-$ \LI $-comodules and W*-$ L(G) $-comodules without using the saturation property.

As will be proved in section \ref{sec5}, the saturation property for $ \LI $-comodules and $ L(G) $-comodules (which are not von Neumann algebras) is equivalent to the Takesaki-duality for the respective Fubini crossed products whereas non-degeneracy is equivalent to the Takesaki-duality for the spatial crossed products. Also, we will see that every $ \LI $-comodule is non-degenerate and saturated, but this is not the case for the category of $ L(G) $-comodules, unless $ G $ has the AP (see section \ref{sec4}).

\begin{defin}\label{d1} 
	Let $ (M,\Delta) $ be a Hopf-von Neumann algebra acting on a Hilbert space $ K $ and  $ (X,\alpha) $ be an $ M $-comodule with $ X $ being a w*-closed subspace of $ B(H) $ for a Hilbert space $ H $. Then, $ (X,\alpha) $ is called \bfc{non-degenerate} if 
	\[X\vt B(K)=\wsp\{(1_{H}\ot b)\alpha(x):\ x\in X,\ b\in B(K) \}. \]
\end{defin}

\begin{remark}\label{rem8} Suppose that $ (M,\Delta) $ and $ (X,\alpha) $ are as in Definition \ref{d1} and let $ (Y,\beta) $ be an $ M $-comodule with $ Y $ being a w*-closed subspace of $ B(L) $ for some Hilbert space $ L $. If $ \phi\colon X\to Y $ is an $ M $-comodule isomorphism and $ X $ is non-degenerate, then $ Y $ is non-degenerate too.
	
Indeed, since $ \phi\colon X\to Y $ is a w*-bicontinuous completely isometric isomorphism, so is the map $ \psi:=\phi\ot\Id\colon X\vt B(K)\to Y\vt B(K) $ and clearly $ \psi $ satisfies the following:
\[\psi((1_{H}\ot b)z)=(1_{L}\ot b)\psi(z), \quad \text{ for any }z\in X\vt B(K) \text{ and }b \in B(K). \]	  
Also, since $ \beta\circ \phi=(\phi\ot \Id)\circ\alpha $ and $ \phi(X)=Y $ it follows that $ \psi(\alpha(X))=\beta(Y) $. Thus if $ X $ is non-degenerate, then so is $ Y $.

In particular, the non-degeneracy of $ (X,\alpha) $ does not depend on the Hilbert space $ H $ on which $ X $ is represented.
\end{remark} 

\begin{pro}\label{p1} If $ (M,\Delta) $ is a Hopf-von Neumann algebra acting on a Hilbert space $ K $ and $ (X,\alpha) $ is a non-degenerate $ M $-comodule with $ X\sub B(H) $ (w*-closed), then $ X=\wsp\{M_{*}\cdot X\} $.	
\end{pro}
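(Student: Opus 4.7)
The plan is to use the non-degeneracy hypothesis on the element $x \otimes 1_K$ and then apply a suitable slice map to reduce to the module action of Remark \ref{rem3}. First I would note that since $M_{*}\cdot X \subseteq X$ by definition and $X$ is w*-closed, one inclusion is trivial; so the real content is $X \subseteq \wsp\{M_{*}\cdot X\}$.

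To prove the nontrivial inclusion, I would fix $x \in X$ and consider the element $x \otimes 1_K \in X \vt B(K)$. By the non-degeneracy assumption, $x \otimes 1_K$ lies in the w*-closed linear span of the set $\{(1_H \otimes b)\alpha(y):\ y \in X,\ b \in B(K)\}$. The key step is then to pick a normal state $\psi \in B(K)_{*}$ with $\psi(1_K)=1$ (for example, any vector state) and apply the slice map $\Id \otimes \psi$. This slice map is w*-continuous on $B(H)\vt B(K)$, and since $\psi(b)x \in X$ for all $b \in B(K)$, $x \in X$ and $X$ is w*-closed, it restricts to a w*-continuous map from $X \vt B(K)$ into $X$.

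Applying $\Id \otimes \psi$ term-by-term, I would compute that it sends $x \otimes 1_K$ to $\psi(1_K)x = x$, while for a generic spanning element it yields
\[
(\Id \otimes \psi)\bigl((1_H \otimes b)\alpha(y)\bigr) \;=\; (\Id \otimes \psi_{b})(\alpha(y)),
\]
where $\psi_{b}(\cdot) := \psi(b\,\cdot\,)$ belongs to $B(K)_{*}$. Since $\alpha(y) \in X \ft M$, only the restriction $\omega_{b} := \psi_{b}|_{M} \in M_{*}$ matters, and this last quantity is exactly $\omega_{b}\cdot y \in M_{*}\cdot X$ by Remark \ref{rem3}. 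Combining this with the w*-continuity of the slice map gives $x \in \wsp\{M_{*}\cdot X\}$, as desired.

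The only point requiring any care is verifying that $\Id \otimes \psi$ genuinely restricts to a map into $X$ (not just into $B(H)$) — but this is routine from the fact that $X \vt B(K)$ is by definition the w*-closed span of simple tensors $x \otimes b$ with $x \in X$, each of which is sent into $X$. There is no serious obstacle; the proposition is essentially a direct application of the definitions once one has the idea of slicing against a state that normalizes the identity.
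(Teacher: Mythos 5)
Your argument is correct. Every step checks out: $x\otimes 1_{K}$ does lie in $\wsp\{(1_{H}\otimes b)\alpha(y)\}$ by non-degeneracy; the right slice $\Id_{B(H)}\otimes\psi$ is w*-continuous and maps $X\vt B(K)$ into $X$ (it sends the generating simple tensors into $X$, which is w*-closed); the identity $(\Id\otimes\psi)\bigl((1_{H}\otimes b)z\bigr)=(\Id\otimes\psi_{b})(z)$ holds for all $z\in B(H)\vt B(K)$; and since $\alpha(y)\in X\ft M\sub B(H)\vt M$, the slice against $\psi_{b}$ depends only on $\omega_{b}=\psi_{b}|_{M}\in M_{*}$, so it equals $\omega_{b}\cdot y$ exactly as in Remark \ref{rem3}. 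Finally, a w*-continuous linear map sends $\wsp S$ into $\wsp T(S)$, which closes the argument.

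Your route differs from the paper's in execution, though it rests on the same underlying computation. The paper slices in the \emph{first} leg: it takes $\phi\in X_{*}$ annihilating $M_{*}\cdot X$, shows $(\phi\otimes\Id_{B(K)})$ kills every generator $(1_{H}\otimes b)\alpha(y)$, invokes non-degeneracy to conclude it kills all of $X\vt B(K)$ and in particular $x\otimes 1_{K}$, and finishes with Hahn--Banach. You slice in the \emph{second} leg with a normal state and obtain $x\in\wsp\{M_{*}\cdot X\}$ directly, with no appeal to Hahn--Banach or an annihilator argument. The two proofs are essentially dual to one another; yours is slightly more constructive (it exhibits $x$ as a w*-limit of explicit combinations $\omega_{b}\cdot y$), while the paper's has the minor advantage of not needing the observation that slices of elements of $X\ft M$ depend only on the restriction of the functional to $M$. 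Either is a perfectly acceptable proof.
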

\begin{proof} Let $ \phi\in X_{*} $, such that $ \phi(\omega\cdot x)=0 $, for all $ \omega\in M_{*}$ and $ x\in X $. Then, we have:
	\begin{align*} 
	&\phi\circ(\mathrm{id}_{X}\ot\omega)\circ\alpha(x)=0,\quad \forall\omega\in M_{*},\ \forall x\in X\\
	\implies&\omega\circ(\phi\ot \mathrm{id}_{B(K)})\circ\alpha(x)=0,\quad \forall\omega\in M_{*},\ \forall x\in X\\
	\implies&(\phi\ot \mathrm{id}_{B(K)})\circ\alpha(x)=0,\quad \forall x\in X\\
	\implies& b(\phi\ot \mathrm{id}_{B(K)})\circ\alpha(x)=0,\quad \forall b\in B(K),\ \forall x\in X\\
	\implies&(\phi\ot \mathrm{id}_{B(K)})\left((1_{H}\ot b) \alpha(x)\right) =0,\quad \forall b\in B(K),\ \forall x\in X.\\
	\end{align*}
	Since $ (X,\alpha) $ is non-degenerate, the last condition implies that $ (\phi\ot \mathrm{id}_{B(K)})(y)=0 $ for any $ y\in X\vt B(K) $, thus $ \phi(x)1=(\phi\ot \mathrm{id}_{B(K)})(x\ot1)=0 $	for any $ x\in X $ and hence $ \phi=0 $. So the desired conclusion follows from the Hahn-Banach theorem.	
\end{proof}

\begin{defin}\label{d2} Let $ (M,\Delta) $ be a Hopf-von Neumann algebra and $ (X,\alpha) $ be an $ M $-comodule. The \bfc{saturation space} of $ (X,\alpha) $ is the space
	\[\mathrm{Sat}(X,\alpha):=\{y\in X\ft M:\ (\mathrm{id}_{X}\ot\Delta)(y)=(\alpha\ot \mathrm{id}_{M})(y) \}. \]
	Obviously, $ \alpha(X)\sub \mathrm{Sat}(X,\alpha) $. We say that $ (X,\alpha) $ is \bfc{saturated} if $ \alpha(X)= \mathrm{Sat}(X,\alpha) $. 
\end{defin}

\begin{pro}\label{p2} Let $ (M,\Delta) $ and $ (X,\alpha) $ be as in Definition \ref{d2}. Then the following hold:	
	\begin{itemize}
		\item[(i)] The saturation space $ \mathrm{Sat}(X,\alpha) $ is an $ M $-subcomodule of the canonical $ M $-comodule $ (X\ft M, \mathrm{id}_{X}\ot\Delta) $;
		\item[(ii)] For the $ M_{*} $-module action on $ \mathrm{Sat}(X,\alpha) $ defined by the canonical $ M $-action $ \Id_{X}\ot\Delta $, we have $ M_{*}\cdot \mathrm{Sat}(X,\alpha) \sub \alpha(X) $;
		\item[(iii)] The $ M $-comodule  $ (\mathrm{Sat}(X,\alpha),\mathrm{id}_{X}\ot\Delta) $ is non-degenerate if and only if $ (X,\alpha) $ is non-degenerate and saturated.
	\end{itemize}	 
\end{pro}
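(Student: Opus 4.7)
The plan is to exploit the fact that the saturation space is the equalizer of two w*-continuous maps and to extract all three statements from a single slice-map identity. The identity I will establish first is
\[ (\Id_X \otimes R_\omega)(y) = \alpha\bigl((\Id_X \otimes \omega)(y)\bigr), \quad y \in \mathrm{Sat}(X,\alpha),\ \omega \in M_*, \]
where $R_\omega := (\Id_M \otimes \omega)\circ\Delta : M \to M$. This comes from applying the slice map $\Id_{X\ft M}\otimes\omega$ to both sides of the defining equation $(\alpha \otimes \Id_M)(y) = (\Id_X \otimes \Delta)(y)$; the left-hand side becomes $\alpha\bigl((\Id_X\otimes\omega)(y)\bigr)$, while the right-hand side becomes $(\Id_X \otimes R_\omega)(y)$, after a routine bookkeeping of tensor positions using w*-continuity of the slice maps.

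Part (ii) is then a direct reading of this identity, since the $M_*$-module action on $X\ft M$ induced by $\Id_X\otimes\Delta$ is precisely $\omega\cdot y = (\Id_X\otimes R_\omega)(y)$, and the right-hand side manifestly lies in $\alpha(X)$. For part (i), note that $\mathrm{Sat}(X,\alpha)$ is w*-closed (as the equalizer of two w*-continuous maps), and by part (ii) it is stable under the $M_*$-action; hence by Remark \ref{rem3} it is an $M$-subcomodule of $(X\ft M,\Id_X\otimes\Delta)$.

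For part (iii), the key observation is that $\alpha$ itself is an $M$-comodule morphism from $(X,\alpha)$ to the canonical comodule $(X\ft M,\Id_X\otimes\Delta)$, because the coaction identity $(\Id_X\otimes\Delta)\circ\alpha = (\alpha\otimes\Id_M)\circ\alpha$ is precisely the morphism condition; as a w*-continuous complete isometry with w*-closed image $\alpha(X)\subseteq \mathrm{Sat}(X,\alpha)$, it yields an $M$-comodule isomorphism $(X,\alpha) \simeq (\alpha(X), \Id_X\otimes\Delta|_{\alpha(X)})$. For the reverse implication of (iii), if $(X,\alpha)$ is non-degenerate and saturated then $\mathrm{Sat}(X,\alpha) = \alpha(X)$, and Remark \ref{rem8} transfers non-degeneracy along the isomorphism. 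For the forward implication, assuming $(\mathrm{Sat}(X,\alpha),\Id_X\otimes\Delta|_{\mathrm{Sat}})$ is non-degenerate, Proposition \ref{p1} gives $\mathrm{Sat}(X,\alpha) = \wsp\{M_*\cdot \mathrm{Sat}(X,\alpha)\}$, which by (ii) is contained in $\alpha(X)$; hence $\mathrm{Sat}(X,\alpha) = \alpha(X)$, i.e.\ $(X,\alpha)$ is saturated, and Remark \ref{rem8} then transfers non-degeneracy back to $(X,\alpha)$.

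The only technical point requiring some care is verifying the slice-map identity, in particular confirming that $(\Id_{X\ft M}\otimes\omega)\circ(\alpha\otimes\Id_M) = \alpha\circ(\Id_X\otimes\omega)$ and $(\Id_{X\ft M}\otimes\omega)\circ(\Id_X\otimes\Delta) = \Id_X\otimes R_\omega$. Both are routine by w*-continuity, but the tensor-position bookkeeping is easy to mishandle. Beyond this, the proof is a clean deployment of the tools already set up, with Proposition \ref{p1}, Remark \ref{rem3}, and Remark \ref{rem8} doing the heavy lifting in (iii).
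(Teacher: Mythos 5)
Your proposal is correct and follows essentially the same route as the paper: the slice-map identity you isolate is exactly the computation the paper performs for (ii), and your treatment of (iii) via Proposition \ref{p1}, part (ii), and Remark \ref{rem8} mirrors the paper's argument. The only cosmetic difference is that you derive (i) from (ii) via the $M_{*}$-submodule characterization of subcomodules (Remark \ref{rem3}), whereas the paper gets (i) directly from the one-line containment $(\mathrm{id}_{X}\ot\Delta)(y)=(\alpha\ot\mathrm{id}_{M})(y)\in\alpha(X)\ft M\sub\mathrm{Sat}(X,\alpha)\ft M$; both are valid and rest on the same identity.
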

\begin{proof} (i) Let $ y\in \mathrm{Sat}(X,\alpha) $. Then, $ (\mathrm{id}_{X}\ot\Delta)(y)=(\alpha\ot \mathrm{id}_{M})(y)\in\alpha(X)\ft M \sub \mathrm{Sat}(X,\alpha)\ft M$. Thus, $ \mathrm{Sat}(X,\alpha) $ is an $ M $-subcomodule of  $ (X\ft M, \mathrm{id}_{X}\ot\Delta) $.
	
	(ii) Let $ \omega\in M_{*} $ and $ y\in \mathrm{Sat}(X,\alpha)$. Since $ \mathrm{Sat}(X,\alpha)\sub X\ft M $, we have that $ (\mathrm{id}_{X}\ot\omega)(y)\in X $. Therefore, we get:
	\begin{align*} 
	\omega\cdot y&=(\mathrm{id}_{X}\ot \mathrm{id}_{M}\ot\omega)\circ(\mathrm{id}_{X}\ot\Delta)(y)\\
	&=(\mathrm{id}_{X}\ot \mathrm{id}_{M}\ot\omega)\circ(\alpha\ot \mathrm{id}_{M})(y)\\
	&=\alpha\circ(\mathrm{id}_{X}\ot\omega)(y)\in \alpha(X),
	\end{align*}	
	thus $ M_{*}\cdot \mathrm{Sat}(X,\alpha) \sub \alpha(X) $.
	
	(iii) Suppose that 	$ (\mathrm{Sat}(X,\alpha),\mathrm{id}_{X}\ot\Delta) $ is non-degenerate. Then, by Proposition \ref{p1} and Proposition \ref{p2} (ii), it follows immediately that 
	\[\mathrm{Sat}(X,\alpha)=\wsp\{M_{*}\cdot \mathrm{Sat}(X,\alpha)\}\sub\alpha(X),\]
	therefore $ \mathrm{Sat}(X,\alpha)=\alpha(X) $, i.e. $ (X,\alpha) $ is saturated. On the other hand, $ (X,\alpha) $ is isomorphic with $ (\alpha(X),\mathrm{id}_{X}\ot\Delta) $, which is non-degenerate since $ \mathrm{Sat}(X,\alpha)=\alpha(X) $. Thus, $ (X,\alpha) $ is non-degenerate.
	
	Conversely, suppose that $ (X,\alpha) $ is non-degenerate and saturated. Then, since $ (X,\alpha)\simeq(\alpha(X),\mathrm{id}_{X}\ot\Delta)=(\mathrm{Sat}(X,\alpha),\mathrm{id}_{X}\ot\Delta) $, it follows that $ (\mathrm{Sat}(X,\alpha),\mathrm{id}_{X}\ot\Delta) $ is non-degenerate.	
\end{proof}

\begin{remark}\label{rem7iii} (iii) Let $ (M,\Delta) $ be a Hopf-von Neumann algebra and let $ (Y_{i},\delta_{i}) $ for $ i=1,2 $ be two $M $-comodules. Also, let $ \phi\colon Y_{1}\to Y_{2} $ be an $ M $-comodule isomorphism. Then, the map $ \phi\ot\Id_{M}\colon Y_{1}\ft M\to Y_{2}\ft M $ is an $ M $-comodule isomorphism for the canonical actions $ \Id_{Y_{i}}\ot\Delta $, $ i=1,2 $. Furthermore, $ \phi\ot\Id_{M} $ maps $ \mathrm{Sat}(Y_{1},\delta_{1}) $ onto $ \mathrm{Sat}(Y_{2},\delta_{2}) $. Indeed, for any $ x\in Y_{1}\ft M $, we have:
	\begin{align*} 
	&(\phi\ot\Id_{M})(x)\in \mathrm{Sat}(Y_{2},\delta_{2})\\\iff &(\delta_{2}\ot \Id_{M})\circ(\phi\ot\Id_{M})(x)=(\Id_{Y_{2}}\ot\Delta)\circ(\phi\ot\Id_{M})(x)\\
	\iff&((\delta_{2}\circ\phi)\ot\Id_{M})(x)=(\phi\ot\Id_{M}\ot\Id_{M})\circ(\Id_{Y_{1}}\ot\Delta)(x)\\
	\iff&\left[ ((\phi\ot\Id_{M})\circ\delta_{1})\ot\Id_{M}\right] (x)=(\phi\ot\Id_{M}\ot\Id_{M})\circ(\Id_{Y_{1}}\ot\Delta)(x)\\
	\iff&(\phi\ot\Id_{M}\ot\Id_{M})\circ (\delta_{1}\ot\Id_{M})(x)=\\&(\phi\ot\Id_{M}\ot\Id_{M})\circ(\Id_{Y_{1}}\ot\Delta)(x) \\
	\iff&(\delta_{1}\ot\Id_{M})(x)=(\Id_{Y_{1}}\ot\Delta)(x)\\
	\iff& x\in \mathrm{Sat}(Y_{1},\delta_{1}).
	\end{align*}
	Since $ \phi\ot\Id_{M} $ is onto $ Y_{2}\ft M $, the above equivalences show that it maps $ \mathrm{Sat}(Y_{1},\delta_{1}) $ onto $ \mathrm{Sat}(Y_{2},\delta_{2}) $. Therefore, saturation is preserved by comodule isomorphisms.
	
\end{remark}

The following corollary follows immediately from  Proposition \ref{p2} (iii).

\begin{cor}\label{c1}Let $ (M,\Delta) $ be a Hopf-von Neumann algebra. If every $ M $-comodule is non-degenerate, then every $ M $-comodule is saturated.	
\end{cor}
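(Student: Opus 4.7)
The plan is to deduce saturation for an arbitrary $M$-comodule $(X,\alpha)$ by applying the hypothesis to a carefully chosen auxiliary comodule, namely the saturation space itself, and then invoking Proposition \ref{p2}(iii).

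Concretely, given any $M$-comodule $(X,\alpha)$, I would first observe that by Proposition \ref{p2}(i) the saturation space $\mathrm{Sat}(X,\alpha)$ is an $M$-subcomodule of the canonical $M$-comodule $(X\ft M,\,\Id_X\ot\Delta)$. Consequently $(\mathrm{Sat}(X,\alpha),\,\Id_X\ot\Delta)$ is itself an $M$-comodule, and the assumption in the corollary applies to it: it must be non-degenerate.

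Next, I would feed this non-degeneracy into Proposition \ref{p2}(iii), which characterizes the non-degeneracy of $(\mathrm{Sat}(X,\alpha),\Id_X\ot\Delta)$ as being equivalent to $(X,\alpha)$ being both non-degenerate and saturated. This immediately yields that $(X,\alpha)$ is saturated, finishing the argument.

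There is essentially no obstacle: the work has already been done in Proposition \ref{p2}, so the corollary is a short bookkeeping step. The only point requiring any attention is the logical direction used in Proposition \ref{p2}(iii), i.e. that non-degeneracy of the saturation space forces saturation of the original comodule, which is precisely the implication proved via Proposition \ref{p1} and part (ii) of Proposition \ref{p2}.
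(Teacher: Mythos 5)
Your proof is correct and is essentially the paper's own argument: the paper states that the corollary follows immediately from Proposition \ref{p2}(iii), and your write-up just makes explicit the intermediate step of applying the non-degeneracy hypothesis to $(\mathrm{Sat}(X,\alpha),\Id_{X}\ot\Delta)$ viewed as an $M$-comodule via Proposition \ref{p2}(i).
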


\begin{remark} 
	Note that it does not necessarily follow from Corollary \ref{c1} or its proof that every non-degenerate $ M $-comodule is saturated.
\end{remark}
 
\begin{pro}\label{p8} For a Hopf-von Neumann algebra $ (M,\Delta) $ the following conditions are equivalent:
	\begin{itemize}	
		\item[(a)] Every $ M $-comodule is saturated;
		\item[(b)] For any  $ M $-comodule $ (X,\alpha) $, any $ M $-subcomodule $ Z $ of $ X $  and any $ x\in X $, the following implication holds: \[ \alpha(x)\in Z\ft M \implies x\in Z ;\]
		\item [(c)] For any  $ M $-comodule $ (X,\alpha) $ and any $ x\in X $, we have $ x\in \overline{M_{*}\cdot x}^{\mathrm{w}^{*}} $.
	\end{itemize}	
\end{pro}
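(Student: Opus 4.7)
The plan is to establish the chain of implications $ (a)\Rightarrow (b)\Rightarrow (c)\Rightarrow (a) $.

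For $ (a)\Rightarrow (b) $, given an $ M $-subcomodule $ Z $ of $ (X,\alpha) $, the restriction $ (Z,\alpha|_{Z}) $ is itself an $ M $-comodule and by (a) is saturated. Suppose $ x\in X $ satisfies $ \alpha(x)\in Z\ft M $; coassociativity gives
\[(\Id_{X}\ot\Delta)(\alpha(x))=(\alpha\ot\Id_{M})(\alpha(x)),\]
and reading this equality inside $ Z\ft M\ft M $ one recognises it as $ (\Id_{Z}\ot\Delta)(\alpha(x))=(\alpha|_{Z}\ot\Id_{M})(\alpha(x)) $, so $ \alpha(x)\in\mathrm{Sat}(Z,\alpha|_{Z})=\alpha(Z) $ by (a). Injectivity of $ \alpha $ then yields $ x\in Z $.

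For $ (b)\Rightarrow (c) $, fix an $ M $-comodule $ (X,\alpha) $ with $ X\sub B(H) $ and $ M\sub B(K) $, pick $ x\in X $, and set $ Z:=\overline{M_{*}\cdot x}^{\mathrm{w}^{*}} $. The identity $ \omega\cdot(\varphi\cdot x)=(\omega\varphi)\cdot x $ (a direct consequence of coassociativity) together with the w*-continuity of each map $ y\mapsto \omega\cdot y $ shows that $ Z $ is an $ M_{*} $-submodule of $ X $, hence an $ M $-subcomodule by Remark \ref{rem3}. Assuming (b), it then suffices to prove $ \alpha(x)\in Z\ft M $. Since $ \alpha(x)\in X\ft M $, the right slices $ (\omega\ot\Id_{B(K)})(\alpha(x)) $ automatically lie in $ M $, so only the left slices require attention. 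For $ \psi\in B(K)_{*} $ with $ \psi|_{M}=0 $, pairing $ (\Id_{X}\ot\psi)(\alpha(x))\in X $ against an arbitrary $ \omega\in B(H)_{*} $ gives
\[\omega\bigl((\Id_{X}\ot\psi)(\alpha(x))\bigr)=\psi\bigl((\omega\ot\Id_{B(K)})(\alpha(x))\bigr)=0,\]
whence $ (\Id_{X}\ot\psi)(\alpha(x))=0 $. Consequently the left slice $ (\Id_{X}\ot\phi)(\alpha(x)) $ depends only on $ \phi|_{M}\in M_{*} $ and equals $ (\phi|_{M})\cdot x\in M_{*}\cdot x\sub Z $, so $ \alpha(x)\in Z\ft M $ and (b) delivers $ x\in Z $.

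For $ (c)\Rightarrow (a) $, let $ (X,\alpha) $ be any $ M $-comodule. By Proposition \ref{p2}(i), $ (\mathrm{Sat}(X,\alpha),\Id_{X}\ot\Delta) $ is itself an $ M $-comodule, so (c) applies and each $ y\in\mathrm{Sat}(X,\alpha) $ is a w*-limit of elements of $ M_{*}\cdot y $ computed via the canonical action. But $ M_{*}\cdot y\sub\alpha(X) $ by Proposition \ref{p2}(ii), and $ \alpha(X) $ is w*-closed since $ \alpha $ is a w*-continuous complete isometry between dual operator spaces (its predual map is a quotient map, so $ \alpha $ is a w*-homeomorphism onto its image). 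Therefore $ y\in\alpha(X) $, which forces $ \mathrm{Sat}(X,\alpha)=\alpha(X) $, i.e. $ (X,\alpha) $ is saturated.

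The main obstacle is the verification $ \alpha(x)\in Z\ft M $ in $ (b)\Rightarrow (c) $. This hinges on the Fubini-type fact that a left slice of an element of $ X\ft M $ by a normal functional on $ B(K) $ is completely determined by the restriction of that functional to $ M $; once this is in hand, every left slice of $ \alpha(x) $ is realised as an $ M_{*} $-action on $ x $ and the desired membership follows.
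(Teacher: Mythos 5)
Your proof is correct and follows essentially the same route as the paper: the same cycle $(a)\Rightarrow(b)\Rightarrow(c)\Rightarrow(a)$, with the same key constructions (restricting the action to the subcomodule $Z$, taking $Z=\overline{M_{*}\cdot x}^{\mathrm{w}^{*}}$, and applying (c) to $(\mathrm{Sat}(Y,\beta),\Id_{Y}\ot\Delta)$ via Proposition \ref{p2}(ii)). The only difference is that you spell out two points the paper leaves implicit — why membership of $\alpha(x)$ in $Z\ft M$ reduces to slices by functionals in $M_{*}$, and why $\alpha(X)$ is w*-closed — both of which are correctly handled.
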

\begin{proof} (a)$ \implies $(b): Suppose that every $ M $-comodule is saturated. Let $ (X,\alpha) $ be an $ M $-comodule, $ Z $ an $ M $-subcomodule of $ X $ and $ x\in X $ such that $ \alpha(x)\in Z\ft M $. By assumption, $ \alpha $ restricts to an $ M $-action on $ Z $ and since $ \alpha(x)\in Z\ft M $ and $ (\alpha\ot \mathrm{id})(\alpha(x))=(\mathrm{id}\ot \Delta)(\alpha(x)) $ it follows that $ \alpha(x)\in \mathrm{Sat}(Z,\alpha|_{Z}) $. But $ (Z,\alpha|_{Z}) $ is saturated by hypothesis and therefore $ \alpha(x)\in \alpha(Z) $. Thus, $ x\in Z $, because $ \alpha $ is an isometry.
	
	(b)$ \implies $(c): Let $ (X,\alpha) $ be an $ M $-comodule  and $ x\in X $ and put $ Z:=\overline{M_{*}\cdot x}^{\text{w*}}  $. Then, by Remark \ref{rem3}, it follows that $ Z $ is an $ M $-subcomodule of $ X $ since it is an $ M_{*} $-module by definition.
	
	Also, we have that $ \alpha(x)\in Z\ft M $. Indeed, by the definition of the Fubini tensor product, the condition $ \alpha(x)\in Z\ft M $ is equivalent to the following \[\omega\cdot x=(\mathrm{id}\ot \omega)(\alpha(x))\in Z,\quad\forall\omega\in M_{*}, \]
	which is true by the definition of $ Z $. Therefore, the assumption that (b) holds implies that $ x\in Z $, that is $ x\in \overline{M_{*}\cdot x}^{\text{w*}} $.
	
	(c)$ \implies $(a): Suppose that (c) is true and take an $ M $-comodule $ (Y,\beta) $. Consider the $ M $-comodule $ (X,\alpha) $ with $ X:=\mathrm{Sat}(Y,\beta) $ and $ \alpha=\mathrm{id}_{Y}\ot \Delta $. Then, by (c), it follows that $ z\in \overline{M_{*}\cdot z}^{\text{w*}} \sub \overline{M_{*}\cdot \mathrm{Sat}(Y,\beta)}^{\text{w*}} $, for all $ z\in \mathrm{Sat}(Y,\beta) $. But from Proposition \ref{p2} (ii), we have that $ M_{*}\cdot \mathrm{Sat}(Y,\beta)\sub \beta(Y) $ and therefore $ z\in \beta(Y) $, for all $ z\in \mathrm{Sat}(Y,\beta) $, that is $ (Y,\beta) $ is saturated.	
\end{proof}

The next two lemmas describe two basic ways of constructing new saturated comodules. 

\begin{lem}\label{l1} Let $ (M,\Delta) $ be a Hopf-von Neumann algebra and $ (Y,\beta) $ be a saturated $ M $-comodule. Then, $ (X\ft Y, \Id_{X}\ot\beta) $ is a saturated $ M $-comodule for any dual operator space $ X $.	 
\end{lem}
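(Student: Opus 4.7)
My approach is to unwind the definition of the saturation space for $(X\ft Y,\Id_X\ot\beta)$ and reduce to the saturation hypothesis on $(Y,\beta)$ via a slice-map argument. The $M$-comodule structure on $X\ft Y$ is given by $\Id_X\ot\beta$ (as noted after Remark \ref{rem1}), and after identifying $(X\ft Y)\ft M$ with $X\ft Y\ft M$ by associativity of the Fubini product, an element $z$ lies in $\mathrm{Sat}(X\ft Y,\Id_X\ot\beta)$ iff
\[(\Id_X\ot\Id_Y\ot\Delta)(z)=(\Id_X\ot\beta\ot\Id_M)(z).\]
For each $\omega\in X_*$, setting $z_\omega:=(\omega\ot\Id_{Y\ft M})(z)\in Y\ft M$ and slicing the above equation on the first tensor factor by $\omega\ot\Id_{Y\ft M\ft M}$ — using that w*-continuous slice maps commute with completely bounded maps on the remaining factors — yields
\[(\Id_Y\ot\Delta)(z_\omega)=(\beta\ot\Id_M)(z_\omega),\]
so $z_\omega\in\mathrm{Sat}(Y,\beta)$. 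By the saturation hypothesis on $(Y,\beta)$, I obtain $z_\omega\in\beta(Y)$ for every $\omega\in X_*$.

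To promote this slice-wise information to membership of $z$ in $(\Id_X\ot\beta)(X\ft Y)$, I will invoke the slice characterization of the Fubini tensor product: for any w*-closed subspace $Z$ of $Y\ft M$,
\[X\ft Z=\bigl\{w\in X\ft(Y\ft M):\ (\omega\ot\Id_{Y\ft M})(w)\in Z\text{ for all }\omega\in X_*\bigr\}.\]
This identity follows from the definition of the Fubini tensor product combined with the observation that, for $w\in X\vt B(K)$, the slice $(\omega'\ot\Id)(w)$ depends only on the restriction of $\omega'\in B(H_X)_*$ to $X$. Applying it with $Z=\beta(Y)$ gives $z\in X\ft\beta(Y)$. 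Finally, since $\beta\colon Y\to\beta(Y)$ is a w*-bicontinuous complete isometric isomorphism between dual operator spaces, tensoring with $\Id_X$ produces a w*-bicontinuous complete isometric isomorphism $\Id_X\ot\beta\colon X\ft Y\to X\ft\beta(Y)$ (with inverse $\Id_X\ot\beta^{-1}$), hence $z\in(\Id_X\ot\beta)(X\ft Y)$, as required. The main technical ingredient is the slice characterization just cited; everything else is a formal manipulation of Fubini products together with the coassociativity inherent in $\beta$.
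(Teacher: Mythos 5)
Your proposal is correct and follows essentially the same route as the paper: slice the saturation identity by functionals $\omega\in X_{*}$ to land in $\mathrm{Sat}(Y,\beta)=\beta(Y)$, then conclude $z\in X\ft\beta(Y)=(\Id_{X}\ot\beta)(X\ft Y)$. You merely spell out in more detail the slice characterization of $X\ft\beta(Y)$ and the identification $(\Id_{X}\ot\beta)(X\ft Y)=X\ft\beta(Y)$, which the paper's proof uses implicitly.
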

\begin{proof} Let $ X $ be a dual operator space. First we have to check that $ (X\ft Y, \Id_{X}\ot\beta) $ is an $ M $-comodule. Indeed, we have:
	\begin{align*} 
	    (\Id_{X}\ot\beta\ot\Id_{M})\circ(\Id_{X}\ot\beta)&=\Id_{X}\ot\left[ (\beta\ot \Id_{M})\circ\beta\right]\\
	    &=\Id_{X}\ot\left[ (\Id_{Y}\ot\Delta)\circ\beta\right]\\
	    &=(\Id_{X}\ot\Id_{Y}\ot\Delta)\circ(\Id_{X}\ot\beta).	    
	\end{align*}
	
Take $ z\in \mathrm{Sat}(X\ft Y,\Id_{X}\ot\beta) $. We have to prove that $ z\in(\Id_{X}\ot\beta)(X\ft Y)=X\ft\beta(Y) $. Indeed, since $ z\in \mathrm{Sat}(X\ft Y,\Id_{X}\ot\beta) $ we have:
	\[(\Id_{X}\ot\Id_{Y}\ot\Delta)(z)=(\Id_{X}\ot\beta\ot\Id_{M})(z).\]
	Therefore, for any $ \omega\in X_{*} $, we get:
	\begin{equation*} 
	(\omega\ot\Id_{Y\ft M\ft M})\circ(\Id_{X}\ot\Id_{Y}\ot\Delta)(z)=(\omega\ot\Id_{Y\ft M\ft M})\circ(\Id_{X}\ot\beta\ot\Id_{M})(z)
	\end{equation*}
	that is 
	\begin{equation*} 
	(\Id_{Y}\ot\Delta)\circ(\omega\ot\Id_{Y\ft M})(z)=(\beta\ot\Id_{M})\circ(\omega\ot\Id_{Y\ft M})(z).
	\end{equation*}
	Thus $ (\omega\ot\Id_{Y\ft M})(z)\in \mathrm{Sat}(Y,\beta)=\beta(Y) $ for all 	$ \omega\in X_{*} $ and hence $ z\in X\ft\beta(Y) $.	
\end{proof}

\begin{lem}\label{l2} Let $ M_{1} $ and $ M_{2} $ be two Hopf-von Neumann algebras and let  $ \alpha_{1} $ and $ \alpha_{2} $ be actions of $ M_{1} $ and $ M_{2} $ respectively on the same dual operator space $ X $. Suppose that $ (X,\alpha_{2}) $ is a saturated $ M_{2} $-comodule and that $ \alpha_{1} $ and $ \alpha_{2} $ commute, i.e. \[(\alpha_{1}\otimes \mathrm{id }_{M_{2}})\circ\alpha_{2}=(\mathrm{id }_{X}\otimes\sigma)\circ(\alpha_{2}\otimes \mathrm{id }_{M_{1}})\circ\alpha_{1}, \]
	where $ \sigma\colon M_{2}\vt M_{1}\to M_{1}\vt M_{2}:\ x\otimes y\mapsto y\otimes x $ is the flip isomorphism. Then, the fixed point space $ (X^{\alpha_{1}},\alpha_{2}|_{X^{\alpha_{1}}}) $ is a saturated $ M_{2} $-comodule.	
\end{lem}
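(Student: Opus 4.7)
The plan is to unwind the definition of saturation for $X^{\alpha_{1}}$ and reduce it to the assumed saturation of $(X,\alpha_{2})$; the commutativity of the two actions then transfers the $\alpha_{1}$-invariance living on the slices of a saturation element back to invariance of the element itself. By Lemma \ref{lem2.3} the restriction $\alpha_{2}|_{X^{\alpha_{1}}}$ is indeed an $M_{2}$-action, so $\mathrm{Sat}(X^{\alpha_{1}},\alpha_{2}|_{X^{\alpha_{1}}})$ is well-defined. Pick $y\in \mathrm{Sat}(X^{\alpha_{1}},\alpha_{2}|_{X^{\alpha_{1}}})$. Since $X^{\alpha_{1}}\ft M_{2}\sub X\ft M_{2}$ and the saturation equation for the subcomodule is literally the one defining $\mathrm{Sat}(X,\alpha_{2})$, the hypothesis gives $y\in \mathrm{Sat}(X,\alpha_{2})=\alpha_{2}(X)$, so $y=\alpha_{2}(x)$ for a unique $x\in X$. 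Everything now reduces to showing $x\in X^{\alpha_{1}}$.

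A preliminary slice-map argument shows that an element $z\in X\ft M_{2}$ belongs to $X^{\alpha_{1}}\ft M_{2}$ if and only if
\[(\alpha_{1}\ot \Id_{M_{2}})(z)\ =\ \tau(z),\]
where $\tau\colon X\ft M_{2}\to X\ft M_{1}\ft M_{2}$ denotes the insertion of $1_{M_{1}}$ in the middle leg (both directions follow by slicing against functionals in $B(K_{2})_{*}$). Apply this to $y=\alpha_{2}(x)$ and rewrite the left-hand side using the commutation relation of Definition \ref{def2.3} to get
\[(\Id_{X}\ot\sigma)\circ(\alpha_{2}\ot \Id_{M_{1}})\circ\alpha_{1}(x)\ =\ \tau(\alpha_{2}(x)).\]
Slicing both sides against $\Id_{X}\ot\rho\ot\omega$ for arbitrary $\rho\in (M_{1})_{*}$ and $\omega\in (M_{2})_{*}$, and using that $(\rho\ot\omega)\circ\sigma=\omega\ot\rho$, one arrives after a short rearrangement at
\[(\Id_{X}\ot\omega)\circ\alpha_{2}\bigl((\Id_{X}\ot\rho)\alpha_{1}(x)-\rho(1_{M_{1}})x\bigr)=0,\qquad \forall\omega\in (M_{2})_{*}.\]

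Because $\alpha_{2}$ is a complete isometry, and because the slice maps by $(M_{2})_{*}$ separate the points of $X\ft M_{2}$ (the Fubini slices by $B(K_{2})_{*}$-functionals restrict to $(M_{2})_{*}$ on $X\ft M_{2}$), the displayed identity forces $(\Id_{X}\ot\rho)\alpha_{1}(x)=\rho(1_{M_{1}})x$ for every $\rho\in (M_{1})_{*}$. Since $(M_{1})_{*}$ separates the points of $M_{1}$, this yields $\alpha_{1}(x)=x\ot 1_{M_{1}}$, i.e.\ $x\in X^{\alpha_{1}}$; hence $y=\alpha_{2}|_{X^{\alpha_{1}}}(x)\in\alpha_{2}|_{X^{\alpha_{1}}}(X^{\alpha_{1}})$, establishing saturation. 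The main obstacle I expect is the careful tensor-leg bookkeeping around the twist $\sigma$ and the verification of the slice-map characterisation of $X^{\alpha_{1}}\ft M_{2}$; both, however, are essentially formal once set up correctly.
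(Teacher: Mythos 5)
Your proposal is correct and follows essentially the same route as the paper: both identify $\mathrm{Sat}(X^{\alpha_{1}},\alpha_{2}|_{X^{\alpha_{1}}})$ inside $\mathrm{Sat}(X,\alpha_{2})=\alpha_{2}(X)$, write $y=\alpha_{2}(x)$, characterise membership of $y$ in $X^{\alpha_{1}}\ft M_{2}$ by triviality of $\alpha_{1}\ot\Id_{M_{2}}$ on it, and then invoke the commutation relation to transfer this to $\alpha_{1}(x)=x\ot 1$. The only cosmetic difference is the last cancellation: the paper cancels the isometry $\alpha_{2}\ot\Id_{M_{1}}$ directly, while you slice against $(M_{1})_{*}\ot(M_{2})_{*}$ and cancel $\alpha_{2}$ plus separation of points, which amounts to the same thing.
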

\begin{proof} Since the actions $ \alpha_{1} $ and $ \alpha_{2} $ commute, $ X^{\alpha_{1}} $ is an $ M_{2} $-subcomodule of $ (X,\alpha_{2}) $ by Lemma \ref{lem2.3}. Also, since $ (X,\alpha_{2}) $ is saturated we have $ \mathrm{Sat}(X,\alpha_{2})=\alpha_{2}(X) $ and therefore
	\begin{align*} 
	\mathrm{Sat}(X^{\alpha_{1}},\alpha_{2}|_{X^{\alpha_{1}}})&=(X^{\alpha_{1}}\ft M)\cap \mathrm{Sat}(X,\alpha_{2})\\
	&=(X^{\alpha_{1}}\ft M)\cap \alpha_{2}(X).
	\end{align*}	
	Thus it suffices to show that $ (X^{\alpha_{1}}\ft M_{2})\cap \alpha_{2}(X)\sub\alpha_{2}(X^{\alpha_{1}}) $.
	
	 Take $ y\in (X^{\alpha_{1}}\ft M_{2})\cap \alpha_{2}(X) $. Then $ y=\alpha_{2}(x) $ for some $ x\in X$ and so we only need to prove that $ x\in X^{\alpha_{1}} $, i.e. $ \alpha_{1}(x)=x\ot1 $.	
	 Indeed, since $ y\in X^{\alpha_{1}}\ft M_{2} $ it follows that
	\[(\Id_{X}\ot\sigma)\circ(\alpha_{1}\ot\Id_{M_{2}})(y)=y\ot 1 \]
	and therefore
	\begin{align*} 
	(\alpha_{2}\ot\Id_{M_{1}})(x\ot1)&=\alpha_{2}(x)\ot1\\
	&=y\ot1\\&=(\Id_{X}\ot\sigma)\circ(\alpha_{1}\ot\Id_{M_{2}})(y)\\&=(\Id_{X}\ot\sigma)\circ(\alpha_{1}\ot\Id_{M_{2}})(\alpha_{2}(x))\\&=(\alpha_{2}\ot\Id_{M_{1}})(\alpha_{1}(x)),
	\end{align*}
	where the last equality follows from the commutativity of the actions $ \alpha_{1} $ and $ \alpha_{2} $. Since $ \alpha_{2}\ot\Id_{M_{1}} $ is an isometry it follows that $ \alpha_{1}(x)=x\ot 1 $ and the proof is complete.	
\end{proof}

\section{Crossed products and the approximation property}\label{sec4}

In this section we consider the (dual) Hopf-von Neumann algebras $ (\LI,\alpha_{G}) $ and $ (L(G),\delta_{G}) $ and we study the basic properties of their comodules and the associated crossed products.

\subsection{Crossed products of $ \LI $-comodules}

Perhaps the most important property of $ (\LI,\alpha_{G}) $ is that every $ \LI $-comodule is non-degenerate and saturated (see Lemma \ref{c2} below). This is the key ingredient in the proofs of some of the main results in the following (see e.g. Proposition \ref{p10}, Theorem \ref{thm1} and Propositions \ref{p14} and \ref{p16}).

Before we proceed to the study of crossed products of $ \LI $-comodules, let us recall some known results from the theory of crossed products of von Neumann algebras.

Let $ M $ be a von Neumann algebra and let $ \gamma\colon G\to\mathrm{Aut}(M) $ be a (pointwise) action of $ G $ on $ M $, i.e. a group homomorphism from $ G $ to the group of unital normal *-automorphisms of $ M $ such that the function $$ G\ni s\mapsto\gamma_{s}(x)\in M $$ is w*-continuous for any $ x\in M $. Then, we get a W*-$ \LI $-action $ \alpha\colon M\to M\vt\LI $ given by  \[\la\alpha(x),\omega\ot f\ra=\int_{G}\la\gamma_{s^{-1}}(x),\omega\ra f(s)\ ds,\quad x\in M,\ \omega\in M_{*}, f\in \LO. \]
The map $ \alpha $ is clearly a w*-continuous unital *-monomorphism (and thus completely isometric) and the fact that $ \gamma_{s}\circ\gamma_{t} $ for all $ s,t\in G $ yields that 
\[(\alpha\ot\Id)\circ\alpha=(\Id\ot\alpha_{G})\circ\alpha \]
and therefore $ \alpha $ is indeed a W*-$ \LI $-action. In fact, every W*-$ \LI $-action on $ M $ arises this way (see for example \cite[18.6]{Stra}). Furthermore, it is easy to verify that the fixed points of the action $ \gamma $ are exactly the fixed point subspace $ M^{\alpha} $, that is, for an $ x\in M $, we have that $ \alpha(x)=x\ot 1 $ if and only if $ \gamma_{s}(x)=x $ for all $ s\in G $.

The crossed product $ M\rtimes_{\alpha}G $ (or $ M\rtimes_{\gamma}G $) is defined as the von Neumann subalgebra of $ M\vt\BLT $ generated by $ \alpha(M) $ and $ \mathbb{C}1\vt L(G) $.

According to the Digernes-Takesaki theorem (see for example \cite[Chapter X, Corollary 1.22]{Tak}) we have that $ M\rtimes_{\alpha}G $ is equal to the fixed point algebra of the action $ \beta $ of $ G $ on $ M\vt\BLT $ defined by
\[\beta_{s}=\gamma_{s}\ot\Ad\rho_{s},\quad s\in G. \]

On the other hand, if $ \alpha $ is the W*-$ \LI $-action associated to the action $ \gamma $, then one can verify (see also \cite{NaTa} page 9) that the W*-$ \LI $-action  corresponding to the pointwise action $ \beta $, which we denote by $ \wt{\alpha} $, is given directly by $ \alpha $ via the formula:
\[\widetilde{\alpha}=(\mathrm{id }_{M}\otimes \mathrm{Ad }U_{G}^{*})\circ(\mathrm{id }_{M}\otimes\sigma)\circ(\alpha\otimes \mathrm{id }_{B(L^{2}(G))}) \] 	where $ \sigma $ is the flip isomorphism on $\BLT\vt\BLT$. Therefore, the Digernes-Takesaki theorem is rephrased as \[M\rtimes_{\alpha}G=\left( M\vt\BLT\right)^{\wt{\alpha}}.  \]

Taking into consideration all of the above, Hamana \cite{Ha1} suggested the Definitions \ref{deffcr} and \ref{def4.2} below. 

\begin{defin}\label{deffcr}
	 For an $ \LI $-comodule $ (X,\alpha) $, we define the map \[ \widetilde{\alpha} \colon X \vt B(L^{2}(G)) \to X \vt B(L^{2}(G)) \vt L^{\ap}(G) \] by
		\[\widetilde{\alpha}=(\mathrm{id }_{X}\otimes \mathrm{Ad }U_{G}^{*})\circ(\mathrm{id }_{X}\otimes\sigma)\circ(\alpha\otimes \mathrm{id }_{B(L^{2}(G))}), \]
		where $ \sigma $ is the flip isomorphism on $\BLT\vt\BLT$.	
\end{defin}

The next result is essentially the same as \cite[Lemma 5.3 (i)]{Ha1} with the appropriate modifications since Hamana considers on $ \LI $ the (opposite) comultiplication $ \sigma\circ\alpha_{G} $ and uses the right group von Neumann algebra $ R(G) $ instead of $ L(G) $ as the dual of $ \LI $. Therefore we omit its proof. 

\begin{pro} \label{pro4.2i} Let $ (X,\alpha) $ be an $ \LI $-comodule. Then, $ \widetilde{\alpha} $ is an $ \LI $-action on $ X \vt B(L^{2}(G)) $, which commutes with the $ L(G) $-action $ \mathrm{id }_{X}\otimes\delta_{G} $.	
\end{pro}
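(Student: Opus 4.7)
The plan is to verify three things separately: that $\wt{\alpha}$ lands in the correct Fubini tensor product and is a w*-continuous complete isometry, that it satisfies the comodule coassociativity relation, and that it commutes with $\Id_{X}\ot \delta_{G}$ in the sense of Definition \ref{def2.3}.

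For the range and regularity, I would first observe that $\alpha(X)\sub X\ft \LI=X\vt\LI$, where the equality uses injectivity of $\LI$ (type I), so $\alpha\ot \Id_{\BLT}$ has image in $X\vt\LI\vt\BLT$. Applying $\Id_{X}\ot\sigma$ yields image in $X\vt\BLT\vt\LI$, and since $U_{G}\in R(G)\vt\LI\sub\BLT\vt\LI$, the conjugation $\Id_{X}\ot\Ad U_{G}^{*}$ preserves this space. A further application of $X\vt\BLT\vt\LI=(X\vt\BLT)\ft\LI$ (again by injectivity of $\LI$) places the output in the required Fubini product. The composition is a w*-continuous complete isometry because each of its three constituents has these properties (tensoring with identities, the flip $\sigma$, and conjugation by the unitary $1\ot U_{G}^{*}$).

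For the coassociativity
\[
(\wt{\alpha}\ot\Id_{\LI})\circ\wt{\alpha}=(\Id_{X\vt\BLT}\ot \alpha_{G})\circ\wt{\alpha},
\]
I would unpack both sides in leg notation on $X\vt\BLT\vt\LI\vt\LI$ using the factorization $\wt\alpha(x\ot y)=(1_{X}\ot U_{G}^{*})\bigl[(\Id_{X}\ot\sigma)(\alpha(x)\ot y)\bigr](1_{X}\ot U_{G})$. The computation reduces, on the $X$-leg, to the coassociativity of $\alpha$, i.e.\ $(\alpha\ot \Id_{\LI})\circ\alpha=(\Id_{X}\ot\alpha_{G})\circ\alpha$, and, on the remaining $\BLT\vt\LI\vt\LI$ legs, to the pentagon-type identity satisfied by $U_{G}$ that is obtained by applying $\alpha_{G}\ot\Id$ and $\Id\ot\alpha_{G}$ to the relation $\sigma\circ\alpha_{G}(f)=U_{G}(f\ot 1)U_{G}^{*}$ and exploiting coassociativity of $\alpha_{G}$. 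After carefully tracking the flips, both iterated expressions collapse to the same element.

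For the commutativity of $\wt\alpha$ with $\Id_{X}\ot \delta_{G}$, the key point is that $\delta_{G}$ acts only on the $\BLT$-factor of $X\vt\BLT$ and, similarly, $\wt\alpha$ acts on the first leg of $X$ through $\alpha$ and on the second leg of $\BLT$ through conjugation by $U_{G}^{*}$ together with the multiplication by the $\LI$-leg of $\alpha(x)$. Because $\delta_{G}$ leaves the first $X$-factor untouched, verifying the commutativity relation reduces, after plugging in $\delta_{G}(y)=W_{G}^{*}(y\ot 1)W_{G}$ for $y$ in the $\BLT$-slot, to the standard fact that $\beta_{G}$ and $\delta_{G}$ commute on $\BLT$ (Definition \ref{def2.3}), which is an immediate consequence of the well-known commutation relation between the multiplicative unitaries $U_{G}\in R(G)\vt\LI$ and $W_{G}\in\LI\vt L(G)$.

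The main obstacle is the leg-notation pentagon identity for $U_{G}$ used in the coassociativity step; every remaining piece of the argument (the identification of Fubini and spatial tensor products with an injective leg, the w*-continuous complete isometry property, and the commutation step) is formal once that identity is in hand, and the identity itself is a direct consequence of the coassociativity of $\alpha_{G}$ combined with the defining relation $\sigma\circ\alpha_{G}(f)=U_{G}(f\ot 1)U_{G}^{*}$.
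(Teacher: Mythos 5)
The paper does not actually prove this proposition: it is stated with the remark that it is essentially \cite[Lemma 5.3 (i)]{Ha1} (modulo Hamana's opposite conventions) and the proof is omitted, so there is no in-paper argument to match yours against. Your outline is sound, and it is the natural direct analogue of what the paper does write out for the dual statement, Proposition \ref{pro4.2}; the organization differs, though. There, the paper first embeds the comodule into a canonical W*-comodule $B(H)\vt M$ via Remark \ref{rem1}, so that the extended map is a normal *-monomorphism and the action identity can be checked on generators of the ambient von Neumann algebra, and only afterwards verifies that the subspace is preserved. You instead keep a general $X$ throughout and check the identities on elementary tensors by leg computations; this is equally valid (all maps involved are w*-continuous, so elementary tensors suffice), at the cost of having to invoke the bimodule property of $\alpha\ot\Id$ from Remark \ref{rem5} by hand where the paper would simply use multiplicativity. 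Your identification of the key inputs is correct: the range statement via injectivity of $\LI$, coassociativity of $\alpha$ on the $X$-leg plus a unitary identity for $U_{G}$ on the remaining legs, and, for the last step, the reduction of the commutation of $\wt{\alpha}$ with $\Id_{X}\ot\delta_{G}$ to the commutation of $\beta_{G}$ with $\delta_{G}$ on $\BLT$, which works precisely because $\delta_{G}$ leaves both the $X$-leg and the $\LI$-leg produced by $\alpha$ untouched.

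One caution on the step you single out as the main obstacle: the pentagon-type identity for $U_{G}$ is not a formal consequence of coassociativity of $\alpha_{G}$ together with $\sigma\circ\alpha_{G}(f)=U_{G}(f\ot 1)U_{G}^{*}$. Those relations only say that two unitaries implement the same conjugation on $\LI\ot 1\ot 1$, hence agree up to a unitary in the commutant $\LI\vt\BLT\vt\BLT$, and in your coassociativity computation the unitaries conjugate elements whose second leg is a general operator of $\BLT$, so this residual ambiguity is not harmless. You should instead verify the exact unitary identity by the short pointwise computation on $L^{2}(G\times G\times G)$ --- exactly as the paper does for the analogous identity $A^{*}B\in\mathbb{C}1\vt\mathbb{C}1\vt\BLT\vt\BLT$ in the proof of Proposition \ref{pro4.2}. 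The same remark applies to the commutation relation between $U_{G}$ and $W_{G}$ that you invoke for the final step; it is true, but it is a computation, not a corollary of the module memberships $U_{G}\in R(G)\vt\LI$ and $W_{G}\in\LI\vt L(G)$ alone.
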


\begin{defin}\label{def4.2}
	Let $ (X,\alpha) $ be an $ \LI $-comodule. The \bfc{Fubini crossed product} of $ X $ by $ \alpha $ is defined to be the $ L(G) $-comodule $ (X\rtimes^{\cl{F}}_{\alpha}G,\ \widehat{\alpha}) $, where
	\[X\rtimes^{\cl{F}}_{\alpha}G:=(X\vt B(L^{2}(G)))^{\widetilde{\alpha}} \]
	and \[\widehat{\alpha}:=(\mathrm{id }_{X}\otimes\delta_{G})|_{X\fcr_{\alpha}G}. \]
	The $ L(G) $-action $ \widehat{\alpha}\colon X\fcr_{\alpha}G\to (X\fcr_{\alpha}G)\ft L(G) $ is called the \bfc{dual action} of $ \alpha $. 
\end{defin}

By Proposition \ref{pro4.2i} and Lemma \ref{lem2.3} we get that $ (X\fcr_{\alpha}G,\wh{\alpha}) $ is indeed an $ L(G) $-subcomodule of $ (X\vt\BLT,\Id_{X}\ot\delta_{G}) $.

\begin{defin}\label{def4.4}
	Let $ (X,\alpha) $ be an $ \LI $-comodule and suppose that $ X $ is a w*-closed subspace of $ B(H) $ for some Hilbert space $ H $. The \bfc{spatial crossed product} of $ X $ by $ \alpha $ is defined to be the space
	\begin{align*}
	X\scr_{\alpha}G&:=\wsp\{(1_{H}\ot\lambda_{s})\alpha(x)(1_{H}\ot\lambda_{t}):\ s,t\in G,\ x\in X \}\\&\sub B(H)\vt\BLT.
	\end{align*}
\end{defin}

Note that Definition \ref{def4.4} is naturally dictated by the fact that if $ M $ is a von Neumann algebra, $ \gamma $ is a $ G $-action on $ M $ and $\alpha$ is the W*-$ \LI $-action on $ M $ corresponding to $ \gamma $ as above, then the crossed product $ M\rtimes_{\alpha}G $ is equal to the normal $ \mathbb{C}1\vt L(G) $-bimodule generated by $ \alpha(M) $. This follows immediately from the well known covariance relations:
\[\alpha(\gamma_{s}(x))=(1\ot\lambda_{s})\alpha(x)(1\ot\lambda_{s}),\quad s\in G,\ x\in M. \]

\begin{remark} \label{rem9} From the discussion above, it follows that if $ (M,\alpha) $ is a W*-$ \LI $-comodule, then $ M\fcr_{\alpha}G=M\scr_{\alpha}G=M\rtimes_{\alpha}G $, where $ M\rtimes_{\alpha}G $ is the usual von Neumann algebra crossed product. Interestingly, we will prove later that this is not true in general for arbitrary $ \LI $-comodules unless $ G $ has the approximation property of Haagerup and Kraus (see Theorem \ref{thm4}).
	
Note that if $ (X,\alpha) $ is an $ \LI $-comodule with $ \alpha $ trivial, that is $ \alpha(x)=x\ot1 $ for all $x\in X$, then for $ x\in X $ and $ b\in \BLT $ we have
\begin{align*} 
   \wt{\alpha}(x\ot b)=&\ (\mathrm{id }_{X}\otimes \mathrm{Ad }U_{G}^{*})\circ(\mathrm{id }_{X}\otimes\sigma)\circ(\alpha\otimes \mathrm{id }_{B(L^{2}(G))})(x\ot b)\\
   =&\ (\mathrm{id }_{X}\otimes \mathrm{Ad }U_{G}^{*})\circ(\mathrm{id }_{X}\otimes\sigma)(x\ot 1\ot b)\\
   =&\ (\mathrm{id }_{X}\otimes \mathrm{Ad }U_{G}^{*})(x\ot b\ot 1)\\
   =&\ (\Id_{X}\ot\beta_{G})(x\ot b)
\end{align*}	
	and thus $ \wt{\alpha}=\Id_{X}\ot\beta_{G} $. Since $ \BLT^{\beta_{G}}=L(G) $ it follows that
	\begin{align*} 
	   X\fcr_{\alpha}G=&\ (X\vt\BLT)^{\wt{\alpha}}\\
	   =&\ (X\vt\BLT)^{\Id_{X}\ot\beta_{G}}\\
	   =&\ X\ft (\BLT)^{\beta_{G}}\\
	   =&\ X\ft L(G).
	\end{align*}
This actually explains the term `Fubini crossed product' which was first used in \cite{UZ}. We should note here that Hamana had already considered the notion of Fubini crossed products in \cite{Ha1} but he did not use the same term.

On the other hand, it is obvious that $ X\scr_{\alpha}G=X\vt L(G) $ when $ \alpha $ is trivial and thus the term 'spatial crossed product' is similarly justified.

Also, for a locally compact group (even in the discrete case) it is not necessarily true that $ X\vt L(G)= X\ft L(G) $ for any dual operator space $ X $. Indeed, if we take $ G $ to be any discrete group failing the approximation property (for example $G=SL(3,\mathbb{Z}) $, see \cite{LdS}), then, by \cite[Theorem 2.1]{HK}, it follows that there is a dual operator space $ X $ such that $ X\vt L(G)\neq X\ft L(G) $. Therefore, in this case, the equality $ X\fcr_{\alpha}G=X\scr_{\alpha}G $ is not valid for all $ \LI $-comodules $ (X,\alpha) $ in contrast to the von Neumann algebra case. Thus the distinction between Fubini and spatial crossed products seems to be necessary in the setting of general dual operator spaces.
\end{remark}

It was shown by Crann and Neufang \cite{CN} that if $ G $ is a locally compact group with the AP, then $ X\fcr_{\alpha}G=X\scr_{\alpha}G $ for any $ \LI $-comodule $ (X,\alpha) $ \cite[Corollary 4.8]{CN}. We warn the reader that Crann and Neufang use a different definition of the Fubini crossed product in \cite{CN}, but one can easily check that it is equivalent to Definition \ref{def4.2}. Furthermore, they consider $ G $-invariant subspaces of von Neumann algebras instead of general $ \LI $-comodules, but this is not restrictive at all, because every $ \LI $-comodule is isomorphic to a subcomodule of a W*-$ \LI $-comodule (see Remark \ref{rem1}), that is a $ G $-invariant subspace of a von Neumann algebra since every W*-$ \LI $-action comes from a pointwise $ G $-action as pointed out above.

Later, using a generalized version of Takesaki-duality, we will give an alternative proof of the aforementioned result of Crann and Neufang. Moreover, we are going to prove that its converse is also true (see Theorem \ref{thm4}). 

\begin{remark}\label{rem5} Let $ H,\ K $ be Hilbert spaces, $ X\sub B(H) $ be a w*-closed subspace and $ b,\ c\in B(K) $. Then, we have
	\[(1_{H}\ot b)(X\vt B(K))(1_{H}\otimes c)\sub X\vt B(K). \]	
	As a consequence, if $ (X,\alpha) $ is an $ \LI $-comodule, then $$ X\scr_{\alpha}G\sub X\vt \BLT, $$ because $ \alpha(X)\sub X\vt \LI\sub X\vt\BLT $. 
	
	Also, if in addition $ Y $ is a w*-closed subspace of $ B(L) $ for some Hilbert space $ L $ and $ \phi\colon X\to Y $ is a w*-continuous completely bounded map, then $ \phi\ot \mathrm{id }_{B(K)}  \colon$ $ X \vt B(K) \to Y \vt B(K) $ is a w*-continuous $ B(K) $-bimodule map in the sense that
	\[(\phi\ot \mathrm{id }_{B(K)})((1_{H}\ot a)x(1_{H}\ot b))=(1_{L}\ot a)(\phi\ot \mathrm{id }_{B(K)})(x)(1_{L}\ot b), \]
	for all $ a,\ b\in B(K) $ and $ x\in X\vt B(K) $.
\end{remark} 

For the proofs of Propositions \ref{pro2.13} and \ref{pro2.14} below the reader is referred to \cite{DA}.
\begin{pro}\cite[Proposition 3.8]{DA}\label{pro2.13} Let $ (X,\alpha) $ be an $ \LI $-comodule and suppose that $ X $ is a w*-closed subspace of $ B(H) $ for some Hilbert space $ H $. Then, $ X\fcr_{\alpha}G $ is an $ L(G) $-bimodule, i.e.
	\[(1_{H}\ot\lambda_{s})y(1_{H}\ot\lambda_{t})\in X\fcr_{\alpha}G,\qquad s,t\in G,\ y\in X\fcr_{\alpha}G \]
	and \[\alpha(X)\sub X\fcr_{\alpha}G.\]
	Therefore, we have: 
	\[X\scr_{\alpha}G\sub X\fcr_{\alpha}G. \]
	Furthermore, $ \wh{\alpha}(X\scr_{\alpha}G)\sub (X\scr_{\alpha}G)\ft L(G) $, that is $ X\scr_{\alpha}G $ is an $ L(G) $-subcomodule of $ (X\fcr_{\alpha}G,\wh{\alpha}) $.	
\end{pro}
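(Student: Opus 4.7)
I would attack the four assertions in the listed order, with the bulk of the work a careful tensor-leg calculation.

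First, for $\alpha(X)\subseteq X\fcr_{\alpha}G$, I would simply check that every $\alpha(x)$ is a fixed point of $\wt{\alpha}$. Unpacking the definition gives
\[\wt{\alpha}(\alpha(x))=(\Id\ot\Ad U_G^{*})\circ(\Id\ot\sigma)\circ(\alpha\ot\Id)(\alpha(x)).\]
The coassociativity $(\alpha\ot\Id)\circ\alpha=(\Id\ot\alpha_G)\circ\alpha$ replaces the innermost piece with $(\Id\ot\alpha_G)(\alpha(x))$, and the identity $\sigma\circ\alpha_G(f)=U_G(f\ot 1)U_G^{*}$ recorded in Section \ref{sec2} collapses $(\Id\ot\Ad U_G^{*})\circ(\Id\ot(\sigma\circ\alpha_G))$ into $\Id\ot(\,\cdot\ot 1)$. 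Thus $\wt{\alpha}(\alpha(x))=\alpha(x)\ot 1$, as desired.

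Next, for the bimodule property of $X\fcr_{\alpha}G$, the key identity to establish is
\[\wt{\alpha}\bigl((1_H\ot\lambda_s)z(1_H\ot\lambda_t)\bigr)=(1_H\ot\lambda_s\ot 1)\,\wt{\alpha}(z)\,(1_H\ot\lambda_t\ot 1)\]
for every $z\in X\vt\BLT$. The map $\alpha\ot\Id$ is $(B(H)\vt\BLT)$-bimodule in the outer legs (the first and third after tensoring up), and $\Id\ot\sigma$ moves the $\lambda_s,\lambda_t$ from the third to the second leg. At the final step I would use that $U_G\in R(G)\vt\LI$, hence $U_G$ and $U_G^{*}$ commute with $\lambda_s\ot 1$; conjugating by $U_G^{*}$ in legs $(2,3)$ therefore leaves the $\lambda_s,\lambda_t$ factors in leg $2$ untouched. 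Combined with the fixed-point equation for $y\in X\fcr_{\alpha}G$, this yields $\wt{\alpha}((1_H\ot\lambda_s)y(1_H\ot\lambda_t))=((1_H\ot\lambda_s)y(1_H\ot\lambda_t))\ot 1$. The product lies in $X\vt\BLT$ by Remark \ref{rem5} and in the fixed point space by the above, hence in $X\fcr_{\alpha}G$. The inclusion $X\scr_{\alpha}G\subseteq X\fcr_{\alpha}G$ is then immediate since $X\fcr_{\alpha}G$ is w*-closed and contains all generators $(1_H\ot\lambda_s)\alpha(x)(1_H\ot\lambda_t)$.

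For the last assertion about $\wh{\alpha}$, I would compute directly on a generator. Using $\delta_G=\Ad W_G^{*}\circ(\,\cdot\ot 1)$ with $W_G\in\LI\vt L(G)$ and the commutativity of $\LI$, we get $\delta_G(f)=f\ot 1$ for every $f\in\LI$. Hence $(\Id\ot\delta_G)(\alpha(x))=\alpha(x)\ot 1$ since $\alpha(x)\in X\ft\LI$. Combining with $\delta_G(\lambda_s)=\lambda_s\ot\lambda_s$ and the fact that $\Id\ot\delta_G$ is a $B(H)$-bimodule normal $*$-homomorphism in the second leg,
\[\wh{\alpha}\bigl((1_H\ot\lambda_s)\alpha(x)(1_H\ot\lambda_t)\bigr)=(1_H\ot\lambda_s)\alpha(x)(1_H\ot\lambda_t)\ot\lambda_s\lambda_t,\]
which visibly lies in $(X\scr_{\alpha}G)\vt L(G)\subseteq(X\scr_{\alpha}G)\ft L(G)$. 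Taking w*-closed linear spans preserves the inclusion.

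The only real obstacle is bookkeeping: keeping track of which of the three tensor legs each piece of $\wt{\alpha}$ acts on and correctly invoking $[U_G,\lambda_s\ot 1]=0$ (from $U_G\in R(G)\vt\LI$) and $[W_G,f\ot 1]=0$ (from $W_G\in\LI\vt L(G)$) at the right moment. Once these two commutations are isolated, every remaining step is a direct manipulation of the defining formulas for $\alpha_G,\delta_G,V_G,U_G,W_G$ collected in Section \ref{sec2}.
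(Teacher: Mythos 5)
Your proof is correct, and it is the direct verification one expects (the paper itself does not reprove this proposition but defers to \cite[Proposition 3.8]{DA}): the three computations — $\wt{\alpha}(\alpha(x))=\alpha(x)\ot 1$ via coassociativity and $\sigma\circ\alpha_{G}=\mathrm{Ad}\,U_{G}\circ(\cdot\ot 1)$ on $\LI$, the covariance identity $\wt{\alpha}((1\ot\lambda_{s})z(1\ot\lambda_{t}))=(1\ot\lambda_{s}\ot1)\wt{\alpha}(z)(1\ot\lambda_{t}\ot1)$ from $U_{G}\in R(G)\vt\LI$, and $\wh{\alpha}$ on generators from $W_{G}\in\LI\vt L(G)$ — are exactly the right ingredients and are carried out correctly. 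The only points worth making explicit in a write-up are that $X\fcr_{\alpha}G$ is w*-closed (being the kernel of the w*-continuous map $z\mapsto\wt{\alpha}(z)-z\ot1$), which you use implicitly when passing to spans, and the routine normality/w*-density argument that upgrades the identity $(\mathrm{id}\ot\mathrm{Ad}\,U_{G}^{*})\circ(\mathrm{id}\ot\sigma)\circ(\mathrm{id}\ot\alpha_{G})(w)=w\ot1$ from elementary tensors to all of $X\ft\LI$.
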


The next result proves that, for any $ \LI $-comodule $ X $, both the Fubini crossed product and the spatial crossed product are unique up to comodule isomorphisms and thus independent of the Hilbert space on which $ X $ is represented.  	

\begin{pro}[Uniqueness of the crossed product]\cite[Proposition 3.9]{DA}\label{pro2.14} Let $ (X,\alpha) $ and $ (Y,\beta) $ be two $ \LI $-comodules and suppose that $ X $ and $ Y $ are w*-closed subspaces of $ B(H) $ and $ B(K) $ respectively. If there exists an $ \LI $-comodule isomorphism $ \Phi\colon X\to Y $, then the isomorphism $\Psi:= \Phi\ot \mathrm{id }_{\BLT}\colon X\vt\BLT\to Y\vt\BLT $ is an $ \LI $-comodule isomorphism from $ (X\vt\BLT,\wt{\alpha}) $ onto $ (Y\vt\BLT,\wt{\beta}) $, which maps $ X\fcr_{\alpha}G $ onto $ Y\fcr_{\beta}G $ and $ X\scr_{\alpha}G $ onto $ Y\scr_{\beta}G $. Also, $ \Psi|_{X\fcr_{\alpha}G} $ is an $ L(G) $-comodule isomorphism from $ (X\fcr_{\alpha}G,\wh{\alpha}) $ onto  $ (Y\fcr_{\beta}G,\wh{\beta})$ and $ \Psi|_{X\scr_{\alpha}G} $ is an $ L(G) $-comodule isomorphism from $ (X\scr_{\alpha}G,\wh{\alpha}) $ onto  $ (Y\scr_{\beta}G,\wh{\beta})$. Furthermore, $ \Psi $ is an $ L(G) $-bimodule map, i.e. $ \Psi((1_{H}\ot \lambda_{s})x(1_{H}\ot\lambda_{t}))=(1_{K}\ot \lambda_{s})\Psi(x)(1_{K}\ot\lambda_{t}) $, for all $ s,t\in G $ and $ x\in X\vt\BLT $.
\end{pro}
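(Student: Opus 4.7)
My plan is to first establish that $\Psi$ is a well-defined w*-bicontinuous complete isometry onto $Y\vt\BLT$, which follows from $\Phi$ being one and standard functoriality of tensoring with the identity on the injective factor $\BLT$ (so, in particular, $\Psi$ is also an $\BLT$-bimodule map in the sense of Remark \ref{rem5} with $b,c\in\BLT$). Then the entire proposition reduces to establishing a single intertwining identity between $\wt\alpha$ and $\wt\beta$, after which the statements about Fubini and spatial crossed products, the dual actions, and the bimodule property all fall out by elementary manipulations.

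The central step is to verify
\[
\wt\beta\circ\Psi=(\Psi\ot\Id_{\LI})\circ\wt\alpha
\]
as maps $X\vt\BLT\to Y\vt\BLT\vt\LI$. Using the definition of $\wt\alpha$ and $\wt\beta$ and the hypothesis $\beta\circ\Phi=(\Phi\ot\Id_{\LI})\circ\alpha$, I would compute
\begin{align*}
\wt\beta\circ\Psi&=(\Id_{Y}\ot\Ad U_{G}^{*})\circ(\Id_{Y}\ot\sigma)\circ(\beta\ot\Id_{\BLT})\circ(\Phi\ot\Id_{\BLT})\\
&=(\Id_{Y}\ot\Ad U_{G}^{*})\circ(\Id_{Y}\ot\sigma)\circ(\Phi\ot\Id_{\LI}\ot\Id_{\BLT})\circ(\alpha\ot\Id_{\BLT})\\
&=(\Phi\ot\Id_{\BLT}\ot\Id_{\LI})\circ(\Id_{X}\ot\Ad U_{G}^{*})\circ(\Id_{X}\ot\sigma)\circ(\alpha\ot\Id_{\BLT})\\
&=(\Psi\ot\Id_{\LI})\circ\wt\alpha,
\end{align*}
where in the third equality I pull $\Phi$ past the flip on the second and third legs and past $\Ad U_{G}^{*}$ acting on those legs (the point being that $\Phi$ acts only on the first leg and $\sigma$, $\Ad U_{G}^{*}$ act only on the other two). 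Applying this identity to fixed points yields $\Psi\bigl((X\vt\BLT)^{\wt\alpha}\bigr)\sub(Y\vt\BLT)^{\wt\beta}$, i.e.\ $\Psi(X\fcr_{\alpha}G)\sub Y\fcr_{\beta}G$; applying the same argument to $\Psi^{-1}=\Phi^{-1}\ot\Id_{\BLT}$ gives the reverse inclusion and hence the required $L(G)$-comodule isomorphism after checking (immediately from $\Psi=\Phi\ot\Id_{\BLT}$) that $\Psi$ intertwines $\Id_{X}\ot\delta_{G}$ and $\Id_{Y}\ot\delta_{G}$.

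For the spatial crossed product, combining the $\BLT$-bimodule property of $\Psi$ (Remark \ref{rem5}) with $\Psi\circ\alpha=\beta\circ\Phi$ gives
\[
\Psi\bigl((1_{H}\ot\lambda_{s})\alpha(x)(1_{H}\ot\lambda_{t})\bigr)=(1_{K}\ot\lambda_{s})\beta(\Phi(x))(1_{K}\ot\lambda_{t})
\]
for all $x\in X$ and $s,t\in G$. Since $\Phi(X)=Y$, the w*-closed span of the right-hand sides is exactly $Y\scr_{\beta}G$, and $\Psi$ is w*-bicontinuous, so $\Psi(X\scr_{\alpha}G)=Y\scr_{\beta}G$. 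The $L(G)$-comodule isomorphism statement for the spatial crossed products now follows by restricting the intertwining from the previous paragraph (noting $\wh\alpha$, $\wh\beta$ are restrictions of $\Id_{X}\ot\delta_{G}$, $\Id_{Y}\ot\delta_{G}$ to the respective subcomodules, by Proposition \ref{pro2.13}), and the final $L(G)$-bimodule identity is the $\BLT$-bimodule identity specialized to $b=\lambda_{s}$, $c=\lambda_{t}$.

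The only genuine obstacle is the bookkeeping in the intertwining calculation above: one must be careful that $\sigma$ and $\Ad U_{G}^{*}$ only touch the $\BLT\vt\BLT$ or $\BLT\vt\LI$ factors and not the $X$ or $Y$ leg, so that $\Phi$ (or $\Psi$) commutes past them. All remaining assertions are either automatic restrictions of this core identity or direct consequences of the functoriality of the spatial tensor product.
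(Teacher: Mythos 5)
Your proof is correct and follows essentially the same route the paper takes: the paper defers this proof to \cite[Proposition 3.9]{DA} but proves the dual analogue (Proposition \ref{pro2.14i}) by exactly your argument — the leg-by-leg intertwining computation $\wt{\beta}\circ\Psi=(\Psi\ot\Id)\circ\wt{\alpha}$, passage to fixed points for the Fubini crossed product, and the bimodule property of $\Psi$ combined with $\Psi\circ\alpha=\beta\circ\Phi$ for the spatial one. No gaps.
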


\begin{lem}\label{c2} Every $ \LI $-comodule is non-degenerate and saturated. In particular, for any $ \LI $-comodule $ X $ and any $ x\in X $, we have that $ x\in \overline{\LO\cdot x}^{\mathrm{w^{*}}} $.	
\end{lem}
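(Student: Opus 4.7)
My plan is to derive both properties from a single bounded-approximate-identity argument for the saturation half, and to use the canonical embedding together with the fundamental unitary $V_G$ for the non-degeneracy half.

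For the saturation half (which yields the ``in particular'' claim via Proposition \ref{p8}(c)), I would show directly that $\mathrm{Sat}(X,\alpha) \subseteq \alpha(X)$. Since $\LI$ is injective, $X\ft\LI = X\vt\LI$, and for any $y\in\mathrm{Sat}(X,\alpha)$ the slice $x_i := (\Id_X \ot e_i)(y)\in X$ is well defined for a bounded approximate identity $(e_i)_i$ of $\LO$, which I take to consist of nonnegative continuous compactly supported functions with $\int e_i = 1$ and supports shrinking to $\{e\}$. Using the defining identity $(\alpha\ot\Id)(y) = (\Id_X\ot\alpha_G)(y)$ of $\mathrm{Sat}(X,\alpha)$, a direct calculation gives
\[
\alpha(x_i) = (\Id_X\ot T_i)(y), \qquad T_i := (\Id_\LI\ot e_i)\circ\alpha_G.
\]
Duality with the convolution product on $\LO\simeq\LI_*$ (noted in Section \ref{sec2}) shows $T_i\to\Id_\LI$ in the point-w* topology, and uniform boundedness promotes this to $\Id_X\ot T_i\to\Id$ pointwise w* on $X\vt\LI$; hence $\alpha(x_i)\to y$ w*. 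By Banach-Alaoglu the bounded net $(x_i)$ admits a w*-convergent subnet with limit $x\in X$, and w*-continuity of $\alpha$ yields $\alpha(x)=y$, so $y\in\alpha(X)$.

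For non-degeneracy, I would embed $X$ as an $\LI$-subcomodule of the canonical W*-comodule $(B(H)\vt\LI,\Id_H\ot\alpha_G)$ via $\alpha$ (Remark \ref{rem1}); the action is then $\alpha(x) = (1_H\ot V_G)^*(x\ot 1_\LT)(1_H\ot V_G)$ with $V_G\in L(G)\vt\LI$. Conjugating the non-degeneracy identity by the unitary $1_H\ot V_G$ reduces it to a spatial density statement about the multiplicative unitary $V_G$: the w*-closed linear span of $\{V_G(1_\LT\ot b)V_G^* : b\in B(\LT)\}$, together with the trivial commutation relations in the first tensor leg, should generate $B(\LT)\vt B(\LT)$ as a left module over itself, which is a standard consequence of $V_G$ being manageable. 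Alternatively, a Hahn-Banach / slice-map computation using the operator-space predual $X_*\hat\otimes B(\LT)_*$ and the BAI argument from the saturation step applied to the $X_*$-slices should close the proof by showing that any functional annihilating the span must vanish.

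The main obstacle is the non-degeneracy step: while the saturation argument is a clean BAI calculation, non-degeneracy requires either a spatial density property of the multiplicative unitary $V_G$ or a careful Hahn-Banach argument over the operator-space predual. One could alternatively attempt to prove non-degeneracy first and deduce saturation from Corollary \ref{c1}, but this merely transfers the difficulty; the BAI approach to saturation is the more transparent half of the argument.
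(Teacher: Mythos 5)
Your saturation half is correct and takes a genuinely different route from the paper. The paper proves non-degeneracy first and then obtains saturation indirectly via Corollary \ref{c1}; you prove saturation directly by slicing $y\in\mathrm{Sat}(X,\alpha)$ against a bounded approximate identity of $(\LO,*)$, using the identity $\alpha\bigl((\Id_{X}\ot e_{i})(y)\bigr)=e_{i}\cdot y$ (this is exactly the computation in Proposition \ref{p2}(ii)) together with $e_{i}\cdot y\to y$ in the w*-topology. This is sound --- and in fact you can skip the subnet: $\alpha(X)$ is w*-closed by Krein--Smulian, so $y=\mathrm{w}^{*}\text{-}\lim e_{i}\cdot y\in\alpha(X)$ directly. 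It is a cleaner and more elementary proof of that half, and it yields the ``in particular'' clause via Proposition \ref{p8}. It also makes transparent why the $L(G)$-analogue needs the AP: there the required net in $A(G)$ need not exist.

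The non-degeneracy half, however, is a genuine gap: neither sketched route is a proof, and the first cannot work as stated. The density statement you reduce to --- that $\wsp\{V_{G}(1\ot b)V_{G}^{*}:b\in\BLT\}$ together with commutation in the first leg generates everything --- makes no reference to the particular subcomodule $\alpha(X)$; if such a statement about the multiplicative unitary alone sufficed, the identical argument with $W_{G}$ would prove every $L(G)$-comodule non-degenerate, contradicting Proposition \ref{p9} for groups without the AP. The missing ingredient is the covariance of $\alpha(X)$: the paper first shows, by integrating $\Ad(1_{H}\ot\lambda_{s}^{\pm 1})$ against $\LO$-functions, that $\Ad(1_{H}\ot\lambda_{s})$ leaves $\alpha(X)$ invariant for every $s\in G$, whence \emph{both} $\Ad(1_{H}\ot V_{G})$ and $\Ad(1_{H}\ot V_{G}^{*})$ preserve $\alpha(X)\vt\LI$. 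Only then is $\theta=(\alpha^{-1}\ot\Id)\circ\Ad(1_{H}\ot V_{G}^{*})\circ(\alpha\ot\Id)$ a surjective automorphism of $X\vt\LI$ with $\theta(x\ot 1)=\alpha(x)$ and $\theta((1_{H}\ot f)z)=(1_{H}\ot f)\theta(z)$, which is what converts the trivial density of $\wsp\{(1_{H}\ot f)(x\ot 1)\}$ into the required density of $\wsp\{(1_{H}\ot f)\alpha(x)\}$. Your Hahn--Banach alternative faces exactly the same obstruction: to show a functional annihilating $\{(1\ot b)\alpha(x)\}$ also annihilates $x\ot b$, you need a covariance identity expressing $(e_{i}\cdot x)\ot 1$ in terms of the generators (the $\LI$-analogue of \cite[Corollary II.1.5]{SVZ2}), which the BAI computation by itself does not provide. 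Note also that the conjugation step is not innocuous: $1_{H}\ot V_{G}$ lies in $\mathbb{C}1_{H}\vt L(G)\vt\LI$ and does not a priori normalize $\alpha(X)\vt\BLT$, which is why the paper works inside $\alpha(X)\vt\LI$ and only passes to $\BLT$ at the very end.
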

\begin{proof} Let $ (X,\alpha) $ be an $ \LI $-comodule with $ X $ a w*-closed subspace of $ B(H) $ for some Hilbert space $ H $.
	
	By Remark \ref{rem1}, we have that $ \alpha(X) $ is an $ \LI $-subcomodule of the W*-$ \LI $-comodule $ (N,\beta) $ where $ N=B(H)\vt \LI $ and $ \beta=\Id_{B(H)}\ot\alpha_{G} $.
	
	Consider the normal *-injections $\pi_{1},\ \pi_{2}\colon N\to N\vt\LI $ given by:
	\begin{align*} 
	&\la \pi_{1}(y),\omega\ot f\ra=\int_{G}\la\Ad(1_{H}\ot\lambda_{s}^{-1})(y),\omega\ra f(s)\ ds, \\
	&\la \pi_{2}(y),\omega\ot f\ra=\int_{G}\la\Ad(1_{H}\ot\lambda_{s})(y),\omega\ra f(s)\ ds, 
	\end{align*}
	for $ y\in N $, $ \omega\in N_{*} $ and $ f\in \LO $. It is easy to verify that for any $ y\in N $ we have: 
	\begin{align*} 
	&\pi_{1}(y)=(1_{H}\ot V_{G}^{*})(y\ot 1)(1_{H}\ot V_{G})=\beta(y);\\
	&\pi_{2}(y)=(1_{H}\ot V_{G})(y\ot 1)(1_{H}\ot V_{G}^{*}).
	\end{align*}
	Therefore, since $ \pi_{1}(\alpha(X))=\beta(\alpha(X))\sub \alpha(X)\vt\LI $, it follows that for any $ s\in G $ we have
	\[\Ad(1_{H}\ot\lambda_{s}^{-1})(\alpha(X))=\alpha(X), \]
	that is \[\Ad(1_{H}\ot\lambda_{t})(\alpha(X))=\alpha(X),\ \text{for all }t\in G\]
	and thus \[\pi_{2}(\alpha(X))\sub\alpha(X)\vt\LI. \]
	
	Hence, since $ 1_{H}\ot V_{G}\in\CI\vt L(G)\vt\LI $ and $ \LI'=\LI $, it follows that both $ \Ad(1_{H}\ot V_{G}^{*}) $ and $ \Ad(1_{H}\ot V_{G}) $ map $ \alpha(X)\vt\LI $ into $ \alpha(X)\vt\LI $ and so the restriction of $ \Ad(1_{H}\ot V_{G}^{*}) $ to $ \alpha(X)\vt\LI $ is a completely isometric automorphism of $ \alpha(X)\vt\LI  $.
	
It follows from the above that the map $ \theta\colon X\vt \LI\to X\vt\LI $ defined by:
	\[\theta=(\alpha^{-1}\ot \mathrm{id}_{\LI})\circ \mathrm{Ad}(1_{H}\ot V_{G}^{*})\circ(\alpha\ot \mathrm{id}_{\LI}). \]
	is a well defined w*-continuous completely isometric automorphism of $ X\vt\LI $. Also, using the definition of $ \theta $, we get that 	
	\[\theta(x\ot1)=\alpha(x) ,\qquad  x\in X \]
	and \[\theta((1_{H}\ot f)y)=(1_{H}\ot f)\theta(y),\qquad f\in \LI,\  y\in X\vt\LI. \]	
	Therefore, it follows that
	\[X\vt\LI=\wsp\{(1_{H}\ot f)\alpha(x):\ x\in X,\ f\in\LI \}, \]
	which implies that
	\[X\vt\BLT=\wsp\{(1_{H}\ot b)\alpha(x):\ x\in X,\ b\in\BLT \}, \]
	that is $ (X,\alpha) $ is non-degenerate.
	
	So, we have proved that every $ \LI $-comodule is non-degenerate. Thus it follows (from Corollary \ref{c1}) that every $ \LI $-comodule is saturated and non-degenerate.
	
	Now, since every $ \LI $-comodule $ X $ is saturated it follows (from Proposition \ref{p8}) that $ x\in \overline{\LO\cdot x}^{\text{w*}} $ for all $ x\in X $. 	
\end{proof}

\begin{pro}\label{pro2.15} For any $ \LI $-comodule $ (X,\alpha) $, we have \[(X\fcr_{\alpha}G)^{\wh{\alpha}}=(X\scr_{\alpha}G)^{\wh{\alpha}}=\alpha(X)=\mathrm{Sat}(X,\alpha)=(X\vt\LI)^{\wt{\alpha}}. \]	
\end{pro}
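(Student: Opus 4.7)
The proposition asserts a five-fold chain of equalities, and the plan is to treat the middle equality immediately, then close the two ends with a fixed-point calculation and a sandwich. The middle equality $\alpha(X) = \mathrm{Sat}(X,\alpha)$ is Lemma \ref{c2}, which guarantees that every $\LI$-comodule is saturated.

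The main technical step is to identify $(X\vt\LI)^{\wt\alpha}$ with $\mathrm{Sat}(X,\alpha)$. Take $y \in X\vt\LI$; the first map $\alpha\ot\Id_{\BLT}$ in the definition of $\wt\alpha$ lands in $X\vt\LI\vt\LI$, and so does the subsequent flip $\Id_X\ot\sigma$. The equation $\wt\alpha(y) = y\ot 1$ can be rearranged as
\[(\Id_X\ot\sigma)(\alpha\ot\Id_{\LI})(y) \;=\; \Ad(1_{H}\ot U_{G})(y\ot 1),\]
where I write $X\subseteq B(H)$. Now I would invoke the identity $\sigma\circ\alpha_{G}(f) = U_{G}(f\ot 1)U_{G}^{*}$ recorded in Section \ref{sec2}: checking first on an elementary tensor $y = x\ot f$ and extending by w*-continuity, this identity yields
\[\Ad(1_{H}\ot U_{G})(y\ot 1) \;=\; (\Id_X\ot\sigma)(\Id_X\ot\alpha_{G})(y).\]
Cancelling the outer flip $\Id_X\ot\sigma$ (which is a bijection) reduces the fixed-point condition precisely to $(\alpha\ot\Id_{\LI})(y) = (\Id_X\ot\alpha_{G})(y)$, i.e.\ $y \in \mathrm{Sat}(X,\alpha)$.

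For $(X\fcr_{\alpha}G)^{\wh{\alpha}} = (X\vt\LI)^{\wt{\alpha}}$, recall from Section \ref{sec2} that $\BLT^{\delta_{G}} = \LI$. Slicing by functionals $\omega \in B(H)_{*}$ and using that $\LI$ is injective (so $X\vt\LI = X\ft\LI$), the fixed points of $\Id_X\ot\delta_{G}$ on $X\vt\BLT$ are exactly $X\vt\LI$. Since $X\fcr_{\alpha}G = (X\vt\BLT)^{\wt{\alpha}}$ and $\wh{\alpha} = (\Id_X\ot\delta_{G})|_{X\fcr_{\alpha}G}$, this gives $(X\fcr_{\alpha}G)^{\wh{\alpha}} = (X\vt\BLT)^{\wt{\alpha}}\cap(X\vt\LI) = (X\vt\LI)^{\wt{\alpha}}$.

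Finally, $(X\scr_{\alpha}G)^{\wh{\alpha}}$ is caught in a sandwich. By Proposition \ref{pro2.13}, $\alpha(X)\subseteq X\scr_{\alpha}G \subseteq X\fcr_{\alpha}G$; and since $\alpha(X)\subseteq X\vt\LI$ and $\delta_{G}$ is trivial on $\LI$, $\wh{\alpha}$ acts trivially on $\alpha(X)$, so $\alpha(X)\subseteq (X\scr_{\alpha}G)^{\wh{\alpha}}\subseteq (X\fcr_{\alpha}G)^{\wh{\alpha}}$. The right-hand side equals $\alpha(X)$ by the steps above, closing the chain. The main obstacle is the tensor-leg bookkeeping in the $\wt\alpha$-fixed-point calculation; once the key rewriting via $U_G$ and $\sigma\circ\alpha_G$ is in hand, the remainder is a matter of combining earlier results.
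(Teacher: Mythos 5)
Your proposal is correct and follows essentially the same route as the paper: the core step identifying $(X\vt\LI)^{\wt{\alpha}}$ with $\mathrm{Sat}(X,\alpha)$ via the identity $\sigma\circ\alpha_{G}(f)=U_{G}(f\ot 1)U_{G}^{*}$, the reduction of $(X\fcr_{\alpha}G)^{\wh{\alpha}}$ to $(X\vt\LI)^{\wt{\alpha}}$ using $\BLT^{\delta_{G}}=\LI$, the appeal to Lemma \ref{c2} for $\mathrm{Sat}(X,\alpha)=\alpha(X)$, and the intersection/sandwich argument for the spatial crossed product all match the paper's proof.
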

\begin{proof}
	We prove first that $ \mathrm{Sat}(X,\alpha)=\left( X\vt \LI \right)^{\wt{\alpha}} $. Indeed, for any $ x\in X\vt \LI $, we have:
	\begin{align*} 
	x\in \left( X\vt \LI \right)^{\wt{\alpha}}&\iff \wt{\alpha}(x)=x\ot 1\\
	&\iff (\Id_{X}\ot \Ad U^{*}_{G})\circ (\Id_{X}\ot\sigma)\circ(\alpha\ot \Id_{\BLT})(x)=x\ot 1\\  
	&\iff(\alpha\ot \Id_{\LI})(x)=(\Id_{X}\ot\sigma)\left( (1_{H}\ot U_{G})(x\ot 1)(1_{H}\ot U^{*}_{G})\right)\\
	&\iff (\alpha\ot \Id_{\LI})(x)=(\Id_{X}\ot\alpha_{G})(x)\\
	&\iff x\in \mathrm{Sat}(X,\alpha).
	\end{align*}	
	For the fourth equivalence above we used the fact that $ \sigma\circ\alpha_{G}(f)=U_{G}(f\ot1)U_{G}^{*} $ for any $ f\in\LI $.
	
	Now, we prove that $ \left( X\fcr_{\alpha}G \right)^{\wh{\alpha}}=\left( X\vt \LI \right)^{\wt{\alpha}} $. Indeed, since the actions $ \Id_{X}\ot \delta_{G} $ and $ \wt{\alpha} $ commute (see Proposition \ref{pro4.2i}) and $ \wh{\alpha}=(\Id_{X}\ot\delta_{G})|_{X\fcr_{\alpha}G} $ it follows: 
	\begin{align*} 
	\left( X\fcr_{\alpha}G \right)^{\wh{\alpha}}&= \left( \left( X\vt \BLT \right)^{\wt{\alpha}}\right)^{\Id_{X}\ot\delta_{G}}\\
	&=\left( \left( X\vt \BLT \right)^{\Id_{X}\ot\delta_{G}}\right)^{\wt{\alpha}}\\
	&=  \left( X\ft \left( \BLT\right) ^{\delta_{G}}\right) ^{\wt{\alpha}}\\
	&=\left( X\vt \LI \right)^{\wt{\alpha}}.
	\end{align*}
	The last equality follows from the fact that $ \BLT^{\delta_{G}}=\LI $.
	
	By Lemma \ref{c2} we have that $ \mathrm{Sat}(X,\alpha)=\alpha(X) $ and thus we get
	\[(X\fcr_{\alpha}G)^{\wh{\alpha}}=\alpha(X)=\mathrm{Sat}(X,\alpha)=(X\vt\LI)^{\wt{\alpha}}.\]
	
	So it remains to show that $ (X\scr_{\alpha}G)^{\wh{\alpha}}=\alpha(X) $. Indeed, since $ (X\scr_{\alpha}G,\wh{\alpha}) $ is an $ L(G) $-subcomodule of $ (X\fcr_{\alpha}G,\wh{\alpha}) $ it follows that 
	\begin{align*} 
	   (X\scr_{\alpha}G)^{\wh{\alpha}}&=(X\fcr_{\alpha}G)^{\wh{\alpha}}\cap(X\scr_{\alpha}G)\\
	   &=\alpha(X)\cap (X\scr_{\alpha}G)\\&=\alpha(X),
	\end{align*}
	since $ \alpha(X)\sub X\scr_{\alpha}G $.
\end{proof}

\subsection{Crossed products of $ L(G) $-comodules} Here we consider the analogues of the Fubini and the spatial crossed products in the category of $ L(G) $-comodules. 

The main and most interesting difference between $ \LI $-comodules and $ L(G) $-comodules is that for any $ L(G) $-comodule $ (Y,\delta) $ the associated Fubini and spatial crossed products are equal without any further assumption on the group $ G $ or the space $ Y $ (Theorem \ref{thm1}). The reason behind this is that the Fubini and the spatial crossed product of an $ L(G) $-comodule admit a natural $ \LI $-comodule structure and thus they are always non-degenerate and saturated by Lemma \ref{c2}. This will be clear from the use of Lemma \ref{c2}  in the proof of Proposition \ref{p10} below, from which Theorem \ref{thm1} follows.

\begin{defin}\label{dtil} For an $ L(G) $-comodule $ (Y,\delta) $, we define the map \[ \widetilde{\delta} \colon Y \vt B(L^{2}(G)) \to  Y \ft B(L^{2}(G))  \ft L(G) \] by
		\[\widetilde{\delta}=(\mathrm{id }_{Y}\otimes \mathrm{Ad }W_{G})\circ(\mathrm{id }_{Y}\otimes\sigma)\circ(\delta\otimes \mathrm{id }_{B(L^{2}(G))}), \]
		where $ \sigma $ is the flip isomorphism on $\BLT\vt\BLT$.	
\end{defin}

\begin{pro}\label{pro4.2} If $ (Y,\delta) $ is an $ L(G) $-comodule, then $ \wt{\delta} $ is an $ L(G) $-action on $ Y\vt\BLT $  that commutes with the  $ \LI $-action $ \mathrm{id}_{Y}\ot\beta_{G} $.
	
\end{pro}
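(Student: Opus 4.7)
The plan is to mirror the proof of Proposition \ref{pro4.2i} (which the author attributes to \cite[Lemma 5.3(i)]{Ha1}), with the roles of $\LI$ and $L(G)$ interchanged and the unitary $U_G$ replaced by $W_G$. There are four things to verify: (i) $\wt{\delta}$ is a w*-continuous complete isometry; (ii) its range lies in $Y\ft\BLT\ft L(G)$; (iii) the coassociativity $(\wt{\delta}\ot\Id_{L(G)})\circ\wt{\delta}=(\Id\ot\delta_G)\circ\wt{\delta}$; and (iv) $\wt{\delta}$ commutes with $\Id_Y\ot\beta_G$ in the sense of Definition \ref{def2.3}.

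Item (i) is immediate, since $\wt{\delta}$ is a composition of three w*-continuous complete isometries: the Fubini extension $\delta\ot\Id_{\BLT}$, the tensor flip $\Id_Y\ot\sigma$, and the unitary conjugation $\Id_Y\ot\Ad W_G$. For (ii) I would track the legs step by step. Starting from $z\in Y\vt\BLT$, the element $(\delta\ot\Id)(z)$ lies in $Y\ft L(G)\ft\BLT$ because $\delta(Y)\sub Y\ft L(G)$; applying $\Id_Y\ot\sigma$ swaps the last two factors to land in $Y\ft\BLT\ft L(G)$. It remains to check that $\Ad W_G$ preserves $\BLT\vt L(G)$: since $W_G\in\LI\vt L(G)$, and since elements of $\BLT\vt L(G)$ as well as $W_G$ both commute with $1\ot y$ for every $y\in L(G)'=R(G)$, so does the conjugate $W_Gx W_G^{*}$, which forces $W_G x W_G^{*}\in\BLT\vt L(G)$. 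Tensoring with $\Id_Y$ then gives the claim.

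For (iii), the verification is a direct computation which uses the coassociativity of $\delta$ on $Y$ together with the pentagon identity satisfied by $W_G$; in the present conventions this pentagon identity is precisely the coassociativity of $\delta_G$ on $\BLT$ (i.e. $(\delta_G\ot\Id)\circ\delta_G=(\Id\ot\delta_G)\circ\delta_G$), and it transfers, after the flips, to the corresponding identity for $\wt{\delta}$. For (iv) one unfolds Definition \ref{def2.3} and observes that since $\delta$ and $\beta_G$ act on disjoint tensor legs of $Y\vt\BLT$, the commutation reduces to the commutation of the two actions $\delta_G$ and $\beta_G$ on $\BLT$ itself. This last commutation is standard and comes from the fact that $W_G\in\LI\vt L(G)$ and $U_G\in R(G)\vt\LI$, so when placed in the appropriate non-overlapping legs they commute, using that $\LI$ is abelian and that $L(G)$ and $R(G)$ are mutual commutants.

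The main obstacle is the bookkeeping of leg numberings as one composes flips, slice maps, and unitary conjugations. Once the tensor-leg indices are arranged correctly, both the coassociativity and the commutation with $\Id_Y\ot\beta_G$ fall out of the standard multiplicative-unitary identities for $W_G$ and $U_G$, so the proof is really a bookkeeping exercise rather than a genuinely new computation beyond what was already carried out for Proposition \ref{pro4.2i}.
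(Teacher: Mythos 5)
Your items (i) and (ii) are fine; in (ii) the commutant argument (that $\mathrm{Ad}\,W_G$ preserves $\BLT\vt L(G)=(\mathbb{C}1\vt R(G))'$) is a legitimate alternative to the paper's route, which instead uses that $W_G\in\LI\vt L(G)$ and that $Y\ft\BLT\ft L(G)$ is a $\CI\vt\BLT\vt L(G)$-bimodule. For (iii), be aware that for a general comodule $\delta$ is only a complete isometry, not a $*$-homomorphism, so you cannot check coassociativity on generators of $Y\vt\BLT$; the paper circumvents this by first embedding $(Y,\delta)$ into the canonical W*-comodule $(B(H)\vt L(G),\Id_{B(H)}\ot\delta_G)$ via Remark \ref{rem1}, where $\wt{\varepsilon}$ is a normal $*$-monomorphism and the identity can be verified on $z\ot1$, $1\ot1\ot f$ and $1\ot1\ot\rho_s$. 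Your direct pentagon computation can be made to work, but it must then be carried out at the level of the maps (or you must perform the same reduction first).

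The genuine gap is in (iv). It is not true that the two actions ``act on disjoint tensor legs'': $\wt{\delta}$ ends with conjugation by $W_G$ placed on the $\BLT$ leg together with the newly created $L(G)$ leg, while $\Id_Y\ot\beta_G=\Id_Y\ot\mathrm{Ad}\,U_G^{*}$ acts on that same $\BLT$ leg. In the shared leg, $W_G$ contributes an $\LI$ factor (since $W_G\in\LI\vt L(G)$) whereas $U_G$ contributes an $R(G)$ factor (since $U_G\in R(G)\vt\LI$), and $\LI$ and $R(G)$ do not commute (indeed $\rho_s f\rho_s^{*}$ is the right translate of $f$). So the implementing unitaries do not commute, and the commutation of the actions does not reduce to the commutation of $\delta_G$ and $\beta_G$ on $\BLT$, nor does it follow from $L(G)'=R(G)$ together with commutativity of $\LI$. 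What the paper actually proves, after reducing via Remark \ref{rem1} to the case $\delta=\Id_{B(H)}\ot\delta_G$, is that the two sides of the commutation identity are $\mathrm{Ad}\,A$ and $\mathrm{Ad}\,B$ applied to $a\ot b\ot1\ot1$ for two explicit products $A,B$ of flips and copies of $W_G$ and $U_G^{*}$, and that $A^{*}B\in\mathbb{C}1\vt\mathbb{C}1\vt\BLT\vt\BLT\sub\left(L(G)\vt\BLT\vt\mathbb{C}1\vt\mathbb{C}1\right)'$; this is a genuine four-leg unitary computation, not a bookkeeping consequence of the facts you cite. To close the gap you need both the reduction to the canonical W*-comodule (again, because for a general complete isometry $\delta$ you cannot argue on generators) and this explicit verification or an equivalent one.
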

\begin{proof} By Remark \ref{rem1} we may suppose that $ Y$ is a w*-closed subspace of a von Neumann algebra $ N $ of the form $ N=B(H)\vt L(G) $ for some Hilbert space $ H $ and $ \delta=\varepsilon|_{Y} $, where $ \varepsilon=\Id_{B(H)}\ot\delta_{G} $. Then obviously $ \wt{\delta}=\wt{\varepsilon}|_{Y\vt\BLT} $ and $ \wt{\varepsilon} $ is a W*-$ L(G) $-action on $ N\vt\BLT $. Since $ \wt{\varepsilon} $ is a *-monomorphism, the latter can be easily verified by checking the relation \[ (\wt{\varepsilon}\ot\Id_{L(G)})\circ\wt{\varepsilon}=(\Id_{N}\ot\Id_{\BLT}\ot\delta_{G})\circ\wt{\varepsilon}\] on the generators of $ N\vt\BLT $, that is on the elements of the form $ z\ot1 $, $ 1\ot1\ot f $ and $ 1\ot1\ot\rho_{s} $ for $ z\in N $, $ f\in \LI $ and $ s\in G $, because $ \BLT $ is generated by $ R(G) $ and $ \LI $. 

Thus, in order to prove that $ \wt{\delta} $ is an $ L(G) $-action on $ Y\vt\BLT=Y\ft \BLT $, we only need to show that $ \wt{\varepsilon}(Y\ft\BLT)\sub Y\ft\BLT\ft L(G) $. Indeed, we have 
 
    \[(\varepsilon\ot\Id_{\BLT})(Y\ft\BLT)\sub Y\ft L(G)\ft \BLT\]
    and thus
   \[(\Id_{Y}\ot\sigma)\circ(\varepsilon\ot\Id_{\BLT})(Y\ft\BLT)\sub Y\ft \BLT \ft L(G).\]
Since $ W_{G}\in \LI\vt L(G) $ and $ Y\ft \BLT\ft L(G) $ is a $ \CI\vt \BLT\vt L(G)  $-bimodule, we get:
\begin{align*}
(\mathrm{id }_{Y}\otimes \mathrm{Ad }W_{G})\circ(\mathrm{id }_{Y}\otimes\sigma)\circ(\varepsilon\otimes \mathrm{id }_{B(L^{2}(G))})&(Y\ft\BLT)\\&\sub Y\ft\BLT\ft L(G).
\end{align*}
 
 On the other hand, in order to prove that $ \wt{\delta} $ and $ \Id_{Y}\ot\beta_{G} $ commute,  it suffices to verify that $ \mathrm{id}_{B(H)}\ot \mathrm{id}_{L(G)}\ot\beta_{G} $ and $ \wt{\varepsilon} $ commute, where $ \varepsilon=\mathrm{id}_{B(H)}\ot \delta_{G} $. Because $ \wt{\varepsilon} $ and $ \mathrm{id}_{B(H)}\ot \mathrm{id}_{L(G)}\ot\beta_{G} $ act identically on the first factor $ B(H) $, we only need to prove that $   \mathrm{id}_{L(G)}\ot\beta_{G} $ and $ \wt{\delta_{G}} $ commute, that is:
\begin{align}\label{1}
&(\wt{\delta_{G}}\ot \mathrm{id}_{\LI})\circ(\mathrm{id}_{L(G)}\ot \beta_{G} )=\\
=&(\mathrm{id}_{L(G)} \ot \mathrm{id}_{\BLT} \ot \sigma)\circ(\mathrm{id}_{L(G)} \ot \beta_{G}\ot \mathrm{id}_{L(G)} )\circ\wt{\delta_{G}}\nonumber
\end{align}
Let $ S $ denote the unitary on $ \LT\ot \LT $ with $ S(\xi\ot\eta)=\eta\ot\xi $. Thus, the flip isomorphism $ \sigma $ on $ \BLT\vt\BLT $ is written as $ \sigma= \mathrm{Ad}S $.
If $ a\in L(G) $ and $ b\in\BLT $, then by applying the left and right hand sides of \eqref{1} on $ a\ot b $, we get respectively:
\begin{align}\label{2} 
&(\wt{\delta_{G}}\ot \mathrm{id}_{\LI})\circ(\mathrm{id}_{L(G)}\ot \beta_{G} )(a\ot b)=\nonumber\\
&\mathrm{Ad}[(1\ot W_{G}\ot 1)(1\ot S\ot1)(W_{G}^{*}\ot1\ot1)(1\ot S\ot1)(1\ot1\ot S)\\
&(1\ot U_{G}^{*}\ot1) ](a\ot b\ot 1\ot 1)\nonumber 
\end{align}

and

\begin{align}\label{3} 
&(\mathrm{id}_{L(G)} \ot \mathrm{id}_{\BLT} \ot \sigma)\circ(\mathrm{id}_{L(G)} \ot \beta_{G}\ot \mathrm{id}_{L(G)} )\circ\wt{\delta_{G}}(a\ot b)=\nonumber\\
&\mathrm{Ad}[(1\ot 1\ot S)(1\ot U_{G}^{*}\ot1)(1\ot1\ot S)(1\ot W_{G}\ot1)(1\ot S\ot 1)\\
&( W_{G}^{*}\ot1\ot1)(1\ot S\ot 1) ](a\ot b\ot 1\ot 1)\nonumber 
\end{align}

Consider the unitaries in the square brackets in \eqref{2} and \eqref{3}: \[A=(1\ot W_{G}\ot 1)(1\ot S\ot1)(W_{G}^{*}\ot1\ot1)(1\ot S\ot1)(1\ot1\ot S)(1\ot U_{G}^{*}\ot1)\] and  \[B=(1\ot 1\ot S)(1\ot U_{G}^{*}\ot1)(1\ot1\ot S)(1\ot W_{G}\ot1)(1\ot S\ot 1)( W_{G}^{*}\ot1\ot1)(1\ot S\ot 1).\]
Then, \eqref{1} is equivalent to 
\[A(a\ot b\ot 1\ot 1)A^{*}=B(a\ot b\ot 1\ot 1)B^{*},\quad\text{for all } a\in L(G)\text{ and } b\in \BLT, \]
which in turn is equivalent to the condition: 
\[ A^{*}B\in R(G)\vt \mathbb{C}1\vt\BLT\vt\BLT, \]
which is true since a computation shows that \[A^{*}B\in \mathbb{C}1 \vt \mathbb{C}1\vt\BLT\vt\BLT.\]	
\end{proof}

\begin{defin}\label{def4.2i} Let $ (Y,\delta) $ be an $L(G) $-comodule. The \bfc{Fubini crossed product} of $ Y $ by $ \delta $ is defined to be the $ \LI $-comodule $ (Y\ltimes^{\cl{F}}_{\delta}G,\ \widehat{\delta}) $, where
		\[Y\ltimes^{\cl{F}}_{\delta}G:=(Y\vt B(L^{2}(G)))^{\widetilde{\delta}} \]
		and \[\widehat{\delta}:=(\mathrm{id }_{X}\otimes\beta_{G})|_{Y\dfcr_{\delta}G}. \]
		The $ \LI $-action $ \widehat{\delta}\colon Y\dfcr_{\delta}G\to (Y\dfcr_{\delta}G)\vt \LI $ is called the \bfc{dual action} of $ \delta $. 
\end{defin} 

By Proposition \ref{pro4.2} and Lemma \ref{lem2.3},  $ (Y\dfcr_{\delta},\wh{\delta}) $ is indeed an $ \LI $-subcomodule of $ (Y\vt\BLT,\Id_{Y}\ot\beta_{G}) $.

\begin{defin} \label{def2.11}  Let $ (Y,\delta) $ be an $ L(G) $-comodule and suppose that $ Y $ is w*-closed in $ B(K) $ for some Hilbert space $ K $. The \bfc{spatial crossed product} of $ Y $ by $ \delta $ is defined to be the space
		\begin{align*}
			Y\dscr_{\delta}G&:=\wsp\{(1_{K}\ot f)\delta(y)(1_{K}\ot g):\ f,g\in\LI,\ y\in Y \}\\&\sub B(K)\vt\BLT.
		\end{align*} 
\end{defin} 

\begin{pro}\label{pro2.13i} Let $ (Y,\delta) $ be an $ L(G) $-comodule and suppose that $ Y $ is a w*-closed subspace of $ B(K) $ for some Hilbert space $ K $. Then, $ Y\dfcr_{\delta}G $ is an $ L^{\ap}(G) $-bimodule, i.e.
	\[(1_{K}\ot f)y(1_{K}\ot g)\in Y\dfcr_{\delta}G,\qquad f,g\in \LI,\ y\in Y\dfcr_{\delta}G \]
	and $ \delta(Y)\sub Y\dfcr_{\delta}G $. Thus, we have: 
	\[Y\dscr_{\delta}G\sub Y\dfcr_{\delta}G. \]
	In addition, $ \wh{\delta}(Y\dscr_{\delta}G)\sub (Y\dscr_{\delta}G)\vt \LI $, that is $ Y\dscr_{\delta}G $ is an $ \LI $-subcomodule of $ (Y\dfcr_{\delta}G,\wh{\delta}) $.	
\end{pro}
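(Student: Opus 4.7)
The plan is to verify the four assertions in sequence, following the pattern of the analogous $\LI$-comodule case \cite[Proposition 3.8]{DA}. The key inputs are that $W_G \in \LI\vt L(G)$, that $\LI$ is abelian, and that $\BLT^{\beta_G} = L(G)$.

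First, for the $\LI$-bimodule structure of $Y\dfcr_{\delta}G$, I would establish the identity
\[\widetilde{\delta}\bigl((1_K\otimes f)z(1_K\otimes g)\bigr) = (1_K\otimes f\otimes 1)\,\widetilde{\delta}(z)\,(1_K\otimes g\otimes 1)\]
for all $z\in Y\vt\BLT$ and $f,g\in\LI$, which on restriction to the fixed-point space $Y\dfcr_{\delta}G$ yields the bimodule property. On elementary tensors $z=y\otimes b$, the identity reduces to the commutation of $W_G$ with $f\otimes 1$ and $g\otimes 1$: since $W_G\in\LI\vt L(G)$ and $\LI$ is abelian, these factors commute with $W_G$ inside $\LI\vt L(G)$ and thus slide through the $\mathrm{Ad}W_G$ step unchanged; w*-continuity extends the identity to all of $Y\vt\BLT$.

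For $\delta(Y)\subseteq Y\dfcr_{\delta}G$, coassociativity of $\delta$ gives $(\delta\otimes\mathrm{id}_{L(G)})(\delta(y)) = (\mathrm{id}_Y\otimes\delta_G)(\delta(y))$, reducing the claim $\widetilde{\delta}(\delta(y)) = \delta(y)\otimes 1$ to verifying $(\mathrm{Ad}W_G\circ\sigma\circ\delta_G)(a) = a\otimes 1$ for $a\in L(G)$. On the generators $a=\lambda_s$ one has $\delta_G(\lambda_s) = \lambda_s\otimes\lambda_s$, which is $\sigma$-invariant, and $W_G\delta_G(\lambda_s)W_G^* = \lambda_s\otimes 1$ follows immediately from the definition $\delta_G(x) = W_G^*(x\otimes 1)W_G$; w*-continuity closes the claim. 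Combined with the bimodule property just proved, this gives $Y\dscr_{\delta}G\subseteq Y\dfcr_{\delta}G$, since the spatial crossed product is by definition the w*-closed span of elements of the form $(1_K\otimes f)\delta(y)(1_K\otimes g)$.

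For the final claim that $Y\dscr_{\delta}G$ is an $\LI$-subcomodule, I would compute $\widehat{\delta} = (\mathrm{id}_Y\otimes\beta_G)|_{Y\dfcr_{\delta}G}$ on the generators. Since $\mathrm{id}_{B(K)}\otimes\beta_G$ is a $*$-homomorphism on the ambient algebra $B(K)\vt\BLT$, one gets
\[\widehat{\delta}\bigl((1_K\otimes f)\delta(y)(1_K\otimes g)\bigr) = (1_K\otimes\beta_G(f))\,(\delta(y)\otimes 1)\,(1_K\otimes\beta_G(g)),\]
where I have used that $\beta_G|_{L(G)} = \cdot\otimes 1$ (immediate from $\BLT^{\beta_G}=L(G)$) to identify $(\mathrm{id}_Y\otimes\beta_G)(\delta(y)) = \delta(y)\otimes 1$. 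To finish, one needs $\beta_G(\LI)\subseteq\LI\vt\LI$, verified by a direct calculation on $L^2(G\times G)$ giving $(\beta_G(f)\xi)(s,t) = f(st^{-1})\xi(s,t)$. Approximating $\beta_G(f)$ and $\beta_G(g)$ by finite sums in $\LI\otimes\LI$ and invoking separate w*-continuity of multiplication, the right-hand side is a w*-limit of terms of the form $(1_K\otimes f')\delta(y)(1_K\otimes g')\otimes h \in (Y\dscr_{\delta}G)\otimes\LI$, and hence lies in $(Y\dscr_{\delta}G)\vt\LI$. I expect the computation of $\beta_G$ on $\LI$ and on $L(G)$, which relies on the explicit form of the multiplicative unitary $U_G$, to be the main technical point; the first three parts are routine bookkeeping once the commutation of $W_G$ with $\LI\otimes 1$ has been extracted.
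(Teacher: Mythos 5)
Your proposal is correct and follows essentially the same route as the paper's own proof: the $\LI$-bimodule property of $Y\dfcr_{\delta}G$ via the commutation $W_{G}(f\ot 1)W_{G}^{*}=f\ot 1$ (as $W_{G}\in\LI\vt L(G)$ and $\LI$ is abelian), the inclusion $\delta(Y)\sub Y\dfcr_{\delta}G$ via the comodule identity together with $\sigma\circ\delta_{G}=\delta_{G}$, and the subcomodule property of $Y\dscr_{\delta}G$ via $\beta_{G}(\LI)\sub\LI\vt\LI$ and $(\Id\ot\beta_{G})(\delta(y))=\delta(y)\ot 1$. The only differences are presentational (you state the module identity for $\wt{\delta}$ on all of $Y\vt\BLT$ before restricting to the fixed points, and you treat the two-sided generators explicitly in the last step), so no further comment is needed.
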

\begin{proof} Let $ f\in\LI $ and $ y\in  Y\dfcr_{\delta}G  $. Then, by Remark \ref{rem5} we have that $ (1_{K}\ot f)y\in Y\vt \BLT $ and $ \wt{\delta}(y)=y\ot 1 $, by Definition \ref{def4.2i}. Also, by Remark \ref{rem5}, we have that
	\[(\delta\ot \mathrm{id }_{\BLT})((1_{K}\ot f)y)=(1_{K}\ot1_{\LT}\ot f)(\delta\ot \mathrm{id }_{\BLT})(y). \] 
	Thus, it follows:
	\begin{align*} 
	\wt{\delta}((1_{H}\ot f)y)&=(\mathrm{id }_{Y}\otimes \mathrm{Ad }W_{G})\circ(\mathrm{id }_{Y}\otimes\sigma)\circ(\delta\otimes \mathrm{id }_{B(L^{2}(G))})((1_{K}\ot f)y)\\
	&=(\mathrm{id }_{Y}\otimes \mathrm{Ad }W_{G})\circ(\mathrm{id }_{Y}\otimes\sigma)\left((1_{K}\ot1_{\LT}\ot f)(\delta\ot \mathrm{id }_{\BLT})(y) \right)\\
	&=\left[ (\mathrm{id }_{B(K)}\otimes \mathrm{Ad }W_{G})\circ(\mathrm{id }_{B(K)}\otimes\sigma)((1_{K}\ot1_{\LT}\ot f))\right] \wt{\delta}(y)\\
	&=\left[(1_{K}\ot W_{G})(1_{K}\ot f\ot 1_{\LT})(1_{K}\ot W^{*}_{G}) \right](y\ot1_{\LT})\\
	&=(1_{K}\ot f)y\ot1_{\LT}, 
	\end{align*}
	where the third equality above holds since $ (\mathrm{id }_{B(K)}\otimes \mathrm{Ad }W_{G})\circ(\mathrm{id }_{B(K)}\otimes\sigma) $ is a *-homomorphism and thus multiplicative, while the last equality is true because $ W_{G}\in \LI\vt L(G) $ and thus $ W_{G}(f\ot1)W_{G}^{*}=f\ot1 $, and $ \wt{\delta}(y)=y\ot 1 $. Therefore, $$ (1_{K}\ot f)y\in Y\dfcr_{\delta}G.$$ Similarly, we get $ y(1_{K}\ot g)\in Y\dfcr_{\delta}G $ for all $g\in \LI  $ and $ y\in Y\dfcr_{\delta}G $.
	
	On the other hand, if $ x\in Y $, then:
	\begin{align*} 
	\wt{\delta}(\delta(x))&=(\mathrm{id }_{Y}\otimes \mathrm{Ad }W_{G})\circ(\mathrm{id }_{Y}\otimes\sigma)\circ(\delta\otimes \mathrm{id }_{B(L^{2}(G))})(\delta(x))\\
	&=(\mathrm{id }_{Y}\otimes \mathrm{Ad }W_{G})\circ(\mathrm{id }_{Y}\otimes\sigma)\circ(\mathrm{id }_{Y}\ot\delta_{G})(\delta(x))\\
	&=	(\mathrm{id }_{Y}\otimes \mathrm{Ad }W_{G})\circ(\mathrm{id }_{Y}\ot\delta_{G})(\delta(x))\\
	&=(1_{K}\ot W_{G})(1_{K}\ot W_{G}^{*})(\delta(x)\ot 1_{\LT})(1_{K}\ot W_{G})(1_{K}\ot W_{G}^{*})\\
	&=\delta(x)\ot 1_{\LT},
	\end{align*}
	where the third equality holds because $ \sigma\circ\delta_{G}=\delta_{G} $. Hence, $ \delta(Y)\sub Y\dfcr_{\delta}G $.
	
	Let $ x\in Y $ and $ f\in \LI $. Since $ \beta_{G}(z)=z\ot 1 $ for any $ z\in L(G) $ and $ \delta(x)\in Y\ft L(G) $, it follows that $(\mathrm{id }_{B(K)}\ot\beta_{G})(\delta(x))=\delta(x)\ot 1$. Thus we get:
	\begin{align*} 
	\wh{\delta}((1_{K}\ot f)\delta(x))&=(\mathrm{id }_{B(K)}\ot\beta_{G})((1_{K}\ot f)\delta(x))\\
	&=(1_{K}\ot\beta_{G}(f))(\mathrm{id }_{B(K)}\ot\beta_{G})(\delta(x))\\
	&=(1_{K}\ot\beta_{G}(f))(\delta(x)\ot 1)\in (Y\dscr_{\delta}G)\vt \LI,
	\end{align*}
	because $ \beta_{G}(f)\in \LI\vt \LI $ and $\delta(x)\in Y\dscr_{\delta}G$. Therefore $ (Y\dscr_{\delta}G,\wh{\delta} )$ is  an $ \LI $-subcomodule of $(Y\dfcr_{\delta}G,\wh{\delta} ) $.	
\end{proof}

\begin{pro}[Uniqueness of the crossed product]\label{pro2.14i} Let $ (Y,\delta) $ and $ (Z,\varepsilon) $ be two $ L(G) $-comodules and suppose that $ Y $ and $ Z $ are w*-closed subspaces of $ B(H) $ and $ B(K) $ respectively. If there exists an $ L(G) $-comodule isomorphism $ \Phi\colon Y\to Z $, then the isomorphism $\Psi:= \Phi\ot \mathrm{id }_{\BLT}\colon Y\vt\BLT\to Z\vt\BLT $ is an $ L(G) $-comodule isomorphism from $ (Y\vt\BLT,\wt{\delta}) $ onto $ (Z\vt\BLT,\wt{\varepsilon}) $, which maps $ Y\dfcr_{\delta}G $ onto $ Z\dfcr_{\varepsilon}G $ and $ Y\dscr_{\delta}G $ onto $ Z\scr_{\varepsilon}G $. Also, $ \Psi|_{Y\dfcr_{\delta}G} $ is an $ \LI $-comodule isomorphism from $ (Y\dfcr_{\delta}G,\wh{\delta}) $ onto  $ (Z\dfcr_{\varepsilon}G,\wh{\varepsilon})$ and $ \Psi|_{Y\dscr_{\delta}G} $ is an $ \LI $-comodule isomorphism from $ (Z\dscr_{\delta}G,\wh{\delta}) $ onto  $ (Z\dscr_{\varepsilon}G,\wh{\varepsilon})$. Furthermore, $ \Psi $ is an $ \LI $-bimodule map, i.e. $ \Psi((1_{H}\ot f)x(1_{H}\ot g))=(1_{K}\ot f)\Psi(x)(1_{K}\ot g) $, for all $ f,g\in \LI $ and $ x\in Y\vt\BLT $.
\end{pro}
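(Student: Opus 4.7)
The plan is to transfer the structure via $\Psi = \Phi \ot \mathrm{id}_{B(L^2(G))}$ exactly as in the $\LI$-comodule case (Proposition \ref{pro2.14}), by a series of routine verifications all of which reduce to the defining intertwining property of $\Phi$ as an $L(G)$-comodule morphism.

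First I would check that $\Psi$ is a w*-bicontinuous completely isometric surjection from $Y\vt\BLT$ onto $Z\vt\BLT$. This is standard functoriality of the Fubini/spatial tensor product with $\BLT$ (an injective factor, so $Y\vt\BLT = Y\ft\BLT$): the tensor of a w*-bicontinuous complete isometry with $\mathrm{id}_{\BLT}$ extends to such a map between the Fubini products, and the inverse is $\Phi^{-1}\ot\mathrm{id}_{\BLT}$. The $ \LI $-bimodule property $\Psi((1_H\ot f)x(1_H\ot g)) = (1_K\ot f)\Psi(x)(1_K\ot g)$ is then immediate since $\Phi$ acts on the first tensor factor while multiplication by $1\ot f$ affects only the second.

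Next I would verify the intertwining $(\Psi\ot\mathrm{id}_{L(G)})\circ\wt{\delta} = \wt{\varepsilon}\circ\Psi$. Unpacking Definition \ref{dtil}, the operators $\mathrm{Ad}\,W_G$ and $\sigma$ act only on the $\BLT$-factors, so they commute with $\Phi$ on the nose; the only nontrivial piece is $(\Phi\ot\mathrm{id}_{L(G)})\circ\delta = \varepsilon\circ\Phi$, which is precisely the statement that $\Phi$ is an $L(G)$-comodule morphism. From this intertwining, applying $\Psi$ and $\Psi^{-1}$ to fixed points shows $\Psi((Y\vt\BLT)^{\wt{\delta}}) = (Z\vt\BLT)^{\wt{\varepsilon}}$, i.e.\ $\Psi(Y\dfcr_{\delta}G) = Z\dfcr_{\varepsilon}G$. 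For the spatial crossed product, the bimodule property gives
\[\Psi((1_H\ot f)\delta(y)(1_H\ot g)) = (1_K\ot f)(\Phi\ot\mathrm{id})(\delta(y))(1_K\ot g) = (1_K\ot f)\varepsilon(\Phi(y))(1_K\ot g),\]
and since $\Phi(Y) = Z$ and $\Psi$ is a w*-homeomorphism (so w*-closed spans are preserved), $\Psi$ maps the spanning set of $Y\dscr_{\delta}G$ onto a spanning set of $Z\dscr_{\varepsilon}G$.

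Finally, to see that the restrictions intertwine the dual actions $\wh{\delta}$ and $\wh{\varepsilon}$, I would use that both are defined as the restrictions of $\mathrm{id}\ot\beta_G$, and $(\Phi\ot\mathrm{id}_{\BLT}\ot\mathrm{id}_{\LI})\circ(\mathrm{id}_Y\ot\beta_G) = (\mathrm{id}_Z\ot\beta_G)\circ(\Phi\ot\mathrm{id}_{\BLT})$ trivially, since $\beta_G$ acts on the $\BLT$-factor alone. Combining with the Fubini/spatial surjections already established, $\Psi|_{Y\dfcr_{\delta}G}$ and $\Psi|_{Y\dscr_{\delta}G}$ are $\LI$-comodule isomorphisms onto the corresponding crossed products of $Z$. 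I do not anticipate a real obstacle here: everything is formal manipulation with tensor legs, and the main thing to be careful about is the ordering of factors and the w*-continuity when passing from the spanning elements to the w*-closed spans in the spatial crossed product.
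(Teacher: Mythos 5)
Your proposal is correct and follows essentially the same route as the paper's proof: the key computation is the intertwining $\wt{\varepsilon}\circ\Psi=(\Psi\ot\mathrm{id})\circ\wt{\delta}$ reduced to the comodule-morphism identity $\varepsilon\circ\Phi=(\Phi\ot\mathrm{id})\circ\delta$ (since $\mathrm{Ad}\,W_{G}$ and $\sigma$ live on the $\BLT$-legs), followed by transfer of fixed points for the Fubini crossed product, the $\LI$-bimodule property plus $\Psi(\delta(Y))=\varepsilon(Z)$ for the spatial one, and the trivial commutation with $\mathrm{id}\ot\beta_{G}$ for the dual actions. No gaps.
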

\begin{proof} First, since $ \Phi $ is a comodule morphism we have that $ \varepsilon\circ\Phi=(\Phi\ot \Id)\circ\delta $ and hence:
	\begin{align*} 
	\wt{\varepsilon}\circ\Psi&=(\Id\ot \mathrm{Ad }W_{G})\circ(\Id\ot\sigma)\circ(\varepsilon\ot \Id)\circ(\Phi\ot \Id)\\ 
	&=(\Id\ot \mathrm{Ad }W_{G})\circ(\Id\ot\sigma)\circ((\varepsilon\circ\Phi)\ot \Id)\\
	&=(\Id\ot \mathrm{Ad }W_{G})\circ(\Id\ot\sigma)\circ\left[ ((\Phi\ot \Id)\circ\delta)\ot \Id\right] \\
	&=(\Id\ot \mathrm{Ad }W_{G})\circ(\Id\ot\sigma)\circ(\Phi\ot \Id\ot \Id)\circ(\delta\ot \Id)\\
	&=(\Phi\ot \Id\ot \Id)\circ(\Id\ot \mathrm{Ad }W_{G})\circ(\Id\ot\sigma)\circ(\delta\ot \Id)\\
	&=(\Psi\ot \Id)\circ\wt{\delta},
	\end{align*}
	which proves that $ \Psi $ is an $ L(G) $-comodule isomorphism from $ (Y\vt\BLT,\wt{\delta}) $ onto $ (Z\vt\BLT,\wt{\varepsilon}) $. This implies that $ \Psi $ maps the fixed point subspace $ Y\dfcr_{\delta}G $ of $ \wt{\delta} $ onto the fixed point subspace $ Z\dfcr_{\varepsilon}G $ of $ \wt{\varepsilon} $. 
	
	On the other hand, the relation $ \varepsilon\circ\Phi=(\Phi\ot \Id)\circ\delta $ yields that
	\[\Psi(\delta(Y))=(\Phi\ot \Id)(\delta(Y))=\varepsilon(\Phi(Y))=\varepsilon(Z) \]
	and since $ \Psi $ is an $ \LI $-bimodule isomorphism (see Remark \ref{rem5}) it follows that $ \Psi $ maps $ Y\dscr_{\delta}G $ onto $ Z\dscr_{\varepsilon}G $.
	
	 Finally, we have
	\begin{align*} 
	\wh{\varepsilon}\circ\Psi&=(\mathrm{id }_{Z}\ot\beta_{G})\circ(\Phi\ot \mathrm{id }_{\BLT})\\&=(\Phi\ot \mathrm{id }_{\BLT}\ot \mathrm{id }_{\LI})\circ(\mathrm{id }_{Y}\ot\beta_{G})\\
	&=(\Psi\ot \mathrm{id }_{\LI})\circ\wh{\delta}.
	\end{align*}	
\end{proof}

Note that until now everything seems to work in complete analogy to the case of $ \LI $-comodules. However, from now on the differences between $ \LI $-comodules and $L(G)$-comodules will start to become apparent.

\begin{pro}\label{p11} For any $ L(G) $-comodule $ (Y,\delta) $ we have:
	\[\delta(Y)\sub\left( Y\dfcr_{\delta}G \right)^{\wh{\delta}}=\mathrm{Sat}(Y,\delta)=\left( Y\ft L(G) \right)^{\wt{\delta}}.\]
\end{pro}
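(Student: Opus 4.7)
The proof will follow the same three-step pattern as Proposition \ref{pro2.15}, with the roles of $\LI$ and $L(G)$ interchanged. The crucial difference is that, unlike $\LI$-comodules (which are saturated by Lemma \ref{c2}), $L(G)$-comodules are not known to be saturated at this stage; consequently, I only expect the inclusion $\delta(Y)\sub\mathrm{Sat}(Y,\delta)$ rather than equality, and this is precisely why the statement is an inclusion followed by two equalities.

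The heart of the argument is the identity $\mathrm{Sat}(Y,\delta)=(Y\ft L(G))^{\wt{\delta}}$. For $x\in Y\ft L(G)$, I would unpack the fixed-point condition $\wt{\delta}(x)=x\ot1$ by applying $\Id_Y\ot\Ad W_G^{*}$ and then $\Id_Y\ot\sigma$ to both sides, which rewrites it as
\[(\delta\ot\Id_{L(G)})(x)=(\Id_Y\ot\sigma)\circ(\Id_Y\ot\Ad W_G^{*})(x\ot 1).\]
The right-hand side equals $(\Id_Y\ot\delta_G)(x)$: the slice-map characterization of $Y\ft L(G)$ reduces this to the identity $\delta_G(b)=W_G^{*}(b\ot 1)W_G$ for $b\in L(G)$, followed by the symmetry $\sigma\circ\delta_G=\delta_G$ on $L(G)$ (which holds since $\delta_G(\lambda_s)=\lambda_s\ot\lambda_s$ and extends by normality). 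This is exactly the defining condition of $\mathrm{Sat}(Y,\delta)$. I expect this computation to be the main obstacle; it is the $L(G)$-analogue of the identity $\sigma\circ\alpha_G(f)=U_G(f\ot 1)U_G^{*}$ used in Proposition \ref{pro2.15} for the $\LI$-case.

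For the equality $(Y\dfcr_{\delta}G)^{\wh{\delta}}=(Y\ft L(G))^{\wt{\delta}}$, I would invoke Proposition \ref{pro4.2}: since $\wt{\delta}$ and $\Id_Y\ot\beta_G$ commute, and since $(Y\vt\BLT)^{\Id_Y\ot\beta_G}=Y\ft\BLT^{\beta_G}=Y\ft L(G)$ (using that $\BLT$ is injective, so $Y\vt\BLT=Y\ft\BLT$, together with $\BLT^{\beta_G}=L(G)$), intersecting the two fixed-point subspaces gives
\[(Y\dfcr_{\delta}G)^{\wh{\delta}}=(Y\vt\BLT)^{\wt{\delta}}\cap(Y\ft L(G))=(Y\ft L(G))^{\wt{\delta}}.\]
Finally, the inclusion $\delta(Y)\sub\mathrm{Sat}(Y,\delta)$ is immediate from Definition \ref{d2} (or directly from coassociativity of $\delta$), and combining the three pieces yields the proposition. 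Apart from the $\sigma$-flip computation above, every step is routine bookkeeping with fixed points of commuting actions.
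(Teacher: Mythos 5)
Your proposal is correct and follows essentially the same route as the paper: the inclusion $\delta(Y)\sub\mathrm{Sat}(Y,\delta)$ from Definition \ref{d2}, the identification $\mathrm{Sat}(Y,\delta)=(Y\ft L(G))^{\wt{\delta}}$ by conjugating the fixed-point equation with $W_G$ and using $\sigma\circ\delta_G=\delta_G$, and the equality $(Y\dfcr_{\delta}G)^{\wh{\delta}}=(Y\ft L(G))^{\wt{\delta}}$ by interchanging the commuting fixed-point operations and using $\BLT^{\beta_G}=L(G)$. Your observation that one should only expect an inclusion $\delta(Y)\sub\mathrm{Sat}(Y,\delta)$ (pending the AP discussion) also matches the paper's intent exactly.
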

\begin{proof} Since $ \delta(Y)\sub \mathrm{Sat}(Y,\delta) $ is obvious (see Definition \ref{d2}) we only have to show the equalities $  \left( Y\dfcr_{\delta}G \right)^{\wh{\delta}}=\mathrm{Sat}(Y,\delta)=\left( Y\ft L(G) \right)^{\wt{\delta}}$. Suppose that $ Y $ is a w*-closed subspace of $ B(H) $ for some Hilbert space $ H $.
	
	We prove first that $ \mathrm{Sat}(Y,\delta)=\left( Y\ft L(G) \right)^{\wt{\delta}} $. Indeed, for any $ x\in Y\ft L(G) $, we have:
	\begin{align*} 
	x\in \left( Y\ft L(G) \right)^{\wt{\delta}}&\iff \wt{\delta}(x)=x\ot 1\\
	&\iff (\Id_{Y}\ot \Ad W_{G})\circ (\Id_{Y}\ot\sigma)\circ(\delta\ot \Id_{\BLT})(x)=x\ot 1\\  
	&\iff(\delta\ot \Id_{L(G)})(x)=(\Id_{Y}\ot\sigma)\left( (1_{H}\ot W_{G}^{*})(x\ot 1)(1_{H}\ot W_{G})\right)\\
	&\iff  (\delta\ot \Id_{L(G)})(x)=(\Id_{Y}\ot\sigma)\circ(\Id_{Y}\ot\delta_{G})(x)\\
	&\iff (\delta\ot \Id_{L(G)})(x)=(\Id_{Y}\ot\delta_{G})(x)\\
	&\iff x\in \mathrm{Sat}(Y,\delta),
	\end{align*}	
	where for the fourth equivalence above we used the fact that $ \sigma\circ\delta_{G}=\delta_{G} $ since $ \delta_{G}(\lambda_{s})=\lambda_{s}\ot\lambda_{s} $ for all $ s\in G $.
	
	It remains to prove that $ \left( Y\dfcr_{\delta}G \right)^{\wh{\delta}}=\left( Y\ft L(G) \right)^{\wt{\delta}} $. Indeed, since the actions $ \Id_{Y}\ot \beta_{G} $ and $ \wt{\delta} $ commute (see Proposition \ref{pro4.2}) and $ \wh{\delta}=(\Id_{Y}\ot\beta_{G})|_{Y\dfcr_{\delta}G} $ it follows: 
	\begin{align*} 
	\left( Y\dfcr_{\delta}G \right)^{\wh{\delta}}&= \left( \left( Y\vt \BLT \right)^{\wt{\delta}}\right)^{\Id_{Y}\ot\beta_{G}}\\
	&=\left( \left( Y\vt \BLT \right)^{\Id_{Y}\ot\beta_{G}}\right)^{\wt{\delta}}\\
	&=  \left( Y\ft \left( \BLT\right) ^{\beta_{G}}\right) ^{\wt{\delta}}\\
	&=\left( Y\ft L(G) \right)^{\wt{\delta}}.
	\end{align*}
	The last equality follows from the fact that $ \BLT^{\beta_{G}}=L(G) $.	
\end{proof}

\begin{remark}\label{remW} There is a selfadjoint unitary operator $ \Lambda \in\BLT $, such that
	\[\Lambda\rho_{t}\Lambda=\lambda_{t},\qquad t\in G, \]
	namely \[\Lambda\xi(s)=\Delta_{G}(s)^{-1/2}\xi(s^{-1}),\qquad s\in G,\ \xi\in\LT. \]
	We put  \[W_{\Lambda}:=(1\ot \Lambda)W_{G}, \] that is  \[W_{\Lambda}\xi(s,t)=\Delta_{G}(t)^{-1/2}\xi(s,st^{-1})\qquad s,t\in G,\ \xi\in L^{2}(G\times G). \] 
	
	It is easy to verify that \[U_{G}W_{\Lambda}S=W_{G},\]  where $ S\xi(s,t)=\xi(t,s) $ is the flip operator on $ L^{2}(G\times G) $. Also, $$ W_{\Lambda}\in \LI\vt\BLT, $$ because $ W_{G}\in \LI\vt L(G) $.	
\end{remark}

\begin{pro}\label{p10} Let $ (Y,\delta) $ be an $ L(G) $-comodule  and suppose that $ Y $ is a w*-closed subspace of $ B(H) $ for some Hilbert space $ H $. Then we have:
	\[Y\dfcr_{\delta}G=\wsp\left\lbrace (\mathbb{C}1_{H}\vt\LI)\left(Y\dfcr_{\delta}G\right)^{\wh{\delta}} \right\rbrace . \]	
\end{pro}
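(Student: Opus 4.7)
The plan is to exploit the fact that $(Z, \wh{\delta})$, with $Z := Y\dfcr_{\delta}G$, is naturally an $\LI$-subcomodule of the canonical $\LI$-comodule $(Y \vt \BLT, \Id_Y \ot \beta_G)$: this follows from Proposition \ref{pro4.2} (commutativity of $\wt{\delta}$ with $\Id_Y \ot \beta_G$) together with Lemma \ref{lem2.3}, and the restricted action is precisely $\wh{\delta}$. By Lemma \ref{c2}, $(Z, \wh{\delta})$ is therefore non-degenerate and saturated, while Proposition \ref{p11} identifies $Z^{\wh{\delta}}$ with $\mathrm{Sat}(Y, \delta)$.

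One containment is immediate: by Proposition \ref{pro2.13i}, $Z$ is stable under left multiplication by $\CI \vt \LI$, and $Z^{\wh{\delta}} \subseteq Z$, so $\wsp\{(\CI \vt \LI) Z^{\wh{\delta}}\} \subseteq Z$.

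For the reverse inclusion, I would adapt the argument from the proof of Lemma \ref{c2}. That proof produces, for an $\LI$-comodule $(X, \alpha)$ with $X \subseteq B(K)$ w*-closed, a w*-bicontinuous completely isometric automorphism $\theta$ of $X \vt \LI$ with $\theta(x \ot 1) = \alpha(x)$ and $\theta((1_K \ot h)y) = (1_K \ot h)\theta(y)$ for $h \in \LI$, and concludes
\[X \vt \LI = \wsp\{(1_K \ot h)\alpha(x) : x \in X, h \in \LI\}.\]
In the present setting the analogous output must describe $Z$ itself (rather than $Z \vt \LI$) in terms of $Z^{\wh{\delta}}$. I expect the correct tool to be the unitary $W_{\Lambda} = (1 \ot \Lambda) W_G \in \LI \vt \BLT$ of Remark \ref{remW} together with the identity $U_G W_{\Lambda} S = W_G$: since $\wh{\delta}$ is implemented on $Z$ via $U_G \in R(G) \vt \LI$, conjugation by an operator built from $1 \ot W_{\Lambda}$ should convert $z \ot 1$ into an element lying in $(\CI \vt \LI)\cdot\mathrm{Sat}(Y,\delta) \ot (\text{something})$, so that after slicing and using the intertwining of left $\CI \vt \LI$-multiplication, every $z \in Z$ is realized as a w*-limit of sums $(1 \ot f)\zeta$ with $f \in \LI$ and $\zeta \in Z^{\wh{\delta}}$.

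The main obstacle is carrying out this explicit untwisting while keeping track of the various tensor slots: one needs the conjugation to send $Z$ into the precise subspace $\wsp\{(\CI \vt \LI) Z^{\wh{\delta}}\}$ rather than into some larger subspace of $Y \vt \BLT$, and one must verify that the commutation relations between $W_G$, $U_G$, $W_{\Lambda}$, $S$ and the elements of $\LI$ and $L(G)$ combine so as to preserve membership in the $\wt{\delta}$-fixed point algebra at each step, paralleling the tensor-algebraic identities carried out in the proof of Proposition \ref{pro4.2}.
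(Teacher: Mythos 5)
Your high-level plan matches the paper's proof: the paper also sets $Z:=Y\dfcr_{\delta}G$, views $(Z,\wh{\delta})$ as an $\LI$-comodule via Proposition \ref{pro4.2} and Lemma \ref{lem2.3}, invokes Lemma \ref{c2} for non-degeneracy, and uses the unitary $W:=1_{H}\ot W_{\Lambda}$ to perform the untwisting. The easy containment you give is also correct. However, what you have written is a plan rather than a proof: the decisive step is exactly the one you defer with ``I expect'' and ``should convert'', and you acknowledge it as ``the main obstacle''. Concretely, the paper's argument hinges on the identity $\bar{\alpha}(W)=(W\ot 1)(1\ot U_{G}^{*})$, where $\alpha=\Id_{B(H)}\ot\beta_{G}$, $\wt{\alpha}=(\Id\ot\Ad U_{G}^{*})\circ(\Id\ot\sigma)\circ(\alpha\ot\Id)$ and $\bar{\alpha}=(\Id\ot\sigma)\circ(\alpha\ot\Id)$; this identity shows that $\Ad W$ is an $\LI$-comodule isomorphism from $(Z\vt\BLT,\wt{\alpha})$ onto $(Z\vt\BLT,\bar{\alpha})$ and hence carries the \emph{double} Fubini crossed product $Z\fcr_{\wh{\delta}}G=(Z\vt\BLT)^{\wt{\alpha}}$ onto $(Z\vt\BLT)^{\bar{\alpha}}=Z^{\wh{\delta}}\vt\BLT$. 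You never verify this (or any equivalent) commutation identity, so the claim that conjugation lands in the precise subspace you need is unsupported.

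There is also a structural piece missing from your sketch: non-degeneracy of $(Z,\wh{\delta})$ is not applied to individual elements $z\in Z$ but at the level of $Z\vt\BLT$. The paper first writes $Z\vt\BLT=\wsp\{(\CIK\vt\BLT)\wh{\delta}(Z)\}$, then uses $\BLT=\wsp\{\LI\, L(G)\}$ together with $(\CIK\vt L(G))\wh{\delta}(Z)\sub Z\fcr_{\wh{\delta}}G$ to get $Z\vt\BLT\sub\wsp\{(\CIK\vt\LI)(Z\fcr_{\wh{\delta}}G)\}$, and only then conjugates by $W$ to replace $Z\fcr_{\wh{\delta}}G$ by $Z^{\wh{\delta}}\vt\BLT$; the conclusion follows by cancelling the last $\BLT$ factor from $Z\vt\BLT=\left(\wsp\{(\CI\vt\LI)Z^{\wh{\delta}}\}\right)\vt\BLT$. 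Your proposal does not mention the double crossed product at all, and without routing the argument through it the ``slicing'' you describe has nothing to slice from. So the ingredients you name are the right ones, but the proof is genuinely incomplete until you establish the intertwining identity for $\Ad W$ and insert the double-crossed-product step.
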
	
\begin{proof} First put $ K:=H\ot\LT $ and $ X:=Y\dfcr_{\delta}G $. Then $ (X,\wh{\delta}) $ is an $ \LI $-subcomodule of $ (B(K),\alpha) $, where $ \alpha:=\mathrm{id}_{B(H)}\ot \beta_{G}\colon B(K)\to B(K)\vt\LI $.
	
	Consider the $ \LI $-actions \[\wt{\alpha},\ \bar{\alpha} \colon  B(K)\vt\BLT\to B(K)\vt \BLT\vt\LI\]
	defined by
	\[\wt{\alpha}=( \Id _{B(K)} \ot \Ad U_{G}^{*})\circ (\Id_{B(K)}\ot\sigma)\circ(\alpha\ot\Id_{\BLT}) \]
	and \[\bar{\alpha}= (\Id_{B(K)}\ot\sigma)\circ(\alpha\ot\Id_{\BLT}). \]	
	Recall the unitary $ W_{\Lambda} $ with $ W_{\Lambda}\xi(s,t)=\Delta_{G}(t)^{-1/2}\xi(s,st^{-1}) $ and put  \[W:=1_{H}\ot W_{\Lambda}. \] 
	Since $ W_{\Lambda}\in \LI\vt \BLT$, it follows that  \[W\in \mathbb{C}1_{H}\vt\LI\vt\BLT\sub B(K)\vt\BLT.\] 
	
	\paragraph*{\textbf{Claim:}} The normal *-automorphism \[\Ad W\colon B(K)\vt\BLT\to B(K)\vt\BLT\] is an $ \LI $-comodule isomorphism from $ (B(K)\vt\BLT,\wt{\alpha}) $ onto\\ $ (B(K)\vt\BLT,\bar{\alpha}) $, that is:
	\begin{equation}\label{eq4} 
	\bar{\alpha}\circ\Ad W=(\Ad W\ot\Id_{\LI})\circ\wt{\alpha}.
	\end{equation}
	
	\paragraph*{\emph{Proof of the Claim:}} In order to prove \eqref{eq4} we show first the following 
	\begin{equation}\label{eq5} 
	\bar{\alpha}(W)=(W\otimes1_{\LT})(1_{K}\otimes U_{G}^{*}).
	\end{equation}
	Let $ S\in \BLT\vt\BLT $ denote the flip operator, i.e. $ S(\xi\ot\eta)=\eta\ot\xi $ and thus $ \Ad S=\sigma $. For any $ a\in B(H) $ and $ b, c\in \BLT $ we have:
	\begin{align*} 
	\bar{\alpha}(a\ot  b\ot c)=&\ (\Id_{B(K)}\ot \sigma)(\alpha(a\ot b)\ot c)\\
	=&\ (\Id_{B(K)}\ot \sigma)(a\ot \beta_{G}(b)\ot c)\\
	=&\ (\Id_{B(K)}\ot \sigma)(a\ot (U_{G}^{*}(b\ot 1)U_{G})\ot c)\\
	=&\ (1\ot 1\ot S)(1\ot U_{G}^{*}\ot1)(a\ot b\ot 1\ot c)(1\ot U_{G}\ot1)(1\ot 1\ot S)\\
	=&\ (1\ot 1\ot S)(1\ot U_{G}^{*}\ot1)(1\ot 1\ot S)(a\ot b\ot c\ot 1)(1\ot 1\ot S)\\
	&\ (1\ot U_{G}\ot1)(1\ot 1\ot S).
	\end{align*}  
	It follows that 
	\begin{align*} 
	\bar{\alpha}(W)=&(1\ot 1\ot S)(1\ot U_{G}^{*}\ot1)(1\ot 1\ot S)(W\ot 1)(1\ot 1\ot S)\\
	&(1\ot U_{G}\ot1)(1\ot 1\ot S)\\
	=&(1\ot 1\ot S)(1\ot U_{G}^{*}\ot1)(1\ot 1\ot S)(1\ot W_{\Lambda}\ot 1)(1\ot 1\ot S)\\
	&(1\ot U_{G}\ot1)(1\ot 1\ot S)\\
	\end{align*}
	and therefore \eqref{eq5} is equivalent to the following
	\[ ( 1\ot S)( U_{G}^{*}\ot1)( 1\ot S)( W_{\Lambda}\ot 1)( 1\ot S)( U_{G}\ot1)( 1\ot S)=( W_{\Lambda}\ot 1)(1\ot U_{G}^{*}), \]
	which can be easily checked by computation. Thus \eqref{eq5} is proved.
	
	Now \eqref{eq4} follows immediately from \eqref{eq5} since, for any $ T\in B(K)\vt\BLT $, we have
	\begin{align*} 
	(\bar{\alpha}\circ \mathrm{Ad }W)(T)&=\bar{\alpha}(W)\bar{\alpha}(T)\bar{\alpha}(W)^{*}\\&=(W\otimes1)(1\otimes U_{G}^{*})\bar{\alpha}(T)(1\otimes U_{G})(W^{*}\otimes 1)\\
	&=(\mathrm{Ad }W\otimes \mathrm{id }_{\LI})\circ(\mathrm{id }_{B(K)}\otimes \mathrm{Ad }U^{*}_{G})\circ\bar{\alpha}(T)\\
	&=(\mathrm{Ad }W\otimes \mathrm{id }_{\LI})\circ\wt{\alpha}(T). 
	\end{align*}
	Thus the Claim is proved.
	
	By Corollary \ref{c2}, $ (X,\alpha) $ is non-degenerate, that is:
	\[X\vt \BLT=\wsp\{(\mathbb{C}1_{K}\vt\BLT)\alpha(X) \} \] 
	and therefore we get:
	\begin{align*} 
	X\vt \BLT&\sub\wsp\{(\mathbb{C}1_{K}\vt\LI)(\mathbb{C}1_{K}\vt L(G))\alpha(X) \}\\
	&\sub\wsp\{(\mathbb{C}1_{K}\vt\LI)(X\fcr_{\alpha}G) \},
	\end{align*}
	because  $ \BLT=\wsp\{\LI L(G) \} $ and $ (\mathbb{C}1_{K}\vt L(G))\alpha(X)\sub X\scr_{\alpha}G$ $\sub X\fcr_{\alpha}G $.  
	
	Since $ W\in \mathbb{C}1_{H}\vt\LI\vt\BLT $ and $ X $ is a $ \mathbb{C}1_{H}\vt\LI $-module, we have that $ \Ad W $ maps $ X\vt\BLT $ onto itself and therefore we get:
	\begin{align*} X\vt\BLT&=W(X\vt\BLT)W^{*}\\&\sub\wsp\{W(\mathbb{C}1_{K}\vt\LI)W^{*}W(X\fcr_{\alpha}G)W^{*} \}. \end{align*}	
	
	Also, $ X\vt \BLT $ is an $ \LI $-subcomodule of both $ (B(K)\vt\BLT,\wt{\alpha}) $ and $ (B(K)\vt\BLT,\bar{\alpha})  $ and thus it follows from \eqref{eq4} and the \textbf{Claim} that the restriction of $ \Ad W $ to $ X\vt\BLT $ is an $ \LI $-comodule isomorphism from $ (X\vt\BLT,\wt{\alpha}) $ onto $ (X\vt\BLT,\bar{\alpha}) $. Therefore, it maps the fixed point subspace $ X\fcr_{\alpha}G=(X\vt\BLT)^{\wt{\alpha}} $ onto $ (X\vt\BLT)^{\bar{\alpha}}=X^{\alpha}\vt\BLT $.
	
	On the other hand, we have  \[W(\mathbb{C}1_{K}\vt\LI)W^{*} \sub \mathbb{C}1_{H} \vt \LI \vt \BLT,\] since $ W\in\mathbb{C}1_{H} \vt\LI$ $\vt\BLT $.	
	Therefore, it follows that: 
	\begin{align*} 
	X\vt\BLT&\sub\wsp\{(\mathbb{C}1_{H} \vt\LI\vt \BLT)(X^{\alpha}\vt\BLT) \}\\
	&\sub\left( \wsp\{(\mathbb{C}1_{H} \vt\LI)X^{\alpha}\}\right) \vt \BLT.
	\end{align*}
	Since the reverse inclusion  is obvious we get that \[ X\vt\BLT=\left( \wsp\{(\mathbb{C}1_{H} \vt\LI)X^{\alpha}\}\right) \vt \BLT \] and therefore $ X=\wsp\{(\mathbb{C}1_{H} \vt\LI)X^{\alpha}\} $, that is:
	\[ Y\dfcr_{\delta}G=\wsp\left\lbrace (\mathbb{C}1_{H}\vt\LI)\left(Y\dfcr_{\delta}G\right)^{\wh{\delta}} \right\rbrace .\]
\end{proof}

\begin{thm}\label{thm1} For every $ L(G) $-comodule $ (Y,\delta) $ we have:
	\[Y\dfcr_{\delta}G=Y\dscr_{\delta}G. \]	
\end{thm}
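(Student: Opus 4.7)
By Proposition \ref{pro2.13i} we already have $Y\dscr_\delta G\sub Y\dfcr_\delta G$, so only the reverse inclusion requires argument. My plan is to reduce the theorem directly to Proposition \ref{p10} via a short chain of implications. Combining Propositions \ref{p10} and \ref{p11} gives
\[Y\dfcr_\delta G=\wsp\bigl\{(\CI\vt\LI)\,\mathrm{Sat}(Y,\delta)\bigr\},\]
and since $Y\dscr_\delta G$ is a w*-closed left $\CI\vt\LI$-module by Proposition \ref{pro2.13i}, the reverse inclusion collapses to the single assertion
\[\mathrm{Sat}(Y,\delta)\sub Y\dscr_\delta G. \qquad(\ast)\]

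To establish $(\ast)$, I would revisit the conjugation argument at the core of the proof of Proposition \ref{p10}. That argument introduces $W=1_H\ot W_\Lambda\in\CI\vt\LI\vt\BLT$ and shows that $\Ad W$ is an $\LI$-comodule isomorphism from $\wt{\wh\delta}$ onto $\bar{\wh\delta}$, so that it carries the Fubini crossed product $(Y\dfcr_\delta G)\fcr_{\wh\delta}G$ onto $\mathrm{Sat}(Y,\delta)\vt\BLT$. Running this identification backwards, each $z\in\mathrm{Sat}(Y,\delta)$, padded with an arbitrary $\BLT$-factor, corresponds under $\Ad W^{*}$ to an element of $(Y\dfcr_\delta G)\fcr_{\wh\delta}G$; and the latter space is generated, via the non-degeneracy step of the proof of Proposition \ref{p10}, by elements of the form $(1\ot\xi)\wh\delta(x)$ with $\xi\in L(G)$ and $x\in Y\dfcr_\delta G$ (arising from the density $\BLT=\wsp\{\LI L(G)\}$ combined with Lemma \ref{c2}). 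The intertwining relation $U_G W_\Lambda S=W_G$ of Remark \ref{remW}, together with the fact that $W_G$ implements $\delta_G$, is exactly what allows conjugation by $W_\Lambda$ to convert the $1\ot L(G)$-left multiplications into honest $\delta$-factors. Tracking this conversion through, each element of $\mathrm{Sat}(Y,\delta)\vt\BLT$ becomes a w*-limit of elements of the form $\bigl((1\ot f)\delta(y)(1\ot g)\bigr)\ot b$ with $f,g\in\LI$, $y\in Y$, and $b\in\BLT$, so slicing in the trailing $\BLT$-factor places $\mathrm{Sat}(Y,\delta)$ in $\wsp\{(\CI\vt\LI)\delta(Y)(\CI\vt\LI)\}=Y\dscr_\delta G$.

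The principal obstacle is the tensor bookkeeping required to carry out this $\Ad W$-transport: multiple commuting tensor factors, the flip $\sigma$ on $\BLT\vt\BLT$, and the three fundamental unitaries $W_G$, $U_G$, $W_\Lambda$ must all be made to align. The algebraic identity $U_G W_\Lambda S=W_G$ is the precise input that makes this alignment work, translating the $L(G)$-generators produced by non-degeneracy into the $\delta(Y)$-generators of the spatial crossed product. This is what allows $(\ast)$ to be established unconditionally on $G$, in striking contrast with the situation for $\LI$-comodules, where the analogue of Theorem \ref{thm1} is equivalent to the approximation property.
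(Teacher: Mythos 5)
Your opening reduction is sound and coincides with the paper's: Propositions \ref{p10} and \ref{p11} give $Y\dfcr_{\delta}G=\wsp\{(\mathbb{C}1_{H}\vt\LI)\,\mathrm{Sat}(Y,\delta)\}$, and since $Y\dscr_{\delta}G$ is a w*-closed $\mathbb{C}1_{H}\vt\LI$-bimodule, everything rests on the inclusion $\mathrm{Sat}(Y,\delta)\sub Y\dscr_{\delta}G$. The gap is in how you propose to prove that inclusion. You assert that the double Fubini crossed product $(Y\dfcr_{\delta}G)\fcr_{\wh{\delta}}G$ is generated by elements $(1\ot\xi)\wh{\delta}(x)$ with $\xi\in L(G)$ and $x\in Y\dfcr_{\delta}G$, attributing this to the non-degeneracy step of Proposition \ref{p10}. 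That step only shows that the \emph{full tensor product} $(Y\dfcr_{\delta}G)\vt\BLT$ lies in $\wsp\{(\mathbb{C}1\vt\LI)\,((Y\dfcr_{\delta}G)\fcr_{\wh{\delta}}G)\}$; it provides no generating set for the Fubini crossed product itself. Worse, since $\wsp\{(1\ot\xi)\wh{\delta}(x):\xi\in L(G)\}\sub (Y\dfcr_{\delta}G)\scr_{\wh{\delta}}G$, your claim would force $(Y\dfcr_{\delta}G)\fcr_{\wh{\delta}}G=(Y\dfcr_{\delta}G)\scr_{\wh{\delta}}G$, and by Theorem \ref{thm4} the validity of this for all $L(G)$-comodules is \emph{equivalent} to $G$ having the AP; it fails, for instance, for $G=SL(3,\mathbb{Z})$. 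So the step is not merely unjustified --- it is false in general, and the subsequent ``tracking through $\Ad W$'' has no collection of generators to act on.

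The repair is to drop the conjugation transport and use the defining identity of the saturation space directly, which is what the paper does: for $x\in\mathrm{Sat}(Y,\delta)$ and $f\in\LI$ one computes
\begin{equation*}
(\Id_{B(H)}\ot\delta_{G})\bigl((1_{H}\ot f)x\bigr)=(1_{H}\ot f\ot 1)(\delta\ot\Id_{L(G)})(x)\in(Y\dscr_{\delta}G)\ft L(G),
\end{equation*}
because $(\delta\ot\Id_{L(G)})(x)\in\delta(Y)\ft L(G)$ and $(Y\dscr_{\delta}G)\ft L(G)$ is a $\mathbb{C}1_{H}\vt\LI\vt L(G)$-bimodule. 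Combined with your reduction this gives $(\Id\ot\delta_{G})(Y\dfcr_{\delta}G)\sub(Y\dscr_{\delta}G)\ft L(G)$; undoing the conjugation by $1_{H}\ot W_{G}\in\mathbb{C}1_{H}\vt\LI\vt L(G)$ and slicing the last factor yields $Y\dfcr_{\delta}G\sub Y\dscr_{\delta}G$. The only structural input is the saturation identity $(\Id\ot\delta_{G})(x)=(\delta\ot\Id)(x)$; no generating set for the double crossed product and no appeal to $U_{G}W_{\Lambda}S=W_{G}$ is needed.
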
 
\begin{proof} Since $ Y\dscr_{\delta}G\sub Y\dfcr_{\delta}G $ it suffices to prove that $ Y\dfcr_{\delta}G\sub Y\dscr_{\delta}G $. Suppose that $ Y $ is a w*-closed subspace of $ B(H) $ for some Hilbert space $ H $ and consider the following W*-$ L(G) $-action on $ B(H)\vt\BLT $:
	
	\[\varepsilon:=\Id_{B(H)}\ot\delta_{G}\colon B(H)\vt\BLT\to B(H)\vt\BLT\vt L(G)\]
	that is
	 \[\varepsilon(x)=(1_{H}\ot W_{G}^{*})(x\ot1)(1_{H}\ot W_{G}),\qquad x\in  B(H)\vt\BLT\]
	
	For any $ x\in \mathrm{Sat}(Y,\delta) $ and $ f\in\LI $ we have:
	\begin{align*} 
	\varepsilon((1_{H}\ot f)x)&=(\Id_{B(H)}\ot \delta_{G})((1_{H}\ot f)x)\\
	&=(1_{H}\ot f\ot 1_{\LT})(\Id_{B(H)}\ot \delta_{G})(x)\\
	&=(1_{H}\ot f\ot 1_{\LT})(\delta\ot \Id_{L(G)})(x),
	\end{align*}
	where the last equality is obtained by the definition of $ \mathrm{Sat}(Y,\delta) $. Since $ \mathrm{Sat}(Y,\delta)\sub Y\ft L(G) $, it follows that $ (\delta\ot \Id_{L(G)})(x)\in \delta(Y)\ft L(G) \sub \left( Y\dscr_{\delta}G \right)\ft L(G) $ and thus $ (1_{H}\ot f\ot 1_{\LT})(\delta\ot \Id_{L(G)})(x)\in \left( Y\dscr_{\delta}G \right)\ft L(G) $, because  $ \left( Y\dscr_{\delta}G \right)\ft L(G)  $ is  a $ \mathbb{C}1_{H}\vt \LI\vt L(G) $-bimodule since $ Y\dscr_{\delta}G $ is a $ \mathbb{C}1_{H}\vt \LI $-bimodule.
	
	 Thus we have proved that $ \varepsilon $ maps $ \wsp\{ (\mathbb{C}1_{H}\vt \LI)\mathrm{Sat}(Y,\delta)\}	 $ into the Fubini tensor product $ \left( Y\dscr_{\delta}G \right)\ft L(G) $ and so Proposition \ref{p10} and Proposition \ref{p11} imply that 
	\[\varepsilon\left(Y\dfcr_{\delta}G\right)\sub\left( Y\dscr_{\delta}G \right)\ft L(G) \]
	that is \[\left( Y\dfcr_{\delta}G\right) \vt\mathbb{C}1\sub(1_{H}\ot W_{G})\left( (Y\dscr_{\delta}G) \ft L(G)\right)(1_{H}\ot W_{G}^{*}). \] 
	
	On the other hand, $ 1_{H}\ot W_{G}\in \mathbb{C}1_{H}\vt \LI\vt L(G) $ and $ (Y\dscr_{\delta}G) \ft L(G) $ is a $ \mathbb{C}1_{H}\vt \LI\vt L(G) $-bimodule. Therefore we get that \[\left( Y\dfcr_{\delta}G\right) \vt\mathbb{C}1\sub (Y\dscr_{\delta}G) \ft L(G)\] and thus $ Y\dfcr_{\delta}G\sub Y\dscr_{\delta}G $. 	 
\end{proof}  

Theorem \ref{thm1} allows us to give the next definition in order to simplify our notation:
 
\begin{defin} For an $ L(G) $-comodule $ (Y,\delta) $, we will write $ Y\lt_{\delta}G $ instead of $ Y\dfcr_{\delta}G $ or $ Y\dscr_{\delta}G $.
\end{defin}

\subsection{The approximation property}

Note that the key to the proof of Proposition \ref{p10}, which yields Theorem \ref{thm1}, is that every $ \LI $-comodule is non-degenerate (see Lemma \ref{c2}) and thus $ (Y\dfcr_{\delta} G,\wh{\delta}) $ is non-degenerate. 

If the analogue of Lemma \ref{c2} for $ L(G) $-comodules were also valid, we would be able to prove the analogue of Proposition \ref{p10} for $ \LI $-comodules and consequently  that $ X\fcr_{\alpha}G=X\scr_{\alpha}G $ for any $ \LI $-comodule $ (X,\alpha) $.  However, this is not the case for every group $ G $. In fact, we will prove that every $ L(G) $-comodule is non-degenerate if and only if $ G $ has the approximation property of U. Haagerup and J. Kraus \cite{HK}. To this end we will need some preparation.

Following \cite{HK}, a complex-valued function $ u\colon G\to \mathbb{C} $ is called a \bfc{multiplier} for the Fourier algebra $ A(G) $ if the linear map $  m_{u}(v)=uv $
maps $ A(G) $ into $ A(G) $. For a multiplier $ u $ we denote by $ M_{u}\colon L(G)\to L(G) $ the dual  of $ m_{u} $. The function $ u $ is called a \bfc{completely bounded multiplier} if $ M_{u} $ is completely bounded. The space of all completely bounded multipliers is denoted by $ M_{0}A(G) $ and it is a Banach space with the norm $ ||u||_{M_{0}}=||M_{u}||_{cb} $. Moreover, $ A(G)\sub M_{0}A(G) $.

It is known that $ M_{0}A(G) $ is the dual Banach space of a certain completion $ Q(G) $ of $ \LO $.

We say that $ G $ has \bfc{the approximation property} (shortly AP) if there is a net $ \{u_{i}\}_{i\in I} $ in $ A(G) $, such that $ u_{i}\rightarrow1 $ in the $ \sigma(M_{0}A(G),Q(G)) $-topology.

For more details on completely bounded multipliers and the approximation property see for example  \cite{dCH}, \cite{CH} and \cite{HK}.

We will need the following theorem due to U. Haagerup and J. Kraus (see \cite[Proposition 1.7 and Theorem 1.9]{HK}). 

\begin{thm}[Haagerup-Kraus]\label{thm7.1} For a locally compact group $ G $, the following conditions are equivalent:
	\begin{itemize}
		\item[(i)] $ G $ has the AP;
		\item[(ii)] There is a net $ \{u_{i}\} $ in $ A(G) $ such that, for any von Neumann algebra $ N $, $ (\mathrm{id }_{N}\ot M_{u_{i}})(x)\longrightarrow x$ in the w*-topology for all $ x\in N\vt L(G) $;
		\item [(iii)] For some separable infinite dimensional Hilbert space $ K $ there is a net $ \{u_{i}\} $ in $ A(G) $ such that $ (\mathrm{id }_{B(K)}\ot M_{u_{i}})(x)\longrightarrow x$ in the w*-topology for all $ x\in B(K)\vt L(G) $.
	\end{itemize}	
\end{thm}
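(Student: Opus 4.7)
The plan is to prove the three implications $(i) \Rightarrow (ii) \Rightarrow (iii) \Rightarrow (i)$, using the defining property of the predual $Q(G)$ of $M_{0}A(G)$ as the crucial tool. Recall that $Q(G)$ is built precisely so that the natural pairings $u \mapsto \omega\bigl((\mathrm{id}_{N}\ot M_{u})(x)\bigr)$ coming from slicing elements of $N\vt L(G)$ by functionals $\omega \in (N\vt L(G))_{*}$ define elements of $Q(G)$, and the collection of such functionals spans a (dense) subspace of $Q(G)$.

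For $(i) \Rightarrow (ii)$, I would fix a net $\{u_{i}\} \subset A(G)$ that converges to $1$ in the $\sigma(M_{0}A(G),Q(G))$-topology, an arbitrary von Neumann algebra $N$, an element $x\in N\vt L(G)$ and a normal functional $\omega\in(N\vt L(G))_{*}$. By the construction of $Q(G)$, the assignment $\psi_{x,\omega}\colon u\mapsto \omega\bigl((\mathrm{id}_{N}\ot M_{u})(x)\bigr)$ is a well-defined element of $Q(G)$, so the AP assumption gives $\psi_{x,\omega}(u_{i}) \to \psi_{x,\omega}(1)=\omega(x)$, which is exactly the required w*-convergence $(\mathrm{id}_{N}\ot M_{u_{i}})(x)\to x$. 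The implication $(ii)\Rightarrow (iii)$ is then immediate by specializing to $N=B(K)$ for a fixed separable infinite-dimensional Hilbert space $K$.

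The implication $(iii)\Rightarrow (i)$ is the main obstacle and requires two ingredients. First, one has to upgrade the pointwise-w* hypothesis to a uniform norm bound on $\{u_{i}\}$ in $M_{0}A(G)$; this would be achieved by a Banach–Steinhaus/uniform-boundedness argument applied to the family of functionals $\{\psi_{x,\omega}\}$ on $M_{0}A(G)$, exploiting the fact that pointwise boundedness on a norming subspace of $Q(G)$ implies uniform boundedness of the net in $M_{0}A(G)=Q(G)^{*}$. Second, granting this bound, I would verify $\sigma(M_{0}A(G),Q(G))$-convergence by testing against a dense subspace of $Q(G)$: namely the span of functionals $\psi_{x,\omega}$ with $x\in B(K)\vt L(G)$ and $\omega\in (B(K)\vt L(G))_{*}$. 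Here is precisely where the separability and infinite-dimensionality of $K$ enter — these slice functionals coming from a single `sufficiently large' $B(K)$ should exhaust (a norm-dense subspace of) $Q(G)$ because every separable von Neumann algebra embeds weak-* into $B(K)$, and the resulting slice functionals realise every generator of $Q(G)$ up to norm approximation. The hypothesis in $(iii)$ then gives convergence on this dense subspace, and combined with the uniform bound this extends to all of $Q(G)$, yielding $(i)$.

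The technically delicate points in this plan are (a) identifying and using the correct universal property of $Q(G)$ making the slice functionals lie in it with controlled norms, and (b) proving that the slice functionals from a single separable infinite-dimensional $B(K)\vt L(G)$ are norm-dense in $Q(G)$; both depend on the explicit construction of $Q(G)$ in Haagerup–Kraus as a completion of $L^{1}(G)$ under an appropriate operator-space-type norm.
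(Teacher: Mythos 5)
The paper does not prove this statement at all: it is quoted verbatim from Haagerup--Kraus (\cite[Proposition 1.7 and Theorem 1.9]{HK}) and used as a black box, so there is no internal proof to compare against. Judged on its own merits, your outline of (i)$\implies$(ii)$\implies$(iii) is sound, provided one grants the (nontrivial, but standard) fact that each slice functional $\psi_{x,\omega}\colon u\mapsto\omega\bigl((\mathrm{id}_{N}\ot M_{u})(x)\bigr)$ lies in $Q(G)$; that is precisely the content of the cited results of \cite{HK}.

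The step (iii)$\implies$(i) as you describe it, however, has a genuine gap. Your ingredient (a) — upgrading pointwise w*-convergence to a uniform norm bound on $\{u_{i}\}$ in $M_{0}A(G)$ by Banach--Steinhaus — fails, because $\{u_{i}\}$ is a \emph{net}, not a sequence: a pointwise convergent net of functionals need not even be pointwise bounded (its ``initial segment'' is uncontrolled), so the uniform boundedness principle does not apply. Without that bound, convergence on a merely norm-dense subspace of $Q(G)$, which is all your ingredient (b) asserts, does not extend to all of $Q(G)$. The correct repair, and the actual mechanism in \cite{HK}, is to show that the slice functionals $\psi_{x,\omega}$ with $x\in B(K)\vt L(G)$ and $\omega\in\bigl(B(K)\vt L(G)\bigr)_{*}$ exhaust $Q(G)$ \emph{exactly}, not just densely: every element of $Q(G)$ is a countable sum $\sum_{n}\la M_{u}(T_{n}),\omega_{n}\ra$ with $\sum_{n}\|T_{n}\|\,\|\omega_{n}\|<\infty$, and the separable infinite-dimensional $K$ is used precisely to absorb such a sum into a single pair $(x,\omega)$. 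Once every $\psi\in Q(G)$ is literally of the form $\psi_{x,\omega}$, hypothesis (iii) gives $\la u_{i},\psi\ra\to\la 1,\psi\ra$ for every $\psi\in Q(G)$ directly, with the same net and with no boundedness assumption; alternatively one can argue via closures of the convex set $A(G)$ in two locally convex topologies having the same continuous dual. Either way, the exact (rather than approximate) identification of $Q(G)$ with these slice functionals is the missing idea, and the Banach--Steinhaus step should be discarded.
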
 

\begin{remark}\label{cor6.1} It was shown in \cite[Corollary II.1.5]{SVZ2} that if $ \delta\colon N\to N\vt L(G) $ is a W*-$ L(G) $-action on a von Neumann algebra $ N $, then for any $ x\in N $ and any $ k\in A(G) $, we have:
	\[(k\cdot x)\ot1_{\LT}\in\wsp\{(1_{N}\ot\lambda_{s})\delta(h\cdot k\cdot x):\ s\in G,\ h\in A(G) \}, \]
	where $ k\cdot x=(\mathrm{id }_{N}\ot k)(\delta(x)) $, for $ k\in A(G) $ and $ x\in N $.	
\end{remark}
Using this we can prove that the converse of Proposition \ref{p1} is true for the Hopf-von Neumann algebra $ (L(G),\delta_{G}) $.
\begin{cor}\label{cor6.2} Let $ (Y,\delta) $ be an $ L(G) $-comodule. Then, the following are equivalent:
	\begin{itemize}
		\item[(i)] $ Y=\wsp\{h\cdot y:\ h\in A(G),\ y\in Y \} $; 
		\item[(ii)] $ (Y,\delta) $ is non-degenerate;
		\item[(iii)] $ Y\vt \BLT=\wsp\{(1_{H}\ot b)\delta(y)(1_{H}\ot c):\ y\in Y,\ b,c\in \BLT \} $,
	\end{itemize}
	where $ h\cdot y=(\mathrm{id}_{Y}\ot h)(\delta(y)) $, for $ h\in A(G) $ and $ y\in Y $.		
\end{cor}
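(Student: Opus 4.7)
The strategy is to close the cycle (ii) $\Rightarrow$ (iii) $\Rightarrow$ (i) $\Rightarrow$ (ii); together with Proposition \ref{p1} (which already gives (ii) $\Rightarrow$ (i)) this establishes all three equivalences. The implication (ii) $\Rightarrow$ (iii) is immediate: specialising $c=1_{L^{2}(G)}$ in the right-hand side of (iii) shows it contains the span appearing in (ii), while both sides are contained in $Y\vt\BLT$ because $\delta(Y)\sub Y\vt L(G)$ and $Y\vt\BLT$ is a $\CI\vt\BLT$-bimodule (Remark \ref{rem5}).

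For (iii) $\Rightarrow$ (i) I would argue by Hahn--Banach, in the spirit of Proposition \ref{p1}. Take $\phi\in Y_{*}$ vanishing on every $h\cdot y$ with $h\in A(G)$ and $y\in Y$, and set $\xi_{y}:=(\phi\ot\Id_{L(G)})(\delta(y))\in L(G)$. The hypothesis rewrites as $\la h,\xi_{y}\ra=\phi(h\cdot y)=0$ for every $h\in A(G)=L(G)_{*}$, forcing $\xi_{y}=0$ for each $y\in Y$. Since slice maps are $\BLT$-bimodule maps,
\[(\phi\ot\Id_{\BLT})\bigl((1_{H}\ot b)\delta(y)(1_{H}\ot c)\bigr)=b\,\xi_{y}\,c=0\]
for all $b,c\in\BLT$, so $(\phi\ot\Id_{\BLT})$ vanishes on the span appearing in (iii), hence on all of $Y\vt\BLT$ by hypothesis. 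Evaluating at $y\ot 1$ yields $\phi(y)=0$ for every $y\in Y$, giving (i).

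The core of the proof is (i) $\Rightarrow$ (ii), and the plan is to invoke Remark \ref{cor6.1}. By Remark \ref{rem1} together with Remark \ref{rem8}, I may assume without loss of generality that $Y$ is realised as an $L(G)$-subcomodule of the canonical W*-$L(G)$-comodule $N:=B(H_{0})\vt L(G)$ with action $\varepsilon:=\Id_{B(H_{0})}\ot\delta_{G}$; in this realisation the ambient Hilbert space for $Y$ is $H=H_{0}\ot L^{2}(G)$ and $1_{N}=1_{H}$. Applied to $y\in Y\sub N$, Remark \ref{cor6.1} gives, for every $k\in A(G)$,
\[(k\cdot y)\ot 1 \in \wsp\{(1_{H}\ot\lambda_{s})\delta(h\cdot k\cdot y):s\in G,\ h\in A(G)\},\]
which sits inside the right-hand side of (ii) since $\lambda_{s}\in L(G)\sub\BLT$ and $h\cdot k\cdot y\in Y$. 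Combining with (i) and the w*-closedness of the right-hand side of (ii), it follows that $y\ot 1$ lies in the right-hand side of (ii) for every $y\in Y$; left multiplication by $1_{H}\ot b$ extends this to every elementary tensor $y\ot b$, and these w*-densely span $Y\vt\BLT$.

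The only real obstacle is precisely this final step: (i) is a Banach-module density statement inside $Y$, whereas (ii) is an operator-space non-degeneracy condition against the considerably larger algebra $\BLT$. The bridge is Remark \ref{cor6.1}, whose W*-approximation mechanism supplies the $\lambda_{s}$-translates of $\delta(h\cdot k\cdot y)$ needed to approximate $(k\cdot y)\ot 1$, and which is unavailable for an abstract $L(G)$-comodule without first passing to the canonical W*-ambient via Remark \ref{rem1}.
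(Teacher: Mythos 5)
Your proposal is correct and follows essentially the same route as the paper: the implication (ii)$\implies$(iii) is immediate, (iii)$\implies$(i) is the Hahn--Banach slice-map argument modelled on Proposition \ref{p1}, and the substantive step (i)$\implies$(ii) passes to the canonical W*-realisation via Remark \ref{rem1} and invokes Remark \ref{cor6.1} to place $z\ot 1$ in the non-degeneracy span before multiplying by $1\ot c$ using separate w*-continuity, exactly as in the paper's proof.
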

\begin{proof} The implication (iii)$ \implies $(i) can be proved in a similar manner as Proposition \ref{p1} and (ii)$ \implies $(iii) is obvious. Thus, it suffices to prove the implication (i)$ \implies $(ii).
	
	(i)$ \implies $(ii): Because of Remark \ref{rem1}, we may assume that $ (Y,\delta) $ is an $ L(G) $-subcomodule of a W*-$ L(G) $-comodule $ (N,\delta) $. Also, let us suppose that $ N $ acts on a Hilbert space $ K $. Since $ Y=\wsp\{h\cdot y:\ h\in A(G),\ y\in Y \} $, it follows  from \cite[Corollary II.1.5]{SVZ2} (see Remark \ref{cor6.1} above) that
	\[z\ot1_{\LT}\in\wsp\{(1_{K}\otimes b)\delta(y):\ b\in \BLT,\ y\in Y \}, \] 
	for any $ z\in Y $.	Therefore, for any $ z\in Y $ and $ c\in \BLT $, we get that
	\[ z\ot c=(1_{K}\ot c)(z\ot 1_{\LT})\in\wsp\{(1_{K}\otimes b)\delta(y):\ b\in \BLT,\ y\in Y \}, \] 
	because the multiplication in $ B(K)\vt\BLT $ is separately w*-continuous. Thus, we have that 
	\[Y\vt\BLT\sub\wsp\{(1_{K}\otimes b)\delta(y):\ b\in \BLT,\ y\in Y \}, \]
	which gives the desired equality since the reverse inclusion is trivial.
\end{proof}

\begin{remark}\label{re2}
	Observe that for any $ u,\ h\in A(G) $ and $ y\in L(G) $ we have:
	\begin{align*} 
	\la M_{u}(y), h\ra&=\la y, hu\ra\\
	&=\la\delta_{G}(y),h\ot u\ra\\
	&=\la(\mathrm{id}_{L(G)}\ot u)\circ\delta_{G}(y),h\ra,      
	\end{align*}
	therefore  \begin{equation}\label{eq1}
	M_{u}=(\mathrm{id}_{L(G)}\ot u)\circ\delta_{G},\quad \text{for all }  u\in A(G).
	\end{equation}
\end{remark}

\begin{pro}\label{p9} For a locally compact group $ G $ the following conditions are equivalent:
	\begin{itemize}	
		\item [(a)] $ G $ has the AP;
		\item[(b)] Every $ L(G) $-comodule is saturated;
		\item[(c)] For any  $ L(G) $-comodule $ (Y,\delta) $, any $ L(G) $-subcomodule $ Z $ of $ Y $  and any $ y\in Y $, we have that $ \delta(y)\in Z\ft L(G) $ implies $ y\in Z $;
		\item [(d)] For any  $ L(G) $-comodule $ (Y,\delta) $ and any $ y\in Y $, we have $ y\in \overline{A(G)\cdot y}^{\text{w*}} $;
		\item [(e)] There exists a net $ (u_{i})_{i\in I} $ in $ A(G) $ such that for any $ L(G) $-comodule $ (Y,\delta) $ and any $ y\in Y $ we have that $ u_{i}\cdot y\longrightarrow y $ ultraweakly;
		\item [(f)] Every $ L(G) $-comodule is non-degenerate.
	\end{itemize}	
\end{pro}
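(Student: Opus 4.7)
The plan is to close a cycle through all six conditions. The equivalences (b)$\iff$(c)$\iff$(d) follow at once from Proposition \ref{p8} applied to the Hopf-von Neumann algebra $(L(G),\delta_{G})$, whose predual is $A(G)$. The implication (f)$\implies$(b) is Corollary \ref{c1}. For (d)$\implies$(f), note that (d) gives $Y=\wsp\{A(G)\cdot Y\}$ for every $L(G)$-comodule $(Y,\delta)$, which is condition (i) of Corollary \ref{cor6.2} and hence forces $(Y,\delta)$ to be non-degenerate.

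For (a)$\iff$(e) I would invoke the Haagerup--Kraus Theorem \ref{thm7.1}. Assuming (a), pick a net $(u_{i})\subseteq A(G)$ as in Theorem \ref{thm7.1}(ii). Given any $L(G)$-comodule $(Y,\delta)$, Remark \ref{rem1} lets us realise $Y$ as an $L(G)$-subcomodule of $(B(H)\vt L(G),\Id_{B(H)}\ot\delta_{G})$ for some Hilbert space $H$. Using Remark \ref{re2} one computes $u_{i}\cdot y=(\Id_{B(H)}\ot M_{u_{i}})(y)$, so by Theorem \ref{thm7.1}(ii) applied with $N=B(H)$, this converges to $y$ in w* inside $B(H)\vt L(G)$, and hence inside $Y$ (which is w*-closed). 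Conversely, (e)$\implies$(a): specialising (e) to the canonical $L(G)$-comodule $(B(K)\vt L(G),\Id\ot\delta_{G})$ for $K$ separable infinite-dimensional yields exactly condition (iii) of Theorem \ref{thm7.1}. Trivially (e)$\implies$(d).

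The remaining, and main, step is (d)$\implies$(a), and here the obstacle is passing from the pointwise statement of (d)---for each $y$, some net in $A(G)\cdot y$ approximates $y$---to the uniform statement in Theorem \ref{thm7.1}(iii), which requires a single net working for all $y\in B(K)\vt L(G)$. My tactic is a matrix amplification. Fix $K$ separable infinite-dimensional, set $Y=B(K)\vt L(G)$ with its canonical $L(G)$-comodule structure, and, given any $y_{1},\ldots,y_{n}\in Y$, form $z=\mathrm{diag}(y_{1},\ldots,y_{n})\in M_{n}(Y)\cong B(K^{n})\vt L(G)$; since $K^{n}\cong K$, this is again a canonical $L(G)$-comodule. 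The $A(G)$-action is diagonal, $u\cdot z=\mathrm{diag}(u\cdot y_{1},\ldots,u\cdot y_{n})$, so applying (d) to $z$ and reading off the diagonal entries supplies \emph{simultaneous} w*-approximation of $y_{1},\ldots,y_{n}$ by a single $u\in A(G)$. A standard net construction indexed by finite subsets of $Y$ together with finite subsets of $Y_{*}$ then produces a net $(u_{\alpha})\subseteq A(G)$ with $(\Id_{B(K)}\ot M_{u_{\alpha}})(y)\to y$ in w* for every $y\in Y$, which is precisely Theorem \ref{thm7.1}(iii) and gives (a).
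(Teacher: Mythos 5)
Your proposal is correct and follows essentially the same route as the paper: Proposition \ref{p8} for (b)$\iff$(c)$\iff$(d), Corollaries \ref{c1} and \ref{cor6.2} for the links with non-degeneracy, the Haagerup--Kraus Theorem \ref{thm7.1} together with the identification $u\cdot y=(\Id\ot M_u)(y)$ on canonical comodules for (a)$\iff$(e), and the same finite direct-sum (diagonal) amplification plus the directed set of pairs (finite set, w*-neighbourhood) to upgrade the pointwise statement (d) to the uniform net of Theorem \ref{thm7.1}(iii). The only cosmetic difference is that you close (e)$\implies$(a) directly, whereas the paper routes it through (e)$\implies$(d)$\implies$(a); both are valid.
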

\begin{proof} The equivalence of (b), (c) and (d) follows immediately from Proposition \ref{p8}. The implication (f)$ \implies $(b) follows from Corollary \ref{c1} and the implication (d)$ \implies $(f) follows from Corollary \ref{cor6.2}. Also, (e)$ \implies $(d) is obvious. So, it suffices to prove the implications (a)$ \implies $(e) and (d)$ \implies $(a).
	
	(a)$ \implies $(e): Suppose that $ G $ has the AP. Then, by Theorem \ref{thm7.1}, there exists a net $ \{u_{i} \} $ in $ A(G)$ such that  \[(\mathrm{id}_{N}\ot M_{u_{i}})(z)\longrightarrow z\] ultraweakly, for any von Neumann algebra $ N $ and for all $ z\in N\vt L(G) $. 
	
	Let  $ (Y,\delta) $ be an $ L(G) $-comodule with $ Y $ being a w*-closed subspace of $ B(H) $ for some Hilbert space $ H $. First, we need to show that, for any $ u\in A(G) $, we have the following:
	\begin{equation}\label{eq2}
	(\mathrm{id}_{Y}\ot M_{u})\circ\delta=\delta\circ(\mathrm{id}_{Y}\ot u)\circ\delta.
	\end{equation}
	Indeed, it is not hard to see that 
	\[\delta\circ(\mathrm{id}_{Y}\ot u)=(\mathrm{id}_{Y}\ot \mathrm{id}_{L(G)}\ot u)\circ(\delta\ot \mathrm{id}_{L(G)}), \]
	thus, since $ (\delta\ot \mathrm{id}_{L(G)})\circ\delta=(\mathrm{id}_{Y}\ot\delta_{G}) \circ\delta $, we get:
	\begin{align*} \delta\circ(\mathrm{id}_{Y}\ot u)\circ\delta&=(\mathrm{id}_{Y}\ot \mathrm{id}_{L(G)}\ot u)\circ(\delta\ot \mathrm{id}_{L(G)})\circ\delta\\
	&=(\mathrm{id}_{Y}\ot \mathrm{id}_{L(G)}\ot u)\circ(\mathrm{id}_{Y}\ot\delta_{G}) \circ\delta\\
	&=\left[ \mathrm{id}_{Y}\ot\left(  (\mathrm{id}_{L(G)}\ot u)\circ\delta_{G}\right)  \right]\circ\delta\\
	&= (\mathrm{id}_{Y}\ot M_{u})\circ\delta,
	\end{align*}
	where the last equality follows from \eqref{eq1} (see Remark \ref{re2}).
	
	Now, for any $ y\in Y $ we have: \[(\mathrm{id}_{Y}\ot M_{u_{i}})(\delta(y))\longrightarrow \delta(y)\] ultraweakly, because $ \delta(Y)\sub Y\ft L(G)\sub B(H)\vt L(G) $. Thus, \eqref{eq2} implies that  \[\delta\circ(\mathrm{id}_{Y}\ot u_{i})\circ\delta(y)\longrightarrow \delta(y)\quad \text{ultraweakly.}\] On the other hand, $ \delta $ is a w*-continuous isometry, therefore it is a w*-w*- homeomorphism from $ Y $ onto $ \delta(Y) $ (by the Krein-Smulian theorem; see \cite{BLM} A.2.5) and thus $(\mathrm{id}_{Y}\ot u_{i})\circ\delta(y)\longrightarrow y$ ultraweakly, that is $ u_{i}\cdot y\longrightarrow y $ ultraweakly.	
	
	(d)$ \implies $(a): Assume that for any $ L(G) $-comodule $( Y,\delta) $ and any $ y\in Y $ we have that $ y\in \overline{A(G)\cdot y}^{\text{w*}} $. Let $ H $ be a Hilbert space. Thus taking  $ Y=B(H)\vt L(G)$ and $\delta=\Id_{B(H)}\ot\delta_{G} $ yields that for any $ y\in B(H)\vt L(G) $ there exists a net $ (u_{i}) $ in $ A(G) $ such that \[(\Id_{B(H)}\ot\Id_{L(G)}\ot u_{i})\circ(\Id_{B(H)}\ot\delta_{G})(y)\longrightarrow y \text{ ultraweakly}. \]
	Therefore, since 
	\begin{align*} 
	(\Id_{B(H)}\ot\Id_{L(G)}\ot u_{i})\circ(\Id_{B(H)}\ot\delta_{G})&= \mathrm{id}_{B(H)}\ot\left(  (\mathrm{id}_{L(G)}\ot u_{i})\circ\delta_{G}\right)\\
	&= \mathrm{id}_{B(H)}\ot M_{u_{i}},
	\end{align*}
	it follows that for any Hilbert space $ H $ and any $ y\in B(H)\vt L(G) $ there exists a net $ (u_{i}) $ in $ A(G) $ such that $ (\mathrm{id}_{B(H)}\ot M_{u_{i}})(y)\longrightarrow y $ ultraweakly.
	
	Now, consider a separable infinite dimensional Hilbert space $ K $ and let $ F=\{x_{1},\dots,x_{n}\} $ be a finite subset of $ B(K)\vt L(G) $. Then, $ x=x_{1}\oplus\dots\oplus x_{n} $ may be viewed as an element of $ B(K^{(n)})\vt L(G) $, where $ K^{(n)} $ is the direct sum of $ n $ copies of $ K $. Therefore, applying the above argument for $ K^{(n)} $ we get that there exists a net $ (u_{i}) $ in $ A(G) $ such that $ (\mathrm{id}_{B(K^{(n)})}\ot M_{u_{i}})(x)\longrightarrow x $ ultraweakly and thus it follows that $ (\mathrm{id}_{B(K)}\ot M_{u_{i}})(y)\longrightarrow y $ ultraweakly for all $ y\in F $. Therefore, if $ F $ is a finite subset of $ B(K)\vt L(G) $ and  $ \mathfrak{N} $ is an ultraweak neighborhood of $ 0 $, then there is an element $ u_{(F,\mathfrak{N})}\in A(G) $ such that 
	\[(\Id_{B(K)}\ot M_{u_{(F,\mathfrak{N})}})(y)\in y+\mathfrak{N},\quad\forall y\in F. \]
	So, the set of all pairs $ (F,\mathfrak{N}) $ becomes a directed set with the partial order defined by $ (F_{1},\mathfrak{N}_{1})\leq(F_{2},\mathfrak{N}_{2}) $ if $ F_{1}\sub F_{2} $ and $ \mathfrak{N}_{2}\leq\mathfrak{N}_{1} $ and it is clear that 
	\[(\Id_{B(K)}\ot M_{u_{(F,\mathfrak{N})}})(y)\longrightarrow y\ \text{ ultraweakly for all }y\in B(K)\vt L(G).\]
	Therefore, it follows from Theorem \ref{thm7.1} that $ G $ has the AP.	
\end{proof} 

\begin{remark}\label{rem6} According to Proposition \ref{p9}, if $ G $ satisfies the AP, then every $ L(G) $-comodule is saturated and non-degenerate.
	
On the other hand, if $ G $ does not satisfy the AP, then Proposition \ref{p9} guarantees the existence of $ L(G) $-comodules which are not saturated and the existence of $ L(G) $-comodules that are not non-degenerate. However, the author does not know any example of a group $ G $ without the AP such that there  exists a single $ L(G) $-comodule which is neither saturated nor non-degenerate. 

Furthermore, the next simple result suggests that such an example could never be isomorphic to the spatial or the Fubini crossed product of some $ \LI $-comodule.
\end{remark}

\begin{cor}\label{c10} For every $ \LI $-comodule $ (X,\alpha) $ the Fubini crossed product $ (X\fcr_{\alpha}G,\wh{\alpha}) $ is a saturated $ L(G) $-comodule and the spatial crossed product $ (X\scr_{\alpha}G,\wh{\alpha}) $ is a non-degenerate $ L(G) $-comodule.	
\end{cor}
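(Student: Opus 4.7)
The plan is to treat the two assertions independently, bringing to bear the commuting-actions machinery of Section \ref{sec3} for saturation and the Haagerup--Kraus style characterization of Corollary \ref{cor6.2} for non-degeneracy.

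For saturation of $(X\fcr_{\alpha}G,\wh{\alpha})$, I would realize this comodule as the fixed-point subspace of a saturated ambient $L(G)$-comodule under a commuting $\LI$-action. The pair $(\BLT,\delta_{G})$ is a W*-$L(G)$-comodule, and the Landstad--Str\v{a}til\v{a}--Voiculescu--Zsid\'{o} result recalled at the start of Section \ref{sec3} guarantees that every W*-$L(G)$-comodule is saturated; in particular $(\BLT,\delta_{G})$ is saturated. Since $\BLT$ is of type $I$ we have $X\vt\BLT=X\ft\BLT$, so Lemma \ref{l1} applied with $(Y,\beta)=(\BLT,\delta_{G})$ yields that $(X\vt\BLT,\Id_{X}\ot\delta_{G})$ is a saturated $L(G)$-comodule. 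By Proposition \ref{pro4.2i} the $\LI$-action $\wt{\alpha}$ and the $L(G)$-action $\Id_{X}\ot\delta_{G}$ commute on $X\vt\BLT$. Lemma \ref{l2} (with $\alpha_{1}=\wt{\alpha}$ and $\alpha_{2}=\Id_{X}\ot\delta_{G}$) then gives that the fixed-point space $(X\vt\BLT)^{\wt{\alpha}}=X\fcr_{\alpha}G$ endowed with the restricted $L(G)$-action is saturated, and by Definition \ref{def4.2} this restricted action is precisely $\wh{\alpha}$.

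For non-degeneracy of $(X\scr_{\alpha}G,\wh{\alpha})$, I would invoke the implication (i)$\Rightarrow$(ii) of Corollary \ref{cor6.2} and verify condition (i) by a direct computation on the generating set of $X\scr_{\alpha}G$. For a generator $y=(1_{H}\ot\lambda_{s})\alpha(x)(1_{H}\ot\lambda_{t})$, using that $\Id_{X}\ot\delta_{G}$ is a normal $*$-homomorphism, that $\delta_{G}(\lambda_{r})=\lambda_{r}\ot\lambda_{r}$, and that $\delta_{G}(f)=f\ot 1$ for $f\in\LI$ (so $(\Id_{X}\ot\delta_{G})(\alpha(x))=\alpha(x)\ot 1$, as $\alpha(x)\in X\vt\LI$), one computes
\[\wh{\alpha}(y)=y\ot\lambda_{st}.\]
Hence for every $h\in A(G)$ we obtain $h\cdot y:=(\Id\ot h)\wh{\alpha}(y)=h(st)\,y$. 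Choosing any $h\in A(G)$ with $h(st)=1$ (possible since $A(G)$ is a regular function algebra on $G$) gives $y=h\cdot y$, so every generator lies in $A(G)\cdot (X\scr_{\alpha}G)$. Taking w*-closed spans and using that $X\scr_{\alpha}G$ is an $L(G)$-subcomodule of $(X\fcr_{\alpha}G,\wh{\alpha})$ by Proposition \ref{pro2.13} (and hence $A(G)$-invariant), we conclude
\[X\scr_{\alpha}G=\wsp\{h\cdot z:\ h\in A(G),\ z\in X\scr_{\alpha}G\},\]
which by Corollary \ref{cor6.2} gives non-degeneracy of $(X\scr_{\alpha}G,\wh{\alpha})$.

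No serious obstacle is anticipated: the work is conceptually carried by Lemmas \ref{l1}--\ref{l2} and Corollary \ref{cor6.2}, all established earlier. The only point demanding care is the identity $\delta_{G}(f)=f\ot 1$ for $f\in\LI$ used in evaluating $\wh{\alpha}$ on the generators; this follows either from the formula $\delta_{G}(\cdot)=W_{G}^{*}(\cdot\ot 1)W_{G}$ together with $W_{G}\in\LI\vt L(G)$, or equivalently from the identity $\BLT^{\delta_{G}}=\LI$ recorded at the end of Section \ref{sec2}.
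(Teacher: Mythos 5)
Your argument is correct, and the saturation half coincides exactly with the paper's proof: both rest on the saturation of the W*-$L(G)$-comodule $(\BLT,\delta_{G})$, Lemma \ref{l1} applied to $(X\vt\BLT,\Id_{X}\ot\delta_{G})$, the commutation of $\wt{\alpha}$ with $\Id_{X}\ot\delta_{G}$ from Proposition \ref{pro4.2i}, and Lemma \ref{l2} for the fixed-point space. For non-degeneracy your route is a genuine (if small) variant. Both proofs hinge on the same key computation, namely that a generator $y=(1_{H}\ot\lambda_{s})\alpha(x)(1_{H}\ot\lambda_{t})$ satisfies $\wh{\alpha}(y)=y\ot\lambda_{st}$, using $(\Id\ot\delta_{G})(\alpha(x))=\alpha(x)\ot1$ and $\delta_{G}(\lambda_{r})=\lambda_{r}\ot\lambda_{r}$. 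The paper then concludes in one line by writing $y\ot b=(1_{H}\ot 1_{\LT}\ot b\lambda_{st}^{-1})\,\wh{\alpha}(y)$ for arbitrary $b\in\BLT$, which directly exhibits the generating set demanded by Definition \ref{d1}. You instead observe $h\cdot y=h(st)\,y$, pick $h\in A(G)$ with $h(st)=1$, and invoke the implication (i)$\Rightarrow$(ii) of Corollary \ref{cor6.2}; the closing step, that $A(G)\cdot(X\scr_{\alpha}G)\sub X\scr_{\alpha}G$ because $X\scr_{\alpha}G$ is an $L(G)$-subcomodule (Proposition \ref{pro2.13} plus Remark \ref{rem3}), is handled correctly. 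There is no circularity, as Corollary \ref{cor6.2} precedes this statement; the only cost of your route is that it is marginally less self-contained, since (i)$\Rightarrow$(ii) of Corollary \ref{cor6.2} rests on the Str\v{a}til\v{a}--Voiculescu--Zsid\'{o} result recalled in Remark \ref{cor6.1}, whereas the paper's direct identity needs no external input. What your approach buys in exchange is a cleaner conceptual statement: the generators are literally fixed (up to the scalar $h(st)$) by the $A(G)$-action, which makes condition (i) of Corollary \ref{cor6.2} transparent.
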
 
\begin{proof} Suppose that $ X $ is a w*-closed subspace of $ B(H) $ for some Hilbert space $ H $ and let $ K:= H\ot \LT $.
	
	First we show that $ (X\scr_{\alpha}G,\wh{\alpha}) $ is a non-degenerate $ L(G) $-comodule. We have that $ (X\scr_{\alpha}G)\vt\BLT $ is a $ \CIK\vt\BLT $-bimodule. Thus, since $ \wh{\alpha}(X\scr_{\alpha}G)\sub (X\scr_{\alpha}G)\ft L(G)\sub (X\scr_{\alpha}G)\vt\BLT $, we have the inclusion \[(X\scr_{\alpha}G)\vt\BLT\supseteq\wsp\{(1_{K}\ot b)\wh{\alpha}(y):\ b\in\BLT,\ y\in X\scr_{\alpha}G \}.\]	
	
	For the reverse inclusion, observe that for any $ s,t\in G $, $ x\in X $ and $b\in\BLT$, we have
	\begin{align*} 
	((1_{H}\ot\lambda_{s})\alpha(x)(1_{H}\ot\lambda_{t}))\ot b=&\ (1_{H}\ot1_{\LT}\ot b\lambda_{t}^{-1}\lambda_{s}^{-1})\\&\ (((1_{H}\ot\lambda_{s})\alpha(x)(1_{H}\ot\lambda_{t}))\ot\lambda_{st})\\
	=&\ (1_{H}\ot1_{\LT}\ot b\lambda_{st}^{-1})(1_{H}\ot\lambda_{s}\ot\lambda_{s})\\&\ (\alpha(x)\ot1_{\LT})(1_{H}\ot\lambda_{t}\ot\lambda_{t}) \\
	=&\ (1_{H}\ot1_{\LT}\ot b\lambda_{st}^{-1})(\Id_{B(H)}\ot\delta_{G})\\&\ ((1_{H}\ot\lambda_{s})\alpha(x)(1_{H}\ot\lambda_{t}))\\
	=&\ (1_{H}\ot1_{\LT}\ot b\lambda_{st}^{-1})\wh{\alpha}((1_{H}\ot\lambda_{s})\alpha(x)(1_{H}\ot\lambda_{t})),
	\end{align*} 
	since \[(\Id_{B(H)}\ot\delta_{G})(1_{H}\ot\lambda_{s})=1_{H}\ot\lambda_{s}\ot\lambda_{s}\text{ and }(\Id_{B(H)}\ot\delta_{G})(\alpha(x))=\alpha(x)\ot 1. \]
	Therefore, we get \[((1_{H}\ot\lambda_{s})\alpha(x)(1_{H}\ot\lambda_{t}))\ot b\in\wsp\{(1_{K}\ot c) \wh{\alpha}(y):\ c\in\BLT,\ y\in X\scr_{\alpha}G \}.\] 
	
	Since $ (X\scr_{\alpha}G)\vt\BLT $ is in the w*-closed linear span of the elements of the form $ ((1_{H}\ot\lambda_{s})\alpha(x)(1_{H}\ot\lambda_{t}))\ot b $, we obtain the desired inclusion and so $ (X\scr_{\alpha}G,\wh{\alpha}) $ is non-degenerate.
	
	For the Fubini crossed product, note that by Lemma \ref{l1} $ (X\vt\BLT,\Id_{X}\ot\delta_{G}) $ is saturated, because $ (\BLT,\delta_{G}) $ is saturated (see Remark \ref{rem7}). Thus, since the actions $ \wt{\alpha} $ and $ \Id_{X}\ot\delta_{G} $ on $ X\vt\BLT $ commute and $ X\fcr_{\alpha}G=\left(X\vt\BLT \right)^{\wt{\alpha}}  $ and $ \wh{\alpha}=(\Id_{X}\ot\delta_{G})|_{X\fcr_{\alpha}G } $, by Lemma \ref{l2}  it follows that $(X\fcr_{\alpha}G,\wh{\alpha} ) $ is saturated.		
\end{proof}  

\section{Takesaki-type duality for crossed products}\label{sec5}

Recall that every $ \LI $-comodule is non-degenerate and saturated (by Lemma \ref{c2}). Using the non-degeneracy we will obtain the Takesaki-duality for the spatial crossed product, i.e.
\[\left(X\scr_{\alpha}G \right)\lt_{\wh{\alpha}}G\simeq X\vt\BLT,  \]
whereas the saturation of $ (X,\alpha) $ yields the same for the Fubini crossed product, that is
\[\left(X\fcr_{\alpha}G \right)\lt_{\wh{\alpha}}G\simeq X\vt\BLT\]
(see Propositions \ref{p14} and \ref{p16} below).

The same ideas can be used to show that an $ L(G) $-comodule $ (Y,\delta) $ is non-degenerate if and only if $$ \left(Y\lt_{\delta}G \right)\scr_{\wh{\delta}}G\simeq Y\vt\BLT  ,$$ whereas $ (Y,\delta) $ is saturated if and only if $$ \left(Y\lt_{\delta}G \right)\fcr_{\wh{\delta}}G\simeq Y\vt\BLT  $$ (see Propositions \ref{p4} and \ref{p15}).

As a consequence we get our two main results. The first one (Theorem \ref{thm2}) states that for a fixed $ \LI $-comodule $ (X,\alpha) $ the equality $ X\fcr_{\alpha}G=X\scr_{\alpha}G $ holds if and only if $ (X\fcr_{\alpha}G,\wh{\alpha}) $ is non-degenerate if and only if $ (X\scr_{\alpha}G,\wh{\alpha}) $ is saturated. The second one (Theorem \ref{thm4}) states that the locally compact group $ G $ has the AP if and only if $ X\fcr_{\alpha}G=X\scr_{\alpha}G $ holds for any $ \LI $-comodule $ (X,\alpha) $.

\begin{remark}\label{rem7ii} It is known that if $ (Y,\delta) $ is a W*-$ L(G) $-comodule, then the double crossed product $\left(  (Y\lt_{\delta}G)\rtimes_{\wh{\delta}}G,\wh{\wh{\delta}} \right) $ is isomorphic to $ (Y\vt\BLT,\wt{\delta}) $ the isomorphism given by the map  $ \pi=(\mathrm{id}_{Y}\ot \mathrm{Ad}W_{\Lambda})\circ(\delta\ot \mathrm{id}_{\BLT})$ (see for example \cite[Theorem II.2.1]{SVZ2} or \cite[Chapter I, Theorem 2.7]{NaTa}). 
\end{remark}

We will prove that for an $ L(G) $-comodule $ (Y,\delta) $ (not necessarily a von Neumann algebra) the same map $ \pi $ maps the tensor product $ Y\vt\BLT $ onto the double spatial crossed product $ (Y\lt_{\delta}G)\scr_{\wh{\delta}}G $ if and only if $ (Y,\delta) $ is non-degenerate. Also, we show that $ \pi $ maps $ Y\vt \BLT $ onto the double Fubini crossed product $ (Y\lt_{\delta}G)\fcr_{\wh{\delta}}G $ if and only if $ (Y,\delta) $ is saturated.	

\begin{pro}\label{p4} Let $ (Y,\delta) $ be an $ L(G) $-comodule and consider the map  \[\pi=(\mathrm{id}_{Y}\ot \mathrm{Ad}W_{\Lambda})\circ(\delta\ot \mathrm{id}_{\BLT})\colon Y\vt\BLT\to Y\vt\BLT\vt\BLT.\] 
	Then, $ \pi $ satisfies
	\[\pi(Y\lt_{\delta}G)=\wh{\delta}(Y\lt_{\delta}G) \]
and the following conditions are equivalent:
	\begin{itemize}
		\item[(i)] $ (Y,\delta) $ is non-degenerate; 
		\item[(ii)] The map $ \pi $ is an $ L(G) $-comodule isomorphism from $ (Y\vt\BLT,\wt{\delta}) $ onto the double spatial crossed product $ ( (Y\lt_{\delta}G)\scr_{\alpha}G,\ \wh{\alpha} )  $, where $ \alpha=\wh{\delta}=(\mathrm{id}_{Y}\ot\beta_{G})|_{Y\lt_{\delta}G} $.
	\end{itemize}   	 
\end{pro}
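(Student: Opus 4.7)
The plan is to first establish the identity $\pi(Y\lt_{\delta}G)=\hat{\delta}(Y\lt_{\delta}G)$ and then reduce the equivalence (i)$\iff$(ii) to a computation of the $\pi$-preimage of $(Y\lt_{\delta}G)\scr_{\hat{\delta}}G$.

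For the first identity, I use the spatial description $Y\lt_{\delta}G=\wsp\{(1\ot f)\delta(y)(1\ot g):f,g\in\LI,\ y\in Y\}$ from Theorem \ref{thm1}, and, via Remark \ref{rem1}, embed $(Y,\delta)$ into a W*-$L(G)$-comodule $(N,\varepsilon)$ so that $\pi$ extends to the *-homomorphism $\pi_{N}=(\Id_{N}\ot\Ad W_{\Lambda})\circ(\varepsilon\ot\Id_{\BLT})$ and $\hat{\delta}$ extends to the *-homomorphism $\Id_{N}\ot\beta_{G}$. Agreement on the generators then reduces to two identities: (a) $\Ad W_{\Lambda}(1\ot f)=\beta_{G}(f)$ for $f\in\LI$, obtained by checking that both sides are multiplication on $L^{2}(G\times G)$ by $(s,t)\mapsto f(st^{-1})$; and (b) $\pi_{N}(\delta(y))=\delta(y)\ot1=\hat{\delta}(\delta(y))$, which follows from coassociativity of $\delta$ together with the identity $\Ad W_{\Lambda}\circ\delta_{G}(a)=a\ot1$ on $L(G)$ (itself a consequence of $W_{\Lambda}W_{G}^{*}=1\ot\Lambda$ and $\Lambda^{2}=1$). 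Multiplicativity and w*-continuity then extend the equality to all of $Y\lt_{\delta}G$.

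The key auxiliary fact for the equivalence is the formula
\[\pi_{N}(1\ot\rho_{s})=1\ot1\ot\lambda_{s},\quad s\in G,\]
which follows from $W_{G}(1\ot\rho_{s})W_{G}^{*}=1\ot\rho_{s}$ (mutual commutation of $L(G)$ and $R(G)$) and $\Lambda\rho_{s}\Lambda=\lambda_{s}$. Granting this, the W*-Takesaki duality of Remark \ref{rem7ii} makes $\pi_{N}$ an $L(G)$-comodule isomorphism onto the W*-double crossed product, so by restriction $\pi$ is automatically a w*-continuous complete isometry intertwining $\wt{\delta}$ and $\wh{\wh{\delta}}$. Combining the displayed formula with the first identity and multiplicativity yields
\[(Y\lt_{\delta}G)\scr_{\hat{\delta}}G=\pi_{N}\bigl(\wsp\{(1\ot\rho_{s})z(1\ot\rho_{t}):z\in Y\lt_{\delta}G,\ s,t\in G\}\bigr);\]
substituting $z=(1\ot f)\delta(y)(1\ot g)$ and using the density $\wsp\{\rho_{s}\LI\}=\wsp\{\LI\rho_{t}\}=\BLT$ (since $R(G)$ and $\LI$ jointly generate $\BLT$) simplifies the bracketed set to $\wsp\{(1\ot b)\delta(y)(1\ot c):b,c\in\BLT,\ y\in Y\}$. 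By Corollary \ref{cor6.2} this last space equals $Y\vt\BLT$ if and only if $(Y,\delta)$ is non-degenerate, which simultaneously establishes (i)$\Rightarrow$(ii) and (ii)$\Rightarrow$(i).

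The main technical obstacle I anticipate is the bookkeeping of the unitary-conjugation identities involving $W_{G},W_{\Lambda},U_{G}$ and $\Lambda$, and ensuring that the restriction of the W*-level *-homomorphism $\pi_{N}$ to the dual operator space $Y\vt\BLT$ actually has image inside the double spatial crossed product---precisely what the non-degeneracy hypothesis controls via the displayed pre-image computation.
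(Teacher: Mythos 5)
Your proof is correct. The computational core coincides with the paper's: the paper establishes the single formula $\pi((1\ot\rho_{t}f)\delta(y)(1\ot g\rho_{s}))=(1\ot1\ot\lambda_{t})\wh{\delta}((1\ot f)\delta(y)(1\ot g))(1\ot1\ot\lambda_{s})$ by exactly the unitary identities you split into $\mathrm{Ad}W_{\Lambda}(1\ot f)=\beta_{G}(f)$ and $\pi_{N}(1\ot\rho_{s})=1\ot1\ot\lambda_{s}$, and it then uses the same factorization $\BLT=\wsp\{R(G)\LI\}=\wsp\{\LI R(G)\}$ together with Corollary \ref{cor6.2} to translate non-degeneracy into the spanning statement $Y\vt\BLT=\wsp\{(1\ot b)\delta(y)(1\ot c)\}$. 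Where you genuinely diverge is in the logical organization of the equivalence. The paper verifies the intertwining relation $\wh{\alpha}\circ\pi=(\pi\ot\Id)\circ\wt{\delta}$ by a direct computation on generators, and that reduction to generators itself invokes the non-degeneracy hypothesis; you instead obtain the intertwining unconditionally by restricting the W*-level Takesaki duality of Remark \ref{rem7ii} (the device the paper reserves for Proposition \ref{p15}), so that condition (ii) collapses to pure surjectivity. Consequently your converse direction (ii)$\Rightarrow$(i) becomes a one-line consequence of the injectivity of the normal $*$-monomorphism $\pi_{N}$ applied to the unconditional identity $(Y\lt_{\delta}G)\scr_{\wh{\delta}}G=\pi_{N}\left(\wsp\{(1\ot b)\delta(y)(1\ot c):\ b,c\in\BLT,\ y\in Y\}\right)$, whereas the paper routes this direction through Corollary \ref{c10} (the double spatial crossed product is always a non-degenerate $L(G)$-comodule) and then unwinds the conjugation by $1\ot W_{G}S$. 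Your version buys a cleaner biconditional, makes explicit that the intertwining holds regardless of non-degeneracy, and dispenses with Corollary \ref{c10}; the paper's direct verification has the virtue of being self-contained in that step rather than leaning on the classical W*-duality theorem.
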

\begin{proof} Suppose that $ Y $ is a w*-closed subspace of $ B(H) $ for some Hilbert space $ H $ and put $ X:=Y\lt_{\delta}G $.

We claim that, for any $ s,t\in G $, $ f,g\in\LI $ and $ y\in Y $, we have:
\begin{align} \label{(**)}
\pi((1\ot\rho_{t} f)\delta(y)(1\ot g\rho_{s}))=(1\ot1\ot\lambda_{t})\alpha((1\ot f)\delta(y)(1\ot g))(1\ot1\ot\lambda_{s}).
\end{align}
In order to prove \eqref{(**)}, first observe the following:
\begin{align*} 
W_{\Lambda}(1\ot \rho_{t}f)W_{\Lambda}^{*}&=W_{\Lambda}(1\ot \rho_{t})W_{\Lambda}^{*}W_{\Lambda}(1\ot f)W_{\Lambda}^{*}\\
&=(1\ot\Lambda)W_{G}(1\ot \rho_{t})W_{G}^{*}(1\ot\Lambda)W_{\Lambda}S(f\ot 1)SW_{\Lambda}^{*}\\&=(1\ot\Lambda)(1\ot \rho_{t})W_{G}W_{G}^{*}(1\ot\Lambda)W_{\Lambda}S\delta_{G}(f)SW_{\Lambda}^{*}\\
&=(1\ot\Lambda)(1\ot \rho_{t})(1\ot\Lambda)W_{\Lambda}SW_{G}^{*}(f\ot 1)W_{G}SW_{\Lambda}^{*}	\\
&=(1\ot\lambda_{t}) U_{G}^{*}(f\ot1) U_{G} \\
&= (1\ot\lambda_{t}) \beta_{G}(f)
\end{align*}
and similarly $ W_{\Lambda}(1\ot g\rho_{s})W_{\Lambda}^{*}=\beta_{G}(g)(1\ot \lambda_{s}) $. Also, we have \[\alpha(\delta(y))= (\Id\ot\beta_{G})(\delta(y))=\delta(y)\ot1.\]
Thus we get:	
\begin{align*} 
\pi((1\ot\rho_{t} f)\delta(y)(1\ot g\rho_{s}))=&(1\ot W_{\Lambda})(\delta\ot \mathrm{id})((1\ot\rho_{t}f)\delta(y)(1\ot g\rho_{s}))(1\ot W_{\Lambda}*)\\
=&(1\ot W_{\Lambda})(1\ot 1\ot\rho_{t}f)(\delta\ot \mathrm{id})(\delta(y))(1\ot g\rho_{s})(1\ot W_{\Lambda}^{*})\\
=&(1\ot W_{\Lambda})(1\ot 1\ot\rho_{t}f)(\mathrm{id}\ot\delta_{G})(\delta(y))(1\ot1\ot g\rho_{s})\\&(1\ot W_{\Lambda}^{*})\\
=&(1\ot W_{\Lambda})(1\ot 1\ot\rho_{t}f)(1\ot W_{G}^{*})(\delta(y)\ot 1)(1\ot W_{G})\\&(1\ot1\ot g\rho_{s})(1\ot W_{\Lambda}^{*})\\
=&(1\ot W_{\Lambda})(1\ot 1\ot\rho_{t}f)(1\ot W_{G}^{*})(1\ot1\ot \Lambda)(\delta(y)\ot 1)\\&(1\ot1\ot \Lambda)(1\ot W_{G})(1\ot1\ot g\rho_{s})(1\ot W_{\Lambda}^{*})\\	
=&(1\ot W_{\Lambda})(1\ot 1\ot\rho_{t}f)(1\ot W_{\Lambda}^{*})(\delta(y)\ot 1)(1\ot W_{\Lambda})\\&(1\ot1\ot g\rho_{s})(1\ot W_{\Lambda}^{*})\\
=&(1\ot 1\ot\lambda_{t})\left[ (\Id\ot\beta_{G})((1\ot f)\delta(y)(1\ot g))\right] (1\ot1\ot\lambda_{s})\\
=&(1\ot 1\ot\lambda_{t})\alpha((1\ot f)\delta(y)(1\ot g)) (1\ot1\ot\lambda_{s})	
\end{align*} 
and hence \eqref{(**)} is proved. The equality $ \pi(X)=\alpha(X) $ follows easily from \eqref{(**)}.
	
(i)$ \implies $(ii): Suppose that $ (Y,\delta) $ is non-degenerate. Since $$ \BLT=\wsp\{R(G)\LI\}=\wsp\{\LI R(G)\},$$ we have
	\begin{align}\label{(*)} 
	Y\vt\BLT&=\wsp\left\lbrace \left(\CI\vt\BLT\right)\delta(Y) \left(\CI\vt\BLT\right)\right\rbrace\\
	\nonumber&=\wsp\left\lbrace (1\ot\rho_{t}f)\delta(y)(1\ot g\rho_{s})):\ s,t\in G,\ f,g\in \LI,\ y\in Y\right\rbrace.	
	\end{align}
		
	 Clearly, the equality $ \pi(Y\vt\BLT)=X\scr_{\alpha}G $ follows from \eqref{(**)} and \eqref{(*)}. It remains to prove that $ \pi $ is an $ L(G) $-comodule isomorphism from $ (Y\vt\BLT,\wt{\delta}) $ onto $ ( X\scr_{\alpha}G,\ \wh{\alpha} )  $ . Since $ \pi $ is completely isometric and onto $ X\scr_{\alpha}G $, it suffices to prove that 
	\begin{equation} 
	\wh{\alpha}\circ\pi(x)=(\pi\ot \mathrm{id})\circ\wt{\delta}(x),\quad \forall x\in Y\vt\BLT.\label{eq3}
	\end{equation}
	Since $ (Y,\delta) $ is non-degenerate, we have
	\[Y\vt\BLT=\wsp\left\lbrace\left(\CI\vt\BLT\right)\delta(Y) \right\rbrace  \] 
	and thus we only have to verify \eqref{eq3} for  $ x=(1\ot\rho_{t}f)\delta(y) $, where $  t\in G $, $ f\in\LI $ and $ y\in Y $.  Indeed, this follows immediately from the calculations below: 
	\begin{align*} 
	\wh{\alpha}\circ\pi((1\ot\rho_{t}f)\delta(y))&=\wh{\alpha}((1\ot1\ot\lambda_{t})\alpha((1\ot f)\delta(y)))\\
	&=\left[ (1\ot1\ot\lambda_{t})\alpha((1\ot f)\delta(y))\right] \ot\lambda_{t}\\
	&=\left[ (1\ot1\ot\lambda_{t})\pi((1\ot f)\delta(y))\right] \ot\lambda_{t}\\
	&=\pi((1\ot\rho_{t}f)\delta(y))\ot\lambda_{t}\\
	&=(\pi\ot \mathrm{id})(\left[ (1\ot\rho_{t}f)\delta(y)\right] \ot\lambda_{t})
	\end{align*} 
	and on the other hand we have
	\begin{align*}  
	\wt{\delta}((1\ot\rho_{t}f)\delta(y))=&(\mathrm{id}\ot \mathrm{Ad}W_{G}\circ \sigma)\circ(\delta\ot \mathrm{id})((1\ot\rho_{t}f)\delta(y))\\
	=&(\mathrm{id}\ot \mathrm{Ad}W_{G}\circ \sigma)((1\ot1\ot\rho_{t}f)(\mathrm{id}\ot\delta_{G})(\delta(y)))\\
	=&(\mathrm{id}\ot \mathrm{Ad}W_{G})(1\ot\rho_{t}f\ot1)\left[ \delta(y)\ot1\right]\\ 
	=& (1\ot W_{G}(\rho_{t}\ot1)W_{G}^{*})(1\ot W_{G}(f\ot1)W_{G}^{*})(\delta(y)\ot1)\\
	=&(1\ot \rho_{t}\ot\lambda_{t})(1\ot f\ot1)(\delta(y)\ot1)\\
	=&[(1\ot\rho_{t}f)\delta(y)]\ot\lambda_{t}.
	\end{align*}
	
	(ii)$ \implies $(i): By Corollary \ref{c10}  we have that $(X\scr_{\alpha}G,\wh{\alpha}) $ is non-degenerate for any $ \LI $-comodule $ (X,\alpha) $. Therefore, $ ( (Y\dscr_{\delta}G)\scr_{\alpha}G,\ \wh{\alpha} )  $ is always non-degenerate and since it is isomorphic to $ (Y\vt\BLT,\wt{\delta}) $ (by assumption), it follows that $ (Y\vt\BLT,\wt{\delta}) $ is non-degenerate too. That is:
	\[Y\vt\BLT\vt\BLT=\wsp\{N\wt{\delta}(Y\vt\BLT)N \},\]
	where $ N:=\mathbb{C}1_{H}\vt\mathbb{C}1_{\LT}\vt\BLT $. Also put $ M:=\mathbb{C}1_{H}\vt\BLT$ $\vt\BLT  $. Thus, we get: 
	\begin{align*} 
	Y\vt&\BLT\vt\BLT=\\
	&=\wsp\{N(1_{H}\ot W_{G}S)(\delta\ot \mathrm{id}_{\BLT})(Y\vt\BLT)(1_{H}\ot SW_{G}^{*})N \}\\
	&\sub\wsp\{M(\delta(Y)\vt\BLT)M \}\\
	&\sub\left( \wsp\{(\CI\vt\BLT)\delta(Y)(\CI\vt\BLT)\}\right) \vt\BLT
	\end{align*}
	and therefore \[Y\vt\BLT=\wsp\{(\CI\vt\BLT)\delta(Y)(\CI\vt\BLT)\},\] which means that $ (Y,\delta) $ is non-degenerate (by Corollary \ref{cor6.2}).	
\end{proof}

\begin{remark}\label{rem7} It is known that every W*-$ L(G) $-comodule is saturated (see e.g. \cite[Proposition II.1.1]{SVZ2}). So, if $ Y $ is an $ L(G) $-subcomodule of a W*-$ L(G) $-comodule $ (N,\delta) $, then we have
	\begin{align*} 
	   \mathrm{Sat}(Y,\delta|_{Y})&=(Y\ft L(G))\cap \mathrm{Sat}(N,\delta)\\&=(Y\ft L(G))\cap\delta(N)
	\end{align*}
and thus $ (Y,\delta|_{Y}) $ is saturated if and only if $ \delta(Y)=(Y\ft L(G))\cap\delta(N) $.	The latter is equivalent to the condition $ \delta(Y)=(Y\vt\BLT)\cap\delta(N) $, since $ Y\ft L(G)=(Y\vt\BLT)\cap(N\vt L(G)) $ and $ \delta(N)\sub N\vt L(G) $.
\end{remark}

\begin{pro}\label{p15}  Let $ (Y,\delta) $ be an $ L(G) $-comodule and consider the map  $$ \pi=(\mathrm{id}_{Y}\ot \mathrm{Ad}W_{\Lambda})\circ(\delta\ot \mathrm{id}_{\BLT})\colon Y\vt\BLT\to Y\vt\BLT\vt\BLT $$ as in Proposition \ref{p4}. Then, the following are equivalent:
	\begin{itemize}
		\item[(i)] $ (Y,\delta) $ is saturated; 
		\item[(ii)] The map $ \pi $ is an $ L(G) $-comodule isomorphism from $ (Y\vt\BLT,\wt{\delta}) $ onto the double Fubini crossed product $ ( (Y\lt_{\delta}G)\fcr_{\alpha}G,\ \wh{\alpha} )  $, where $ \alpha=\wh{\delta}=(\mathrm{id}_{Y}\ot\beta_{G})|_{Y\lt_{\delta}G} $.
	\end{itemize} 
	
\end{pro}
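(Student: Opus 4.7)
The plan is to adapt the strategy of Proposition \ref{p4}, with the spatial crossed product replaced by the Fubini crossed product and non-degeneracy by saturation. As in Proposition \ref{p4}, I would embed $(Y,\delta)$ as a subcomodule of a W*-$L(G)$-comodule $(N,\varepsilon)$ via Remark \ref{rem1}, so that by Remark \ref{rem7ii} the extended map $\pi_N=(\mathrm{id}_N\ot\Ad W_\Lambda)\circ(\varepsilon\ot\mathrm{id}_{\BLT})$ is the classical Takesaki-duality isomorphism from $(N\vt\BLT,\wt{\varepsilon})$ onto the double crossed product of $N$; the restriction $\pi=\pi_N|_{Y\vt\BLT}$ is therefore injective, completely isometric, and an $L(G)$-comodule map by construction, and what remains is to decide whether its image equals $(Y\lt_\delta G)\fcr_\alpha G=((Y\lt_\delta G)\vt\BLT)\cap\pi_N(N\vt\BLT)$.

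For (ii) $\Rightarrow$ (i): By Corollary \ref{c10} the double Fubini crossed product $((Y\lt_\delta G)\fcr_\alpha G,\wh{\alpha})$ is always a saturated $L(G)$-comodule, so by Remark \ref{rem7iii} the isomorphism hypothesis forces $(Y\vt\BLT,\wt{\delta})$ to be saturated. To transfer this back to $(Y,\delta)$, I apply Proposition \ref{p8}(b): given an $L(G)$-subcomodule $Z\subseteq Y$ and $y\in Y$ with $\delta(y)\in Z\ft L(G)$, a direct check using the defining formula for $\wt{\delta}$ and the observation that $\Ad W_G(\CI\vt L(G))\subseteq\BLT\vt L(G)$ shows that $W:=Z\vt\BLT$ is a $\wt{\delta}$-subcomodule of $Y\vt\BLT$ and that $\wt{\delta}(y\ot 1)\in W\ft L(G)$. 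Saturation of $(Y\vt\BLT,\wt{\delta})$ then gives $y\ot 1\in Z\vt\BLT$, i.e., $y\in Z$, which establishes saturation of $(Y,\delta)$ by Proposition \ref{p8}(b).

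For (i) $\Rightarrow$ (ii): The inclusion $\pi(Y\vt\BLT)\supseteq(Y\lt_\delta G)\fcr_\alpha G$ is unconditional. For $z$ in the target, let $x=\pi_N^{-1}(z)\in N\vt\BLT$; then $(\varepsilon\ot\mathrm{id})(x)=(1\ot W_\Lambda^*)z(1\ot W_\Lambda)$, and since $Y\lt_\delta G$ is a $\CI\vt\LI$-bimodule (Proposition \ref{pro2.13i}) and $W_\Lambda\in\LI\vt\BLT$, this conjugate sits in $(Y\lt_\delta G)\vt\BLT\cap(N\vt L(G)\vt\BLT)=(Y\ft L(G))\vt\BLT$; slicing in the last $\BLT$-factor gives $\varepsilon((\mathrm{id}_N\ot\phi)(x))\in Y\ft L(G)$ for every $\phi\in\BLT_*$, and the automatic saturation of the W*-comodule $(N,\varepsilon)$ (Remark \ref{rem7}) forces $(\mathrm{id}_N\ot\phi)(x)\in Y$, so $x\in Y\vt\BLT$. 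The opposite containment $\pi(Y\vt\BLT)\subseteq(Y\lt_\delta G)\vt\BLT$ is where saturation of $(Y,\delta)$ genuinely enters: by Proposition \ref{p8}(c), $y\in\overline{A(G)\cdot y}^{\mathrm{w}^{*}}$ for every $y\in Y$, so by w*-closedness of the target it suffices to verify $\pi((u\cdot y)\ot b)\in(Y\lt_\delta G)\vt\BLT$ for $u\in A(G)$, $y\in Y$, $b\in\BLT$; this follows because $\delta(u\cdot y)\in\delta(Y)\subseteq Y\lt_\delta G$ (Proposition \ref{pro2.13i}), hence $\delta(u\cdot y)\ot b\in(Y\lt_\delta G)\vt\BLT$, and the latter is a $\CI\vt\LI\vt\BLT$-bimodule closed under conjugation by $1\ot W_\Lambda\in\CI\vt\LI\vt\BLT$.

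The main obstacle is the simultaneous bookkeeping of the three-fold tensor positions and the bimodule/subcomodule stability properties — in (i) $\Rightarrow$ (ii) showing that $(Y\lt_\delta G)\vt\BLT$ is stable under conjugation by $1\ot W_\Lambda$ via the $\LI$-bimodule structure of $Y\lt_\delta G$, and in (ii) $\Rightarrow$ (i) the parallel subcomodule/coaction computation placing $\wt{\delta}(y\ot 1)$ inside $W\ft L(G)$. Both rely on the factorisation $W_\Lambda=(1\ot\Lambda)W_G$ with $W_G\in\LI\vt L(G)$ and on the definition of $\wt{\delta}$ via $\Ad W_G$ and the flip, so the difficulty is not conceptual but a careful tracking of which Fubini factors interact with which unitaries.
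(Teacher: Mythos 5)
Your overall strategy --- embedding $(Y,\delta)$ in a W*-comodule $(N,\varepsilon)$, invoking the classical Takesaki duality for the extension $\pi_{N}$, and reducing everything to comparing the image $\pi(Y\vt\BLT)$ with $(Y\lt_{\delta}G)\fcr_{\alpha}G=(Y\vt\BLT\vt\BLT)\cap\pi_{N}(N\vt\BLT)$ --- is the same as the paper's, but both implications contain genuine errors. In (i)$\Rightarrow$(ii) you have the two inclusions the wrong way round. The inclusion $\pi(Y\vt\BLT)\subseteq(Y\lt_{\delta}G)\fcr_{\alpha}G$ is the unconditional one: $\pi(y\ot b)=(1\ot W_{\Lambda})(\delta(y)\ot b)(1\ot W_{\Lambda}^{*})$ with $\delta(y)\ot b\in(Y\lt_{\delta}G)\vt\BLT$, which is stable under $\Ad(1\ot W_{\Lambda})$; your detour through Proposition \ref{p8}(c) is unnecessary, and in any case condition (c) is not available for a single saturated comodule, since the equivalences of Proposition \ref{p8} are quantified over \emph{all} comodules. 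The reverse inclusion $\pi(Y\vt\BLT)\supseteq(Y\lt_{\delta}G)\fcr_{\alpha}G$ is precisely where saturation of $(Y,\delta)$ must enter, and your justification of it is wrong: from $\varepsilon(n)\in Y\ft L(G)$ with $n=(\mathrm{id}_{N}\ot\phi)(x)$, saturation of the ambient W*-comodule $(N,\varepsilon)$ gives nothing, because $\varepsilon(n)$ lies in $\varepsilon(N)=\mathrm{Sat}(N,\varepsilon)$ trivially; what is needed is exactly $\mathrm{Sat}(Y,\delta)=(Y\ft L(G))\cap\varepsilon(N)=\delta(Y)$, i.e.\ saturation of $Y$ itself (Remark \ref{rem7}). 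If this inclusion were really unconditional then, combined with the genuinely unconditional forward inclusion, (ii) would hold for every $L(G)$-comodule, every comodule would be saturated, and by Proposition \ref{p9} every locally compact group would have the AP, contradicting \cite{LdS}.

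The direction (ii)$\Rightarrow$(i) has a parallel gap. The intermediate conclusion that $(Y\vt\BLT,\wt{\delta})$ is saturated is correct (Corollary \ref{c10} together with Remark \ref{rem7iii}), but the descent to $(Y,\delta)$ fails as written: from $\wt{\delta}(y\ot1)\in(Z\vt\BLT)\ft L(G)$, saturation of the \emph{ambient} comodule $Y\vt\BLT$ only says that elements of $\mathrm{Sat}(Y\vt\BLT,\wt{\delta})$ lie in $\wt{\delta}(Y\vt\BLT)$, which $\wt{\delta}(y\ot1)$ does trivially; to conclude $y\ot1\in Z\vt\BLT$ you would need saturation of the \emph{subcomodule} $Z\vt\BLT$ (this is how (a)$\Rightarrow$(b) is proved in Proposition \ref{p8}, and it uses saturation of $Z$, not of the ambient space). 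Likewise, condition (b) of Proposition \ref{p8} for the single comodule $Y$ does not yield saturation of $Y$: the implication (b)$\Rightarrow$(a) there passes through the auxiliary comodule $\mathrm{Sat}(Y,\delta)$. The paper sidesteps both problems by running a single chain of equivalences: $\pi(Y\vt\BLT)=(Y\lt_{\delta}G)\fcr_{\alpha}G$ holds iff $\delta(Y)\vt\BLT=\left[(Y\vt\BLT)\cap\delta(N)\right]\vt\BLT$, iff $\delta(Y)=(Y\vt\BLT)\cap\delta(N)$, which is saturation of $(Y,\delta)$ by Remark \ref{rem7}. You should restructure the argument along these lines rather than trying to prove the two inclusions separately with the hypotheses you assigned to them.
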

\begin{proof} By Remark \ref{rem1} and Remark \ref{rem7iii} we may assume that $ (Y,\delta) $ is an $ L(G) $-subcomodule of some W*-$ L(G) $-comodule $ N $, i.e. $ N $ is a von Neumann algebra such that $ Y $ is a w*-closed subspace of $ N $ and $\delta $ extends to a W*-$ L(G) $-action on $ N $, which we still denote by $ \delta $ for simplicity. Also, the map $ \pi $ extends to the map $ (\mathrm{id}_{N}\ot \mathrm{Ad}W_{\Lambda})\circ(\delta\ot \mathrm{id}_{\BLT}) $ which gives the $ L(G) $-comodule isomorphism between $\left(  (N\lt_{\delta}G)\rtimes_{\wh{\delta}}G,\wh{\wh{\delta}} \right) $ and $ (N\vt\BLT,\wt{\delta}) $ (see Remark \ref{rem7ii}). It follows that $ \pi $ is an $ L(G) $-comodule monomorphism from $ (Y\vt\BLT,\wt{\delta}) $ into $ ( Y\vt \BLT\vt \BLT,\ \mathrm{id}_{Y}\ot\Id_{\BLT}\ot\delta_{G} )  $. 
	
Thus it suffices to show that $ \pi $ maps $ Y\vt\BLT $ onto $ (Y\lt_{\delta}G)\fcr_{\alpha}G$ if and only if $ (Y,\delta) $ is saturated.
	
First observe that 
\begin{align*} 
    Y\lt_{\delta}G&=\left( Y\vt\BLT\right)^{\wt{\delta}} \\
    &=\left( Y\vt\BLT\right)\cap\left( N\vt\BLT\right)^{\wt{\delta}}\\
    &= \left( Y\vt\BLT\right)\cap\left(N\lt_{\delta}G \right) 
\end{align*}	
and thus
\begin{align*} 
 \left(Y\lt_{\delta}G \right) \vt\BLT&=\left[\left( Y\vt\BLT\right)\cap\left(N\lt_{\delta}G \right) \right] \vt\BLT\\
 &=\left(Y\vt\BLT\vt\BLT \right) \cap\left[ \left( N\lt_{\delta}G\right)\vt\BLT \right]. 
\end{align*}
From the above equality and Remark \ref{rem7ii}  we get:
\begin{align*} 
      (Y\lt_{\delta}G)\fcr_{\alpha}G&=\left( \left(Y\lt_{\delta}G \right) \vt\BLT\right) ^{\wt{\alpha}}\\
      &=\left(Y\vt\BLT\vt\BLT \right)\cap \left[ \left( N\lt_{\delta}G\right)\vt\BLT \right]^{\wt{(\wh{\delta})}}\\
      &=\left(Y\vt\BLT\vt\BLT \right)\cap\left[ (N\lt_{\delta}G)\rtimes_{\wh{\delta}}G\right]\\
      &= \left(Y\vt\BLT\vt\BLT \right)\cap \pi\left(N\vt\BLT \right).
\end{align*}

Therefore, the equality \[\pi\left(Y\vt\BLT \right)=(Y\lt_{\delta}G)\fcr_{\alpha}G  \]
	is equivalent to \begin{equation}\label{4}
	\pi\left(Y\vt\BLT \right)=\left(Y\vt\BLT\vt\BLT \right)\cap \pi\left(N\vt\BLT \right).
	\end{equation}
Since  \[\pi\left(Y\vt\BLT \right)=(1\ot W_{\Lambda})(\delta(Y)\vt\BLT)(1\ot W_{\Lambda}^{*}) \]  and \[ \pi\left(N\vt\BLT \right)=(1\ot W_{\Lambda})(\delta(N)\vt\BLT)(1\ot W_{\Lambda}^{*}),  \] the equality \eqref{4} is equivalent to
\begin{align*} 
   \delta(Y)\vt\BLT&=\left[ Y\vt W_{\Lambda}\left(\BLT\vt\BLT \right)W_{\Lambda}^{*}  \right]\cap \left(\delta(N)\vt\BLT \right) \\
   &=\left(Y\vt\BLT\vt\BLT \right)\cap \left(\delta(N)\vt\BLT \right)\\
   &=\left[\left( Y\vt\BLT\right) \cap\delta(N) \right] \vt\BLT,
\end{align*}
or equivalently, \[\delta(Y)=\left( Y\vt\BLT\right) \cap\delta(N), \]	
which, by Remark \ref{rem7}, is true if and only if $ (Y,\delta) $ is saturated.
\end{proof}

The following corollary follows immediately from Proposition \ref{p9} and Propositions \ref{p4} and \ref{p15}.

\begin{cor}\label{c12} For a locally compact group $ G $, the following conditions are equivalent:
	\begin{itemize}
		\item [(1)] $ G $ has the AP;\\		
		\item [(2)] $ ( (Y\lt_{\delta}G)\fcr_{\alpha}G,\ \wh{\alpha} )\simeq(Y\vt\BLT,\wt{\alpha}) $ for any $ L(G) $-comodule $ (Y,\delta) $;\\		
		\item[(3)] $ ( (Y\lt_{\delta}G)\scr_{\alpha}G,\ \wh{\alpha} )\simeq (Y\vt\BLT,\wt{\alpha}) $ for any $ L(G) $-comodule $ (Y,\delta) $,
	\end{itemize}
where the above isomorphisms are given as in Propositions \ref{p4} and \ref{p15}.
\end{cor}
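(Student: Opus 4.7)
The plan is to show the two equivalences (1)$\iff$(2) and (1)$\iff$(3) separately, each by combining the general characterization of the relevant crossed product as an image of the map $\pi$ (from Propositions \ref{p15} and \ref{p4} respectively) with the global characterization of the approximation property via saturation/non-degeneracy of all $L(G)$-comodules (Proposition \ref{p9}). Since the corollary asks for an equivalence stated for \emph{every} $L(G)$-comodule $(Y,\delta)$, this is essentially a bookkeeping exercise once those prior results are invoked.

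For (1)$\iff$(2): First I would note that by Proposition \ref{p9}, $G$ has the AP if and only if every $L(G)$-comodule $(Y,\delta)$ is saturated. On the other hand, by Proposition \ref{p15}, for a fixed $(Y,\delta)$, saturation is equivalent to the map
\[\pi=(\mathrm{id}_{Y}\ot \mathrm{Ad}W_{\Lambda})\circ(\delta\ot \mathrm{id}_{\BLT})\]
being an $L(G)$-comodule isomorphism from $(Y\vt\BLT,\wt{\delta})$ onto the double Fubini crossed product $((Y\lt_{\delta}G)\fcr_{\alpha}G, \wh{\alpha})$. Stringing these two biconditionals together universally over $(Y,\delta)$ yields the equivalence of (1) and (2).

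For (1)$\iff$(3): Analogously, Proposition \ref{p9} also gives that $G$ has the AP if and only if every $L(G)$-comodule is non-degenerate, while Proposition \ref{p4} characterizes non-degeneracy of $(Y,\delta)$ as the condition that the same map $\pi$ is an $L(G)$-comodule isomorphism from $(Y\vt\BLT,\wt{\delta})$ onto the double spatial crossed product $((Y\lt_{\delta}G)\scr_{\alpha}G, \wh{\alpha})$. Combining gives the equivalence.

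There is no real obstacle here since all the heavy lifting has been done: Proposition \ref{p9} is the deep input tying the AP to a universal property of $L(G)$-comodules, and Propositions \ref{p4} and \ref{p15} provide the precise translation between these universal properties and the Takesaki-duality statements. The only thing worth emphasizing is that the same map $\pi$ implements both isomorphisms in (2) and (3), which matches the phrase ``the above isomorphisms are given as in Propositions \ref{p4} and \ref{p15}'' in the statement, so no extra argument is needed to identify the isomorphism.
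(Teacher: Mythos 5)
Your proposal is correct and is exactly the paper's argument: the paper derives Corollary \ref{c12} immediately by combining Proposition \ref{p9} (AP $\iff$ every $L(G)$-comodule is saturated $\iff$ every $L(G)$-comodule is non-degenerate) with Propositions \ref{p15} and \ref{p4}, which characterize saturation and non-degeneracy of $(Y,\delta)$ via the map $\pi$ being an isomorphism onto the double Fubini and double spatial crossed products respectively. Nothing further is needed.
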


Thus, since the Takesaki-duality holds for crossed products of W*-$ L(G) $-comodules for any locally compact group $ G $ (as pointed out in Remark \ref{rem7ii}) and since there are groups without the AP (see \cite{LdS}), it becomes apparent that the generalization from the von Neumann algebraic setting to the setting of dual operator spaces is rather non-trivial.

Now, using that all $ \LI $-comodules are both non-degenerate and saturated, we prove that both the double Fubini crossed product $(X\fcr_{\alpha}G)\lt_{\widehat{\alpha}}G$ and the double spatial crossed product $ (X\scr_{\alpha}G)\lt_{\widehat{\alpha}}G $ of an $ \LI $-comodule $ (X,\alpha) $ are canonically isomorphic to the tensor product $ X\vt\BLT $ (see Propositions \ref{p14} and \ref{p16}). In fact, it follows that they are equal (Corollary \ref{c11}), even though $ X\scr_{\alpha}G $ and $ X\fcr_{\alpha}G $ may be different (recall Remark \ref{rem9}).

\begin{pro}\label{p14} Let $ (X,\alpha) $ be an $\LI $-comodule and put $$ \pi=(\mathrm{id}_{X}\ot \mathrm{Ad}V_{G})\circ(\alpha\ot \mathrm{id}_{\BLT})\colon X\vt\BLT\to X\vt\BLT\vt\BLT.$$ Then, the map $ \pi $ is an $ \LI $-comodule isomorphism from $ (X\vt\BLT,\wt{\alpha}) $ onto $ ( (X\scr_{\alpha}G)\lt_{\delta}G,\ \wh{\delta} )  $, where $ \delta=\wh{\alpha}=(\mathrm{id}_{X}\ot\delta_{G})|_{X\scr_{\alpha}G} $.
	In addition, $ \pi $ satisfies
	\[\pi(X\scr_{\alpha}G)=\delta(X\scr_{\alpha}G). \] 	
\end{pro}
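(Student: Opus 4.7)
The approach is to mirror the proof of Proposition~\ref{p4}, with the roles of $\LI$ and $L(G)$ interchanged and the multiplicative unitary $V_G$ playing the role of $W_\Lambda$. The key ingredient that replaces the non-degeneracy hypothesis~(i) of Proposition~\ref{p4} is Lemma~\ref{c2}: every $\LI$-comodule is automatically non-degenerate. Throughout, $X$ is a w*-closed subspace of some $B(H)$.

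The first step is to establish the analogue of the identity $(**)$ from the proof of Proposition~\ref{p4}. A direct computation on $L^{2}(G\times G)$ gives
\[
V_G(1\ot\lambda_s)V_G^{*}=\lambda_s\ot\lambda_s\qquad\text{and}\qquad V_G(1\ot f)V_G^{*}=1\ot f
\]
for all $s\in G$ and $f\in\LI$. Combining these with the coassociativity $(\alpha\ot\mathrm{id}_{\LI})\circ\alpha=(\mathrm{id}_{X}\ot\alpha_{G})\circ\alpha$ and the formula $\alpha_{G}(f)=V_{G}^{*}(f\ot1)V_{G}$, one obtains
\[
\pi\bigl((1\ot\lambda_s)\alpha(x)(1\ot\lambda_t)\bigr)=(1\ot\lambda_s\ot\lambda_s)(\alpha(x)\ot1)(1\ot\lambda_t\ot\lambda_t)
\]
for all $s,t\in G$ and $x\in X$. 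Since $\delta_{G}(\lambda_s)=\lambda_s\ot\lambda_s$ and $\delta_{G}(f)=f\ot1$ for $f\in\LI$, the right-hand side coincides with $\delta\bigl((1\ot\lambda_s)\alpha(x)(1\ot\lambda_t)\bigr)$. By w*-continuity and linearity this proves that $\pi$ restricted to $X\scr_\alpha G$ coincides with $\delta$, which immediately yields the last assertion of the proposition, namely $\pi(X\scr_\alpha G)=\delta(X\scr_\alpha G)$.

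Next, since $V_G\in L(G)\vt\LI$ commutes with $1\ot f$ for every $f\in\LI$, a short calculation using Remark~\ref{rem5} gives the bimodule identity
\[
\pi\bigl((1_{H}\ot f)z(1_{H}\ot g)\bigr)=(1_{H}\ot 1_{\LT}\ot f)\,\pi(z)\,(1_{H}\ot 1_{\LT}\ot g)
\]
for all $z\in X\vt\BLT$ and $f,g\in\LI$. Combined with the first step, $\pi$ maps $\wsp\{(\CI\vt\LI)(X\scr_\alpha G)(\CI\vt\LI)\}$ onto the w*-closed span of the elements $(1\ot 1\ot f)\delta(z)(1\ot 1\ot g)$ with $z\in X\scr_\alpha G$ and $f,g\in\LI$, which by Theorem~\ref{thm1} applied to the $L(G)$-comodule $(X\scr_\alpha G,\delta)$ equals $(X\scr_\alpha G)\lt_\delta G$. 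To identify the source with the whole of $X\vt\BLT$, I invoke Lemma~\ref{c2}: since $\BLT=\wsp\{\LI L(G)\}$ and the inclusion $(\CI\vt L(G))\alpha(X)(\CI\vt L(G))\subseteq X\scr_\alpha G$ holds by definition, we have
\[
\wsp\{(\CI\vt\LI)(X\scr_\alpha G)(\CI\vt\LI)\}\supseteq\wsp\{(\CI\vt\BLT)\alpha(X)\}=X\vt\BLT,
\]
where the last equality is precisely the non-degeneracy of $(X,\alpha)$. Hence $\pi(X\vt\BLT)=(X\scr_\alpha G)\lt_\delta G$.

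Finally, to verify that $\pi$ intertwines the $\LI$-actions, i.e.\ $\wh{\delta}\circ\pi=(\pi\ot\mathrm{id}_{\LI})\circ\wt{\alpha}$, it suffices by Lemma~\ref{c2} and w*-continuity to check the identity on a single element of the form $(1\ot\lambda_s g)\alpha(w)$ with $s\in G$, $g\in\LI$, $w\in X$. Using $\beta_{G}(\lambda_s)=\lambda_s\ot1$, the multiplicativity of $\beta_G$, and $\wt{\alpha}(\alpha(w))=\alpha(w)\ot 1$ (a direct consequence of coassociativity combined with the relation $\sigma\circ\alpha_{G}(f)=U_{G}(f\ot1)U_{G}^{*}$), both sides reduce to the common expression
\[
(1\ot\lambda_s\ot\lambda_s\ot1)(1\ot 1\ot\beta_{G}(g))(\alpha(w)\ot 1\ot 1).
\]
Since $\pi$ is a w*-continuous complete isometry as a composition of two such maps, this completes the verification that $\pi$ is an $\LI$-comodule isomorphism from $(X\vt\BLT,\wt{\alpha})$ onto $((X\scr_{\alpha}G)\lt_\delta G,\wh{\delta})$. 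I expect the main technical burden to be the careful bookkeeping of tensor positions when tracking the interplay between $V_G$, $U_G$, the flip $\sigma$, and the various comodule and dual actions.
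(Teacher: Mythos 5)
Your proposal is correct and follows essentially the same route as the paper's own proof: the same reduction of $X\vt\BLT$ to the generators $(\CI\vt\LI)(X\scr_{\alpha}G)(\CI\vt\LI)$ via the non-degeneracy of $(X,\alpha)$ from Lemma \ref{c2}, the same covariance computations showing $\pi$ agrees with $\Id_{X}\ot\delta_{G}$ on $X\scr_{\alpha}G$ and is an $\LI$-bimodule map, and the same verification of $\wh{\delta}\circ\pi=(\pi\ot\Id)\circ\wt{\alpha}$ on generators (you use $(1\ot\lambda_{s}g)\alpha(w)$ where the paper uses $(1\ot f\lambda_{s})\alpha(x)$, an immaterial difference). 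The only cosmetic divergence is that you cite Theorem \ref{thm1} explicitly to identify the target with $(X\scr_{\alpha}G)\lt_{\delta}G$, which the paper absorbs into the notation $\lt_{\delta}$.
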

\begin{proof} Suppose that $ X $ is a w*-closed subspace of $B(H)$ for some Hilbert space $ H $ and let $ K:= H\ot\LT $. Since $ (X,\alpha) $ is non-degenerate (by Lemma \ref{c2}) and \[ \BLT=\wsp\{\LI L(G) \}=\wsp\{L(G)\LI\} \] we have:
	\begin{align} 
	   X\vt\BLT=&\ \wsp\lbrace (\CI\vt \LI )(\CI\vt L(G) )\alpha(X)(\CI\vt L(G) )\nonumber\\&\ (\CI\vt \LI)    \rbrace \nonumber\\
	   =&\ \wsp\lbrace (\CI\vt \LI)(X\scr_{\alpha}G) (\CI\vt \LI) \rbrace. \label{eq7}
	\end{align}
On the other hand, by the definition of the crossed product, we have:
\begin{align} 
       (X\scr_{\alpha}G)\lt_{\delta}G=&\ \wsp\lbrace (\CIK\vt\LI)\delta(X\scr_{\alpha}G)(\CIK\vt\LI \rbrace \label{eq8}
\end{align}	
Since the map $ \pi $ is clearly a w*-continuous complete isometry, by the equalities \eqref{eq7} and \eqref{eq8}, in order to prove that $ \pi $ is an $ \LI $-comodule isomorphism onto $ (X\scr_{\alpha}G)\lt_{\delta}G $, it suffices to verify the following conditions:
\begin{equation}\label{eq9}
     \pi(X\scr_{\alpha}G)=\delta(X\scr_{\alpha}G),
\end{equation} 
\begin{equation}\label{eq10}  
      \pi((1\ot f)y)=(1\ot1\ot f)\pi(y),\quad \text{ for }f\in\LI \text{ and } y\in X\vt\BLT,
\end{equation}
and
\begin{equation}\label{eq11}
   \wh{\delta}\circ\pi=(\pi\ot\Id)\circ\wt{\alpha}
\end{equation}
For any $ x\in X $ and $ s,t\in G $ we have:
\begin{align*}  
    \pi((1\ot\lambda_{s})\alpha(x)(1\ot\lambda_{t}))=&\ (\Id\ot\Ad V_{G})( (\alpha\ot \Id)((1\ot\lambda_{s})\alpha(x)(1\ot\lambda_{t})))\\
    =&\ (\Id\ot\Ad V_{G})( (1\ot 1\ot\lambda_{s})(\alpha\ot\Id)(\alpha(x)) (1\ot 1\ot\lambda_{t}) )\\
    =&\ (\Id\ot\Ad V_{G})( (1\ot 1\ot\lambda_{s})(\Id\ot\alpha_{G})(\alpha(x)) (1\ot 1\ot\lambda_{t}) )\\
    =&\ (1\ot V_{G}) (1\ot 1\ot\lambda_{s})(\Id\ot\alpha_{G})(\alpha(x)) (1\ot 1\ot\lambda_{t})(1\ot V_{G}^{*})\\
    =&\ (1\ot V_{G})(1\ot 1\ot\lambda_{s})(1\ot V_{G}^{*})(\alpha(x)\ot1)(1\ot V_{G})\\&\ (1\ot 1\ot\lambda_{t})(1\ot V_{G}^{*})\\
    =&\ (1\ot SW^{*}_{G}S)(1\ot 1\ot\lambda_{s})(1\ot SW_{G}S)(\alpha(x)\ot1)\\&\ (1\ot SW^{*}_{G}S)(1\ot 1\ot\lambda_{t})(1\ot SW_{G}S)\\
    =&\ (1\ot SW^{*}_{G})(1\ot \lambda_{s}\ot 1)(1\ot W_{G}S)(\alpha(x)\ot1)(1\ot SW^{*}_{G})\\&\ (1\ot\lambda_{t}\ot 1)(1\ot W_{G}S)\\
    =&\ (1\ot S)(1\ot \lambda_{s}\ot \lambda_{s})(1\ot S)(\alpha(x)\ot1)(1\ot S)\\&\ (1\ot\lambda_{t}\ot \lambda_{t})(1\ot S)\\
    =&\ (1\ot \lambda_{s}\ot \lambda_{s})(\alpha(x)\ot1)(1\ot\lambda_{t}\ot \lambda_{t})\\
    =&\ (\Id\ot\delta_{G})((1\ot\lambda_{s})\alpha(x)(1\ot\lambda_{t}))
\end{align*}
and thus the equality \eqref{eq9} is proved.

On the other hand, for any $ f,g\in\LI $ and $ y\in X\vt \BLT $, we have:
\begin{align*} 
    \pi((1\ot f)y(1\ot g))=&\ (\Id\ot\Ad V_{G}) \circ(\alpha\ot\Id)((1\ot f)y(1\ot g)) \\
    =&\ (\Id\ot\Ad V_{G}) ( (1\ot 1\ot f)(\alpha\ot\Id)(y) (1\ot 1\ot g))\\
    =&\ (1\ot V_{G})(1\ot 1\ot f)(1\ot V_{G}^{*})\pi(y)(1\ot V_{G})(1\ot 1\ot g)(1\ot V_{G}^{*})\\
    =&\ (1\ot 1\ot f)\pi(y)(1\ot 1\ot g),
\end{align*}
because $ V_{G}\in L(G)\vt\LI $ and therefore it commutes with $ 1\ot f $ and $ 1\ot g $. Hence we have proved \eqref{eq10}.

Since $ (X,\alpha) $ is non-degenerate, we have
\[X\vt \BLT=\wsp\lbrace (\CI\vt\LI)(\CI\vt L(G))\alpha(X) \rbrace \]
and thus we only need to verify \eqref{eq11} for elements of the form $ y=(1\ot f)(1\ot\lambda_{s})\alpha(x) $, where $ f\in \LI $, $ s\in G $ and $ x\in X $. Indeed, we have

\begin{align*} 
   \wt{\alpha}((1\ot f)(1\ot\lambda_{s})\alpha(x))=&\ (\Id\ot\Ad U_{G}^{*})\circ(\Id\ot\sigma)\left[  (\alpha\ot\Id)(( 1\ot f\lambda_{s})\alpha(x)) \right] \\
   =&\ (\Id\ot\Ad U_{G}^{*})\circ(\Id\ot\sigma)\left[  (1\ot 1\ot f\lambda_{s})(\Id\ot\alpha_{G})(\alpha(x)) \right] \\
   =&\ (\Id\ot\Ad U_{G}^{*})\left[  (1\ot f\lambda_{s}\ot 1)(\Id\ot\Ad U_{G})(\alpha(x)\ot 1) \right] \\
   =&\ (\Id\ot\Ad U_{G}^{*})(1\ot f\ot 1)(\Id\ot\Ad U_{G}^{*})(1 \ot \lambda_{s} \ot 1)\\&\ (\alpha(x)\ot 1)\\
   =&\ (1\ot\beta_{G}(f))(1\ot\lambda_{s}\ot 1)(\alpha(x)\ot 1)
\end{align*}
and therefore we get 
\begin{align*}
    (\pi\ot\Id)\circ\wt{\alpha}((1\ot f)(1\ot\lambda_{s})\alpha(x))=&\ (\pi\ot\Id)((1\ot\beta_{G}(f))(1\ot\lambda_{s}\ot 1)(\alpha(x)\ot 1))\\
    =&\ (1\ot 1\ot\beta_{G}(f))\left(\pi((1\ot\lambda_{s})\alpha(x))\ot 1 \right) \\
    =&\  (1\ot 1\ot\beta_{G}(f))(\delta((1\ot\lambda_{s})\alpha(x))\ot 1)\\
    =&\ (1\ot 1\ot\beta_{G}(f))\wh{\delta}(\delta((1\ot\lambda_{s})\alpha(x)))\\
    =&\ (\Id\ot\Id\ot\beta_{G})( (1\ot 1\ot f)\delta((1\ot\lambda_{s})\alpha(x)) )\\
    =&\ (\Id\ot\Id\ot\beta_{G})( (1\ot 1\ot f)\pi((1\ot\lambda_{s})\alpha(x)) )\\
    =&\ \wh{\delta}\circ\pi((1\ot f)(1\ot\lambda_{s})\alpha(x)).
\end{align*}
\end{proof}

Since every $ \LI $-comodule is saturated (by Lemma \ref{c2}), Proposition \ref{p16} below can be proved in a similar manner as Proposition \ref{p15} using the Takesaki-duality theorem for W*-$ \LI $-comodules (see for example \cite[Chapter I, Theorem 2.5]{NaTa})  and so we omit its proof. Alternatively, Proposition \ref{p16} could be derived from \cite[Proposition 5.7]{Ha1} since one can easily verify that the notion of $ G $-completeness introduced by Hamana in \cite{Ha1} is equivalent to saturation for $ \LI $-comodules.

\begin{pro} \label{p16} Let $ (X,\alpha) $ be an $\LI $-comodule and let  $ \pi=(\mathrm{id}_{X}\ot \mathrm{Ad}V_{G})\circ(\alpha\ot \mathrm{id}_{\BLT})\colon X\vt\BLT\to X\vt\BLT\vt\BLT $. Then, the map $ \pi $ is an $ \LI $-comodule isomorphism from $ (X\vt\BLT,\wt{\alpha}) $ onto $ ( (X\fcr_{\alpha}G)\lt_{\delta}G,\ \wh{\delta} )  $, where $ \delta=\wh{\alpha}=(\mathrm{id}_{X}\ot\delta_{G})|_{X\fcr_{\alpha}G} $.
	In addition, $ \pi $ satisfies
	\[\pi\left( X\fcr_{\alpha}G\right) =\delta\left( X\fcr_{\alpha}G\right) . \] 		
\end{pro}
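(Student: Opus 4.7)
The approach mirrors that of Proposition \ref{p15}, reducing the statement to the classical Takesaki-duality for W*-$\LI$-comodules and applying Lemma \ref{c2} in place of an assumption of saturation. By Remark \ref{rem1}, Remark \ref{rem8} and Proposition \ref{pro2.14}, I may assume that $(X,\alpha)$ is an $\LI$-subcomodule of the canonical W*-$\LI$-comodule $(M,\alpha_M)=(B(H)\vt\LI,\,\Id_{B(H)}\ot\alpha_G)$ with $\alpha=\alpha_M|_X$. Then $\pi$ is the restriction to $X\vt\BLT$ of $\pi_M=(\Id_M\ot\Ad V_G)\circ(\alpha_M\ot\Id_{\BLT})$, and the classical Takesaki-duality theorem for W*-$\LI$-comodules (e.g.\ \cite[Chapter I, Theorem 2.5]{NaTa}) guarantees that $\pi_M$ is an $\LI$-comodule isomorphism from $(M\vt\BLT,\wt{\alpha_M})$ onto $\bigl((M\rtimes_{\alpha_M}G)\rtimes_{\wh{\alpha_M}}G,\,\wh{\wh{\alpha_M}}\bigr)$, and moreover $\pi_M|_{M\rtimes_{\alpha_M}G}=\wh{\alpha_M}$. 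By restriction, $\pi$ is then a w*-continuous $\LI$-comodule monomorphism intertwining $\wt\alpha$ with $\Id_X\ot\Id_{\BLT}\ot\beta_G$; the auxiliary identity $\pi(X\fcr_\alpha G)=\wh\alpha(X\fcr_\alpha G)$ follows at once from $X\fcr_\alpha G\subseteq M\rtimes_{\alpha_M}G$ and $\wh\alpha=\wh{\alpha_M}|_{X\fcr_\alpha G}$.

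Next I identify the relevant subspaces. Since $\wt\alpha=\wt{\alpha_M}|_{X\vt\BLT}$,
\[X\fcr_\alpha G=(X\vt\BLT)\cap(M\rtimes_{\alpha_M}G).\]
Combining this with $\wh\alpha=\wh{\alpha_M}|_{X\fcr_\alpha G}$ and the fact that $(M\rtimes_{\alpha_M}G)\rtimes_{\wh{\alpha_M}}G=((M\rtimes_{\alpha_M}G)\vt\BLT)^{\wt{\wh{\alpha_M}}}$ (the W*-version of Definition \ref{def4.2i}), I obtain
\[(X\fcr_\alpha G)\lt_{\wh\alpha}G=(X\vt\BLT\vt\BLT)\cap\bigl((M\rtimes_{\alpha_M}G)\rtimes_{\wh{\alpha_M}}G\bigr)=(X\vt\BLT\vt\BLT)\cap\pi_M(M\vt\BLT).\]
So the surjectivity statement $\pi(X\vt\BLT)=(X\fcr_\alpha G)\lt_{\wh\alpha}G$ reduces to the equality $\pi(X\vt\BLT)=(X\vt\BLT\vt\BLT)\cap\pi_M(M\vt\BLT)$.

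Using $\pi_M(y)=(1\ot V_G)(\alpha_M\ot\Id)(y)(1\ot V_G^*)$ together with the fact that $1\ot V_G\in\CI\vt\BLT\vt\BLT$ normalises $X\vt\BLT\vt\BLT$, conjugation by $1\ot V_G^*$ reduces this further to
\[\alpha(X)\vt\BLT=\bigl[(X\vt\BLT)\cap\alpha_M(M)\bigr]\vt\BLT.\]
Since $\alpha_M(M)\subseteq M\vt\LI$, the inner intersection equals $(X\vt\LI)\cap\alpha_M(M)$, which by the $\LI$-analogue of Remark \ref{rem7} (valid because $X\ft\LI=X\vt\LI$ by injectivity of $\LI$) coincides with $\mathrm{Sat}(X,\alpha)$; and Lemma \ref{c2} finally gives $\mathrm{Sat}(X,\alpha)=\alpha(X)$. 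The main effort here is the bookkeeping through several Fubini and spatial tensor closures; the substantive content — that saturation of $(X,\alpha)$ is exactly what is required, and holds automatically — is supplied by Lemma \ref{c2}.
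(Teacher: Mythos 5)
Your proposal is correct and is precisely the route the paper indicates for this omitted proof: transport the argument of Proposition \ref{p15} to the $\LI$-setting, using the classical Takesaki duality for the ambient W*-$\LI$-comodule $(B(H)\vt\LI,\Id\ot\alpha_G)$ to reduce the surjectivity of $\pi$ to the identity $\alpha(X)=(X\vt\LI)\cap\alpha_M(M)=\mathrm{Sat}(X,\alpha)$, which Lemma \ref{c2} supplies unconditionally. The intermediate reductions (the intersection formulas for Fubini products, the normalisation of $X\vt\BLT\vt\BLT$ by $1\ot V_G$, and the identification of the inner intersection with the saturation space) all check out.
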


The next simple corollary essentially states that, for an $ \LI $-comodule $ (X,\alpha) $, the saturation space of the spatial crossed product $ X\scr_{\alpha}G $ is isomorphic to the Fubini crossed product $ X\fcr_{\alpha}G $.

\begin{cor}\label{c11} For any $ \LI $-comodule $ (X,\alpha) $ we have:
	\begin{itemize}
		\item [(i)] $ (X\fcr_{\alpha}G)\lt_{\wh{\alpha}}G=(X\scr_{\alpha}G)\lt_{\wh{\alpha}}G $;
		\item [(ii)] $ \mathrm{Sat}(X\scr_{\alpha}G,\wh{\alpha})=\mathrm{Sat}\left(X\fcr_{\alpha}G,\wh{\alpha}\right)=\wh{\alpha}\left(X\fcr_{\alpha}G\right)  $.
	\end{itemize}	
\end{cor}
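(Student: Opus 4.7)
Part (i) will be an immediate consequence of Propositions \ref{p14} and \ref{p16}. Both assert that the same map
\[\pi=(\mathrm{id}_{X}\ot \mathrm{Ad}V_{G})\circ(\alpha\ot \mathrm{id}_{\BLT})\colon X\vt\BLT\longrightarrow X\vt\BLT\vt\BLT\]
is an $\LI$-comodule isomorphism from $(X\vt\BLT,\wt{\alpha})$ onto the double spatial crossed product $((X\scr_{\alpha}G)\lt_{\wh{\alpha}}G,\wh{\wh{\alpha}})$ and onto the double Fubini crossed product $((X\fcr_{\alpha}G)\lt_{\wh{\alpha}}G,\wh{\wh{\alpha}})$, respectively. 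Since both double crossed products coincide with the common image $\pi(X\vt\BLT)$ as subspaces of $X\vt\BLT\vt\BLT$, and since the dual $\LI$-action $\wh{\wh{\alpha}}$ is in both cases the restriction of $\mathrm{id}_{X}\ot\beta_{G}$, they coincide as $\LI$-comodules.

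For part (ii), I would apply Proposition \ref{p11} to the two $L(G)$-comodules $(X\scr_{\alpha}G,\wh{\alpha})$ and $(X\fcr_{\alpha}G,\wh{\alpha})$, which yields
\[\mathrm{Sat}(X\scr_{\alpha}G,\wh{\alpha})=\left((X\scr_{\alpha}G)\lt_{\wh{\alpha}}G\right)^{\wh{\wh{\alpha}}},\quad \mathrm{Sat}(X\fcr_{\alpha}G,\wh{\alpha})=\left((X\fcr_{\alpha}G)\lt_{\wh{\alpha}}G\right)^{\wh{\wh{\alpha}}}.\]
The fixed-point spaces on the right are equal by part (i), so the two saturation spaces on the left agree. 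For the remaining equality $\mathrm{Sat}(X\fcr_{\alpha}G,\wh{\alpha})=\wh{\alpha}(X\fcr_{\alpha}G)$, I would invoke Corollary \ref{c10}, which states precisely that $(X\fcr_{\alpha}G,\wh{\alpha})$ is a saturated $L(G)$-comodule.

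There is essentially no real obstacle here: the entire argument is a short synthesis of the two Takesaki-type dualities (Propositions \ref{p14} and \ref{p16}), the characterization of the saturation space as the fixed-point space of the dual action (Proposition \ref{p11}), and the saturation of the Fubini crossed product (Corollary \ref{c10}). The only point to be checked is that the dual $\LI$-actions on the two double crossed products genuinely agree; this is immediate from the fact that in both cases $\wh{\wh{\alpha}}$ is defined as the restriction of the same ambient action $\mathrm{id}_{X}\ot\beta_{G}$ to the (now common) double crossed product.
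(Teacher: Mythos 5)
Your proposal is correct and follows essentially the same route as the paper: part (i) is read off from Propositions \ref{p14} and \ref{p16} via the common isomorphism $\pi$, and part (ii) combines Proposition \ref{p11} (saturation space as fixed points of the dual action), part (i), and the saturation of $(X\fcr_{\alpha}G,\wh{\alpha})$ from Corollary \ref{c10}. No gaps.
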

\begin{proof} Statement (i) is an obvious consequence of Propositions \ref{p14} and \ref{p16}. So, we only need to show (ii). Indeed, we have 
	\begin{align*}
	\mathrm{Sat}(X\scr_{\alpha}G,\wh{\alpha})&=\left((X\scr_{\alpha}G)\lt_{\wh{\alpha}}G \right)^{\wh{\wh{\alpha}}} \\
	&=\left((X\fcr_{\alpha}G)\lt_{\wh{\alpha}} G\right)^{\wh{\wh{\alpha}}} \\
	&=\mathrm{Sat}(X\fcr_{\alpha}G,\wh{\alpha})\\
	&=\wh{\alpha}\left(X\fcr_{\alpha}G\right),
	\end{align*}
where both the first equality and the third equality follow from Proposition \ref{p11}, the second one follows from statement (i) and the fourth equality is because $ \left(X\fcr_{\alpha}G, \wh{\alpha}\right) $ is a saturated $ L(G) $-comodule by Corollary \ref{c10}.	
\end{proof}

\begin{thm}\label{thm2} Let $ (X,\alpha) $ be an $ \LI $-comodule. The following are equivalent:
	\begin{itemize}
		\item [(a)] $ X\fcr_{\alpha}G=X\scr_{\alpha}G $;
		\item [(b)] $ \left(X\fcr_{\alpha}G,\wh{\alpha}\right)  $ is a non-degenerate $ L(G) $-comodule;
		\item [(c)] $ (X\scr_{\alpha}G,\wh{\alpha} ) $ is a saturated $ L(G) $-comodule.
	\end{itemize} 
\end{thm}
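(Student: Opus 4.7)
The plan is to exploit Corollary \ref{c11}(ii), which provides the crucial identification
\[
\mathrm{Sat}(X\scr_{\alpha}G,\wh{\alpha}) = \wh{\alpha}(X\fcr_{\alpha}G),
\]
together with the fact from Corollary \ref{c10} that $(X\scr_{\alpha}G,\wh{\alpha})$ is always non-degenerate and $(X\fcr_{\alpha}G,\wh{\alpha})$ is always saturated as $L(G)$-comodules. With these in hand the entire theorem follows formally from the general material of Section \ref{sec3}, with no new hard analysis required.

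For the equivalence (a) $\iff$ (c): condition (c) reads $\wh{\alpha}(X\scr_{\alpha}G) = \mathrm{Sat}(X\scr_{\alpha}G,\wh{\alpha})$, which by the identification above is the same as $\wh{\alpha}(X\scr_{\alpha}G) = \wh{\alpha}(X\fcr_{\alpha}G)$. Since $\wh{\alpha}$ is a completely isometric comodule action and $X\scr_{\alpha}G \subseteq X\fcr_{\alpha}G$ (Proposition \ref{pro2.13}), this is equivalent to (a). The direction (a) $\implies$ (c) is also immediate from Corollary \ref{c10}, since if $X\scr_{\alpha}G = X\fcr_{\alpha}G$ the latter is already saturated.

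For the equivalence (b) $\iff$ (c): I would apply Proposition \ref{p2}(iii) to the $L(G)$-comodule $(X\scr_{\alpha}G,\wh{\alpha})$, which is already non-degenerate by Corollary \ref{c10}. Thus (c) (saturation of $(X\scr_{\alpha}G,\wh{\alpha})$) is equivalent to non-degeneracy of the canonical comodule $(\mathrm{Sat}(X\scr_{\alpha}G,\wh{\alpha}),\Id\ot\delta_{G})$. By Corollary \ref{c11}(ii) this coincides as an $L(G)$-comodule with $(\wh{\alpha}(X\fcr_{\alpha}G),\Id\ot\delta_{G})$, which is precisely the image of $(X\fcr_{\alpha}G,\wh{\alpha})$ under the comodule isomorphism induced by the action $\wh{\alpha}$ itself (as in Remark \ref{rem1}). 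Since non-degeneracy is preserved under comodule isomorphism (Remark \ref{rem8}), non-degeneracy of this image is equivalent to non-degeneracy of $(X\fcr_{\alpha}G,\wh{\alpha})$, which is (b).

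I do not anticipate a real obstacle: all the genuine content is already packaged in Corollary \ref{c11} (which in turn rests on the Takesaki-type results Propositions \ref{p14} and \ref{p16}) and in the abstract saturation/non-degeneracy machinery of Section \ref{sec3}. The theorem is essentially a reformulation of Corollary \ref{c11}(ii) in terms of the intrinsic structure of the crossed product comodules.
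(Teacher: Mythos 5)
Your proposal is correct and rests on exactly the same ingredients as the paper's proof: Corollary \ref{c10}, the identification $\mathrm{Sat}(X\scr_{\alpha}G,\wh{\alpha})=\wh{\alpha}(X\fcr_{\alpha}G)$ from Corollary \ref{c11}(ii), and the abstract saturation/non-degeneracy machinery of Section \ref{sec3}. The only (cosmetic) difference is that you obtain (b)$\iff$(c) by invoking Proposition \ref{p2}(iii) applied to the already non-degenerate comodule $(X\scr_{\alpha}G,\wh{\alpha})$, whereas the paper proves (b)$\implies$(a) directly via Proposition \ref{p1} and the inclusion $A(G)\cdot(X\fcr_{\alpha}G)\sub X\scr_{\alpha}G$ — but since Proposition \ref{p2}(iii) is itself proved from Proposition \ref{p1}, the underlying argument is the same.
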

\begin{proof} The implications (a)$ \implies $(b) and (a)$ \implies $(c) are immediate from Corollary \ref{c10}. Hence it remains to prove the implications (c)$ \implies $(a) and (b)$\implies$(a).
	
(c)$ \implies $(a): Let  $ (X\scr_{\alpha}G,\wh{\alpha} ) $ be saturated, that is by definition $ \mathrm{Sat}(X\scr_{\alpha}G,\wh{\alpha})=\wh{\alpha}(X\scr_{\alpha}G) $. So, Corollary \ref{c11} (ii) yields that $ \wh{\alpha}\left(X\fcr_{\alpha}G\right) =\wh{\alpha}\left(X\scr_{\alpha}G\right)  $ and therefore $ X\fcr_{\alpha}G=X\scr_{\alpha}G $, since $ \wh{\alpha} $ is isometric.

(b)$ \implies $(a): Let $ \left(X\fcr_{\alpha}G,\wh{\alpha}\right) $ be non-degenerate. Then, by Proposition \ref{p1}, it follows that 
\[X\fcr_{\alpha}G=\wsp\left\lbrace A(G)\cdot\left(X\fcr_{\alpha}G\right)\right\rbrace.  \]
On the other hand, from Corollary \ref{c11} (ii) we have
\[\wh{\alpha}\left(X\fcr_{\alpha}G \right)= \mathrm{Sat}(X\scr_{\alpha}G,\wh{\alpha})\sub \left(X\scr_{\alpha}G \right) \ft L(G) \] 
and thus, by the definition of the Fubini tensor product, we get
\[h\cdot y=(\Id_{X\fcr_{\alpha}G}\ot h)(\wh{\alpha}(y))\in X\scr_{\alpha}G,\quad\forall h\in A(G),\ \forall y\in X\fcr_{\alpha}G, \]	
that is $ A(G)\cdot\left(X\fcr_{\alpha}G\right)\sub X\scr_{\alpha}G  $. Therefore \[X\fcr_{\alpha}G=\wsp\left\lbrace A(G)\cdot\left(X\fcr_{\alpha}G\right)\right\rbrace\sub X\scr_{\alpha}G,\] which yields the equality $ X\scr_{\alpha}G=X\fcr_{\alpha}G $, since $ X\scr_{\alpha}G\sub X\fcr_{\alpha}G $ is always true (see Proposition \ref{pro2.13}).
\end{proof}

\begin{remark} The equivalence between (a) and (b) in Theorem \ref{thm2} above is exactly \cite[Theorem 3.17]{DA} and therefore Theorem \ref{thm2} improves \cite[Theorem 3.17]{DA}.  It should be noted that the proof of \cite[Theorem 3.17]{DA} is based on an idea similar to that of Proposition \ref{p10}.	
\end{remark}

\begin{lem}\label{l4} For any $ L(G) $-comodule $ (Y,\delta) $, $ (\mathrm{Sat}(Y,\delta),\Id_{Y}\ot\delta_{G}) $ is a saturated $ L(G) $-comodule.	
\end{lem}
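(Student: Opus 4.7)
By Proposition \ref{p2}(i), $\mathrm{Sat}(Y,\delta)$ is an $L(G)$-subcomodule of the canonical $L(G)$-comodule $(Y \ft L(G),\, \Id_Y \ot \delta_G)$, so $\Id_Y \ot \delta_G|_{\mathrm{Sat}(Y,\delta)}$ is a well-defined $L(G)$-action. The plan is to show directly that every element of the saturation of $(\mathrm{Sat}(Y,\delta),\, \Id_Y \ot \delta_G|)$ already lies in the image of this action. Concretely, given $z \in \mathrm{Sat}(Y,\delta) \ft L(G) \sub Y \ft L(G) \ft L(G)$ with
\[(\Id_Y \ot \delta_G \ot \Id_{L(G)})(z) = (\Id_Y \ot \Id_{L(G)} \ot \delta_G)(z),\]
I need to produce $w \in \mathrm{Sat}(Y,\delta)$ satisfying $(\Id_Y \ot \delta_G)(w) = z$.

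The first step is to observe that the canonical comodule $(Y \ft L(G),\, \Id_Y \ot \delta_G)$ is itself saturated: $(L(G),\delta_G)$ is a W*-$L(G)$-comodule and hence saturated by Remark \ref{rem7}, so Lemma \ref{l1} applies. Applied to $z$, this yields a (unique, since $\Id_Y \ot \delta_G$ is a complete isometry) $w \in Y \ft L(G)$ with $z = (\Id_Y \ot \delta_G)(w)$. It remains to verify $(\delta \ot \Id_{L(G)})(w) = (\Id_Y \ot \delta_G)(w)$.

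For this I would use the hypothesis that $z \in \mathrm{Sat}(Y,\delta) \ft L(G)$: for every $\omega \in L(G)_*$, the Fubini slice $(\Id_Y \ot \Id_{L(G)} \ot \omega)(z) = (\Id_Y \ot L_\omega)(w)$ lies in $\mathrm{Sat}(Y,\delta)$, where $L_\omega := (\Id_{L(G)} \ot \omega)\circ\delta_G \colon L(G) \to L(G)$. Writing out the saturation condition for $(\Id_Y \ot L_\omega)(w)$ and applying the coassociativity identity
\[\delta_G \circ L_\omega = (\Id_{L(G)} \ot L_\omega)\circ \delta_G,\]
which follows from $(\delta_G \ot \Id)\delta_G = (\Id \ot \delta_G)\delta_G$ by slicing the third factor with $\omega$, I would obtain
\[(\Id_Y \ot \Id_{L(G)} \ot L_\omega)\bigl((\delta \ot \Id_{L(G)})(w) - (\Id_Y \ot \delta_G)(w)\bigr) = 0\quad\text{for all }\omega\in L(G)_*.\]
Setting $A = (\delta \ot \Id_{L(G)})(w) - (\Id_Y \ot \delta_G)(w)$ and expanding $L_\omega = (\Id \ot \omega)\delta_G$, this reads $(\Id\ot\Id\ot\Id\ot\omega)(\Id\ot\Id\ot\delta_G)(A)=0$ for all $\omega$; since $L(G)_*$ separates points of $L(G)$ and $\Id\ot\Id\ot\delta_G$ is injective, we conclude $A=0$, i.e., $w \in \mathrm{Sat}(Y,\delta)$.

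The main obstacle will be the tensor-leg bookkeeping in the third paragraph: one must move $L_\omega$ past the application of $\delta_G$ on the second leg of $w$ (via the coassociativity identity) so that on both sides of the saturation equation $L_\omega$ ultimately acts on the same—namely the last—tensor factor, at which point the separation argument based on the injectivity of $\delta_G$ and the fact that $L(G)_*$ is norming for $L(G)$ finishes the proof.
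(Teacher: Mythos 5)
Your proof is correct, but it takes a genuinely different route from the paper's. The paper first replaces $(Y,\delta)$ by $(\delta(Y),\Id_Y\ot\delta_G)$ inside a W*-$L(G)$-comodule $N=B(K)\vt L(G)$ (via Remarks \ref{rem1} and \ref{rem7iii}), writes $\mathrm{Sat}(\delta(Y),\Id_Y\ot\delta_G)=(\Id_{B(K)}\ot\delta_G)(N)\cap(\delta(Y)\ft L(G))$ using Remark \ref{rem7}, observes that each of the two pieces is a saturated subcomodule of the canonical comodule (the first as a W*-subcomodule, the second by Lemma \ref{l1}), and concludes because an intersection of saturated subcomodules is saturated. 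You instead stay with $Y$ itself: you use Lemma \ref{l1} only to solve $z=(\Id_Y\ot\delta_G)(w)$ for $w\in Y\ft L(G)$, and then verify $w\in\mathrm{Sat}(Y,\delta)$ by a direct slice computation, pushing $L_\omega=(\Id_{L(G)}\ot\omega)\circ\delta_G$ through coassociativity and invoking injectivity of $\Id\ot\Id\ot\delta_G$ together with the fact that slices against $L(G)_*$ on the last leg separate points of $B(K)\ft L(G)$. Both arguments ultimately rest on the same two inputs --- saturation of the W*-comodule $(L(G),\delta_G)$ and Lemma \ref{l1} --- but the paper's is shorter and more structural, while yours avoids the embedding into a von Neumann algebra and the intersection formula for $\mathrm{Sat}$, at the cost of the tensor-leg bookkeeping you flag; that bookkeeping does go through exactly as you describe, so there is no gap.
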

\begin{proof} Take an $ L(G) $-comodule $ (Y,\delta) $ and suppose that $ Y $ is a w*-closed subspace of $ B(K) $ for some Hilbert space $ K $. Since $ (Y,\delta) \simeq (\delta(Y),\Id_{Y}\ot\delta_{G}) $ (see Remark \ref{rem1}), it follows that $( \mathrm{Sat}(Y,\delta),\Id_{Y}\ot\delta_{G}) \simeq \left( \mathrm{Sat}\left( \delta(Y),\Id_{Y}\ot\delta_{G} \right),\Id_{\delta(Y)}\ot\delta_{G}\right)   $ (by Remark \ref{rem7iii}) and thus it suffices to prove that $ \left( \mathrm{Sat}\left( \delta(Y),\Id_{Y}\ot\delta_{G} \right),\Id_{\delta(Y)}\ot\delta_{G}\right)   $ is saturated (again by Remark \ref{rem7iii}).	Indeed, by Remark \ref{rem7}, we have:
\[\mathrm{Sat}\left( \delta(Y),\Id_{Y}\ot\delta_{G} \right)=(\Id_{B(K)}\ot\delta_{G})(B(K)\vt L(G))\cap(\delta(Y)\ft L(G)). \]

On the other hand, $ (\Id_{B(K)}\ot\delta_{G})(B(K)\vt L(G))$ is a W*-$ L(G) $-subcomodule of $ (B(K)\vt L(G)\vt L(G),\Id_{B(K)}\ot\Id_{L(G)}\ot\delta_{G}) $ and hence saturated and $\delta(Y)\ft L(G) $ is a saturated $ L(G) $-subcomodule of $ (B(K)\vt L(G)\vt L(G),\Id_{B(K)}\ot\Id_{L(G)}\ot\delta_{G}) $ by Lemma \ref{l1} since $ (L(G),\delta_{G}) $ is saturated (as a W*-$ L(G) $-comodule). Therefore, the desired conclusion follows from the fact that the intersection of saturated subcomodules is clearly saturated too.
\end{proof}		

We are now in position to prove Theorem \ref{thm4} below which complements Proposition \ref{p9}. It should be noted that the implication (i)$ \implies $(iii) in Theorem \ref{thm4} is essentially \cite[Corollary 4.8]{CN} and thus Theorem \ref{thm4} is a stronger result. It is very interesting that Crann and Neufang use a quite different approach to this based on their Fej\'{e}r-type theorem for elements in crossed products of groups with the AP (see \cite{CN} for more details) which seems to be independent of the Takesaki-duality. However, from the proof of Theorem \ref{thm4}, it becomes clear that the Takesaki-duality not only helps us to prove the converse of \cite[Corollary 4.8]{CN}, but also it yields an alternative proof of \cite[Corollary 4.8]{CN} itself via Theorem \ref{thm2}.

\begin{thm} \label{thm4} For a locally compact group $ G $ the following conditions are equivalent:
	\begin{itemize}
		\item [(i)] $ G $ has the AP;
		\item [(ii)] $ (Y\lt_{\delta}G)\fcr_{\wh{\delta}}G=(Y\lt_{\delta}G)\scr_{\wh{\delta}}G $, for any $ L(G) $-comodule $ (Y,\delta) $;
		\item [(iii)] $ X\fcr_{\alpha}G=X\scr_{\alpha}G $, for any $ \LI $-comodule $ (X,\alpha) $.
	\end{itemize}	
\end{thm}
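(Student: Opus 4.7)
My plan is to establish the cyclic chain (i) $\Rightarrow$ (iii) $\Rightarrow$ (ii) $\Rightarrow$ (i), with the first two implications being short and (ii) $\Rightarrow$ (i) being the main step.

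For (i) $\Rightarrow$ (iii), let $(X,\alpha)$ be any $\LI$-comodule. By Corollary \ref{c10}, $(X\fcr_{\alpha}G,\wh{\alpha})$ is an $L(G)$-comodule; if $G$ has the AP, Proposition \ref{p9} makes it non-degenerate, and then the equivalence (a) $\Leftrightarrow$ (b) in Theorem \ref{thm2} gives $X\fcr_{\alpha}G = X\scr_{\alpha}G$. For (iii) $\Rightarrow$ (ii), observe that for any $L(G)$-comodule $(Y,\delta)$ the pair $(Y\lt_{\delta}G,\wh{\delta})$ is by construction an $\LI$-comodule (Definition \ref{def4.2i}), so applying (iii) to it delivers (ii) immediately.

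For (ii) $\Rightarrow$ (i) the strategy is to show that (ii) forces every $L(G)$-comodule to be non-degenerate, whence Proposition \ref{p9} supplies the AP. Fix an arbitrary $L(G)$-comodule $(Y,\delta)$ with $Y\sub B(H)$, and consider the map
\[
\pi = (\Id_{Y}\ot \Ad W_{\Lambda})\circ(\delta\ot \Id_{\BLT})\colon Y\vt\BLT \to Y\vt\BLT\vt\BLT
\]
appearing in Propositions \ref{p4} and \ref{p15}. Two unconditional facts will drive the argument. First, by the intermediate identity $(Y\lt_{\delta}G)\fcr_{\wh{\delta}}G = (Y\vt\BLT\vt\BLT)\cap \pi(N\vt\BLT)$ derived inside the proof of Proposition \ref{p15} (where $N\supseteq Y$ is any ambient W*-$L(G)$-comodule), one has $\pi(Y\vt\BLT)\sub (Y\lt_{\delta}G)\fcr_{\wh{\delta}}G$ regardless of whether $(Y,\delta)$ is saturated. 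Second, formula (**) in the proof of Proposition \ref{p4},
\[
\pi\bigl((1\ot\rho_{t}f)\delta(y)(1\ot g\rho_{s})\bigr) = (1\ot 1\ot\lambda_{t})\,\wh{\delta}\bigl((1\ot f)\delta(y)(1\ot g)\bigr)(1\ot 1\ot\lambda_{s}),
\]
identifies generators; as $s,t,f,g,y$ vary and using $\BLT = \wsp\{\LI R(G)\} = \wsp\{R(G)\LI\}$, the left-hand sides span $\pi\bigl(\wsp\{(\CI\vt\BLT)\delta(Y)(\CI\vt\BLT)\}\bigr)$ while the right-hand sides span $(Y\lt_{\delta}G)\scr_{\wh{\delta}}G$, giving
\[
(Y\lt_{\delta}G)\scr_{\wh{\delta}}G = \pi\bigl(\wsp\{(\CI\vt\BLT)\delta(Y)(\CI\vt\BLT)\}\bigr).
\]
Now hypothesis (ii) forces the two double crossed products to coincide, so $\pi(Y\vt\BLT) \sub \pi\bigl(\wsp\{(\CI\vt\BLT)\delta(Y)(\CI\vt\BLT)\}\bigr)$, and since $\pi$ is a complete isometry (hence injective) we deduce $Y\vt\BLT \sub \wsp\{(\CI\vt\BLT)\delta(Y)(\CI\vt\BLT)\}$. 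The reverse inclusion is trivial (Remark \ref{rem5}), so equality holds and Corollary \ref{cor6.2} pronounces $(Y,\delta)$ non-degenerate.

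The only real obstacle lies in noticing that the two inclusions for $\pi$ used above are genuinely unconditional, rather than needing the non-degeneracy or saturation hypotheses of Propositions \ref{p4} and \ref{p15}. Both are already present in passing within those proofs, so once they are extracted the argument reduces to the clean injectivity implication $\pi(A)\sub\pi(B)\Rightarrow A\sub B$.
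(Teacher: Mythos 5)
Your proof is correct, and your implications (i)$\implies$(iii) and (iii)$\implies$(ii) coincide with the paper's. For (ii)$\implies$(i) you take a genuinely different route. The paper first deduces from (ii) that every \emph{saturated} $L(G)$-comodule is non-degenerate (using Propositions \ref{p15} and \ref{p4} exactly as stated), and then transfers this to an arbitrary comodule $(Z,\varepsilon)$ by passing to $(\mathrm{Sat}(Z,\varepsilon),\Id_{Z}\ot\delta_{G})$, which requires the extra Lemma \ref{l4} (saturation spaces are saturated) together with Proposition \ref{p2}(iii). You instead observe that the two inclusions
\[\pi(Y\vt\BLT)\sub(Y\lt_{\delta}G)\fcr_{\wh{\delta}}G\qquad\text{and}\qquad(Y\lt_{\delta}G)\scr_{\wh{\delta}}G=\pi\bigl(\wsp\{(\CI\vt\BLT)\delta(Y)(\CI\vt\BLT)\}\bigr)\]
are established unconditionally inside the proofs of Propositions \ref{p15} and \ref{p4} respectively: the first follows from the identity $(Y\lt_{\delta}G)\fcr_{\wh{\delta}}G=(Y\vt\BLT\vt\BLT)\cap\pi(N\vt\BLT)$ (whose derivation never invokes saturation) together with $\pi(Y\vt\BLT)\sub Y\vt\BLT\vt\BLT$; the second follows from the covariance formula \eqref{(**)} combined with $Y\lt_{\delta}G=Y\dscr_{\delta}G$ (Theorem \ref{thm1}), separate w*-continuity of multiplication, and the fact that $\pi$, being a w*-continuous complete isometry, carries w*-closed spans to w*-closed spans. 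Injectivity of $\pi$ then yields non-degeneracy of an arbitrary $(Y,\delta)$ in one stroke, and Proposition \ref{p9} finishes. This buys a shorter argument that avoids Lemma \ref{l4} entirely, at the cost of re-opening the proofs of Propositions \ref{p4} and \ref{p15} rather than citing them as black boxes; both versions are sound, and there is no circularity since Theorems \ref{thm1}, \ref{thm2} and all cited propositions precede Theorem \ref{thm4}.
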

\begin{proof} The implication (iii)$ \implies $(ii) is obvious. Also, the implication (i)$ \implies $(iii) is a direct consequence of Proposition \ref{p9} and Theorem \ref{thm2}. Therefore, it suffices to prove that (ii) implies (i).
	
Suppose that condition (ii) is true. Then, every saturated $ L(G) $-comodule is non-degenerate. Indeed, if $ (Y,\delta) $ is a saturated $ L(G) $-comodule, then Proposition \ref{p15} yields that 
\[(Y\lt_{\delta}G)\fcr_{\wh{\delta}}G=\pi(Y\vt\BLT), \]
where $ \pi=(\mathrm{id}_{Y}\ot \mathrm{Ad}W_{\Lambda})\circ(\delta\ot \mathrm{id}_{\BLT}) $. Thus, since $ (Y\lt_{\delta}G)\fcr_{\wh{\delta}}G=(Y\lt_{\delta}G)\scr_{\wh{\delta}}G $	by assumption, it follows that $ (Y\lt_{\delta}G)\scr_{\wh{\delta}}G=\pi(Y\vt\BLT) $, which in turn implies that $ (Y,\delta) $ is non-degenerate (by Proposition \ref{p4}).

Hence, if condition (ii) holds and $ (Z,\varepsilon) $ is an arbitrary $ L(G) $-comodule, then the $ L(G) $-comodule $ (\mathrm{Sat}(Z,\varepsilon),\Id_{Z}\ot\delta_{G}) $ is non-degenerate, because $ (\mathrm{Sat}(Z,\varepsilon),\Id_{Z}\ot\delta_{G}) $ is saturated (by Lemma \ref{l4}) and every saturated $ L(G) $-comodule is non-degenerate by the previous argument. Therefore, by Proposition \ref{p2}, we get that $ (Z,\varepsilon) $ is non-degenerate.

So, it follows that condition (ii) implies that every $ L(G) $-comodule is\mbox{ } non-degenerate and thus Proposition \ref{p9} yields that (ii) implies (i).	 
\end{proof}

\section{Realization of masa bimodules and harmonic operators as crossed products}\label{sec6} 

Let $ J $ be a closed ideal of the Fourier algebra $ A(G) $ and let $ J^{\perp} $ be its annihilator in $ L(G) $. Then, it is easy to see that $ J^{\perp} $ is an $ L(G) $-subcomodule of $ (L(G),\delta_{G}) $, that is $ \delta_{G}(J^{\perp})\sub J^{\perp}\ft L(G) $. In fact, every $ L(G) $-subcomodule of $ (L(G),\delta_{G}) $ arises this way by taking the preannihilator in  $ A(G) $.

Following Anoussis, Katavolos and Todorov (see \cite{AKT1}), one can define two w*-closed $ \LI $-bimodules inside $ \BLT $, namely:
\[\B:=\wsp\{fTg:\ T\in J^{\perp},\ f,g\in \LI \} \]
and
\[\Sa:=\bigcap\{\ker S_{u}:\ u\in J \},\]
where 
\[ S_{u}(T)=(\Id\ot u)\circ\delta_{G}(T)=u\cdot T,\text{ for any }  T\in\BLT \text{ and } u\in A(G).\]
That is, $ S_{u} $ is the (unique) completely bounded w*-continuous $ \LI $-bimodule map on $ \BLT $ which maps $ f\lambda_{s} $ to $ u(s)f\lambda_{s} $ for all $ s\in G $ and $ f\in \LI $.

Note that the original definition of $ \Sa $ (denoted $ (\mathrm{Sat}J)^{\perp} $ by the authors) in \cite{AKT1} is different from ours. However, one can easily verify the equivalence of the two definitions using \cite[Proposition 3.1]{AKT1} and the comments and definitions of \cite[Section 2]{AKT1}. 

According to the next theorem, which is one of the main results of \cite{AKT1}, the above $ \LI $-bimodules are equal. 

\begin{thm}\cite[Theorem 3.2]{AKT1} \label{thmAKT} For any closed ideal $ J $ of the Fourier algebra $ A(G) $ it holds that $ \Sa=\B $. 
	
\end{thm}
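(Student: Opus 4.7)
The plan is to apply Theorem \ref{thm1} to the $L(G)$-subcomodule $(J^{\perp},\delta)$ of $(L(G),\delta_{G})$, where $\delta:=\delta_{G}|_{J^{\perp}}$, by constructing a single normal injective $*$-homomorphism $\Phi:=\sigma\circ\delta_{G}\colon\BLT\to L(G)\vt\BLT$ which simultaneously identifies $\B$ with the spatial crossed product $J^{\perp}\dscr_{\delta}G$ and $\Sa$ with the Fubini crossed product $J^{\perp}\dfcr_{\delta}G$. The injectivity of $\Phi$ will then convert the equality of crossed products supplied by Theorem \ref{thm1} into the equality $\Sa=\B$.

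First I would record two basic facts underpinning $\Phi$. Since $\BLT^{\delta_{G}}=\LI$, we have $\delta_{G}(f)=f\ot 1$ for $f\in\LI$, so $\Phi(f)=1\ot f$. Using that $\delta_{G}(\lambda_{s})=\lambda_{s}\ot\lambda_{s}$ is symmetric together with the w*-density of $\mathrm{span}\{\lambda_{s}:s\in G\}$ in $L(G)$ and the normality of $\sigma$ and $\delta_{G}$, we obtain the cocommutativity $\sigma\circ\delta_{G}|_{L(G)}=\delta_{G}|_{L(G)}$, whence $\Phi(x)=\delta_{G}(x)$ for $x\in L(G)$. Since $\LI$ and $L(G)$ generate $\BLT$ as a von Neumann algebra and $\Phi$ is a normal $*$-homomorphism, its image $\Phi(\BLT)$ is the von Neumann subalgebra of $L(G)\vt\BLT$ generated by $\delta_{G}(L(G))$ and $1\ot\LI$, namely the W*-crossed product $L(G)\lt_{\delta_{G}}G$.

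For the spatial side, a multiplicative calculation gives $\Phi(fTg)=(1\ot f)\delta(T)(1\ot g)$ for all $T\in J^{\perp}$ and $f,g\in\LI$; these are exactly the generators of $J^{\perp}\dscr_{\delta}G$. Because $\Phi$ is a w*-bicontinuous complete isometry onto its image, it preserves w*-closed linear spans, and hence $\Phi(\B)=J^{\perp}\dscr_{\delta}G$.

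For the Fubini side, I would first characterize $\Sa$ concretely: $T\in\Sa$ iff $(\Id\ot u)\delta_{G}(T)=0$ for all $u\in J$, which by the defining slice property of the Fubini tensor product (and the equality $\BLT\ft J^{\perp}=\BLT\vt J^{\perp}$, coming from injectivity of $\BLT$) is equivalent to $\delta_{G}(T)\in\BLT\vt J^{\perp}$, i.e., $\Phi(T)\in J^{\perp}\vt\BLT$. Combining with $\Phi(\BLT)=L(G)\lt_{\delta_{G}}G$ and the fact that $\widetilde{\delta}=\widetilde{\delta_{G}}|_{J^{\perp}\vt\BLT}$, we get
\[\Phi(\Sa)=(L(G)\lt_{\delta_{G}}G)\cap(J^{\perp}\vt\BLT)=J^{\perp}\dfcr_{\delta}G.\]
Theorem \ref{thm1} now gives $J^{\perp}\dfcr_{\delta}G=J^{\perp}\dscr_{\delta}G$, so $\Phi(\Sa)=\Phi(\B)$, and injectivity of $\Phi$ yields $\Sa=\B$. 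The main subtlety is setting up $\Phi$ so that both crossed products sit naturally inside $\Phi(\BLT)=L(G)\lt_{\delta_{G}}G$, and this rests precisely on the cocommutativity of $\delta_{G}$ on $L(G)$; once that is in hand, the rest is bookkeeping plus an invocation of Theorem \ref{thm1}.
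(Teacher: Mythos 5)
Your proposal is correct and follows essentially the same route as the paper: the paper's map $\Phi=\mathrm{Ad}[(U_{G}W_{\Lambda})^{*}](\,\cdot\ot 1)$ is exactly $\sigma\circ\delta_{G}$, and the paper likewise shows $\Phi(\B)=J^{\perp}\dscr_{\delta_{G}}G$ by matching generators and $\Phi(\Sa)=J^{\perp}\dfcr_{\delta_{G}}G$ by intersecting $\Phi(\BLT)=L(G)\lt_{\delta_{G}}G$ with $J^{\perp}\vt\BLT$ (using $S_{u}=(u\ot\Id)\circ\Phi$), before invoking Theorem \ref{thm1}. The only difference is that the paper additionally verifies $\Phi$ is an $\LI$-comodule monomorphism, which is used later but not needed for this equality.
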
 

\begin{remark}\label{rem10} If $ \mu $ is a probability measure on the locally compact group $ G $, a Borel function $ f\colon g\to \mathbb{C} $ such that
	\[f(s)=\int_{G} f(t^{-1}s)d\mu(t),\quad\text{ for all } s\in G, \]
is called $ \mu $-harmonic \cite{CL}. On the other hand, considering the dual pair $ (A(G),L(G)) $ as the non-commutative replacement of the dual pair $ (L^{1}(\wh{G}),L^{\infty}(\wh{G})) $ for abelian $ G $, Chu and Lau introduced the non-commutative analogue of harmonic functions in terms of functionals on $ A(G) $. In particular, if $ \sigma $ is a function in the Fourier-Stieltjes algebra $ B(G) $, then the space $ \cl{H}_{\sigma} $ of $ \sigma $-harmonic functionals on $ A(G) $ is defined \cite{CL} as the annihilator in $ L(G) $ of the closed ideal of $ A(G) $ generated by the elements of the form $ \sigma h-h, $ for $ h\in A(G) $.

Neufang and Runde \cite{NR} introduced the $ \sigma $-harmonic operators $ \wt{\cl{H}}_{\sigma} $ through a suitable extension of the action of $ \sigma $ on $ \BLT $. They proved that $ \wt{\cl{H}}_{\sigma}$ is the von Neumann algebra generated by $\LI$ and $\cl{H}_{\sigma}$.

The above result of Neufang and Runde was later generalized by Anoussis, Katavolos and Todorov in \cite{AKT2}. More precisely, for a family $ \Sigma $ of completely bounded multipliers of $ A(G) $, they defined the space $ \cl{H}_{\Sigma} $ of jointly $ \Sigma $-harmonic functionals and the space $ \wt{\cl{H}}_{\Sigma} $ of jointly $ \Sigma $-harmonic operators. They proved that $ \wt{\cl{H}}_{\Sigma} $ is the w*-closed $ \LI $-bimodule generated by $ \cl{H}_{\Sigma} $ by using \cite[Theorem 3.2]{AKT1} (that is Theorem \ref{thmAKT} above) and the fact that $ \wt{\cl{H}}_{\Sigma} $ is of the form $ \B $ for the ideal $ J=\Sigma A(G) $ of $ A(G) $ generated by the elements $ \sigma u $ for $ \sigma\in\Sigma $ and $ u\in A(G) $. 

Furthermore, from  \cite[Proposition 2.15]{AKT2} it follows that the w*-closed subspaces of $ \BLT $ of the form $ \B $ for some closed ideal $ J $ of $ A(G) $ are exactly those of the form $ \wt{\cl{H}}_{\Sigma}  $ for some family $ \Sigma\sub M_{0}A(G) $. 
\end{remark}

In the following, we are going to show that $ \B $ or equivalently $ \wt{\cl{H}}_{\Sigma}  $ can be realized as the crossed product $ J^{\perp}\lt_{\delta_{G}}G $ of the $ L(G) $-comodule $ (J^{\perp},\delta_{G}) $ (see Proposition \ref{p12}). In other words, we show that harmonic operators are examples of crossed products of $ L(G) $-comodules, which are not necessarily von Neumann algebras.

This complements the identification of analogous $ L(G) $-bimodules introduced in \cite{AKT} with spatial and Fubini crossed products by $ G $, obtained in \cite[Proposition 5.1]{DA}.

Furthermore, thanks to Proposition \ref{p12} we obtain an alternative proof of \cite[Theorem 3.2]{AKT1} by a direct application of Theorem \ref{thm1}.

\begin{pro}\label{p12}   The normal *-monomorphism  \[\Phi\colon\BLT\to\BLT\vt\BLT\]  defined by
	\[\Phi(x)=W_{\Lambda}^{*}\beta_{G}(T)W_{\Lambda}=\Ad[(U_{G}W_{\Lambda})^{*}](x\ot 1),\quad T\in \BLT, \]
	is an $ \LI $-comodule monomorphism with respect to the $ \LI $-actions $ \beta_{G} $ and $ \Id_{\BLT}\ot\beta_{G} $ on $ \BLT $ and $ \BLT\vt \BLT $ respectively. Also, if $ J$ is a closed ideal of $ A(G) $, then $ \Phi $ maps $ \B $ onto $ J^{\perp}\dscr_{\delta_{G}}G $ and $ \Sa $ onto	$ J^{\perp}\dfcr_{\delta_{G}}G $.	
	Therefore, $ \B=\Sa $.	
\end{pro}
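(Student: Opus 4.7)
The plan is to first simplify $\Phi$, then identify $\Phi(\B)$ and $\Phi(\Sa)$ with the spatial and Fubini crossed products of $(J^{\perp},\delta_{G})$ respectively, and finally apply Theorem \ref{thm1} together with the injectivity of $\Phi$.

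Using Remark \ref{remW}, $U_{G}W_{\Lambda}=W_{G}S$ with $S$ the tensor-flip unitary on $L^{2}(G\times G)$, so $\Phi(T)=SW_{G}^{*}(T\ot 1)W_{G}S=\sigma(\delta_{G}(T))$; equivalently, $\Phi=\sigma\circ\delta_{G}$. This exhibits $\Phi$ as the composition of the normal $*$-monomorphism $\delta_{G}\colon\BLT\to\BLT\vt L(G)$ (conjugation by the unitary $W_G$) with the flip $\sigma$, hence $\Phi$ is a normal $*$-monomorphism from $\BLT$ into $L(G)\vt\BLT$. For the $\LI$-comodule morphism property, both $(\mathrm{id}_{\BLT}\ot\beta_{G})\circ\Phi$ and $(\Phi\ot\mathrm{id}_{\LI})\circ\beta_{G}$ are normal $*$-homomorphisms on $\BLT$, so it suffices to check their equality on $L(G)\cup\LI$, which generates $\BLT$ as a von Neumann algebra. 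On $L(G)$: $\beta_{G}(x)=x\ot 1$ (because $\BLT^{\beta_{G}}=L(G)$) and $\delta_{G}|_{L(G)}$ is cocommutative, so both sides reduce to $\Phi(x)\ot 1$. On $\LI$: $\delta_{G}(f)=f\ot 1$ gives $\Phi(f)=1\ot f$, and using $\beta_{G}(\LI)\subseteq\LI\vt\LI$ both sides reduce to $1\ot\beta_{G}(f)$.

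For $T\in J^{\perp}\subseteq L(G)$, cocommutativity of $\delta_{G}|_{L(G)}$ gives $\Phi(T)=\delta_{G}(T)$; combined with $\Phi(f)=1\ot f$ and multiplicativity, this yields $\Phi(fTg)=(1\ot f)\delta_{G}(T)(1\ot g)$, whose w*-closed linear span is by definition $J^{\perp}\dscr_{\delta_{G}}G$, hence $\Phi(\B)=J^{\perp}\dscr_{\delta_{G}}G$. For $\Phi(\Sa)$, first observe that $\Phi(\BLT)$ is the w*-closed $*$-subalgebra of $L(G)\vt\BLT$ generated by $\Phi(L(G))=\delta_{G}(L(G))$ and $\Phi(\LI)=\mathbb{C}1\vt\LI$, hence equals the W*-crossed product $L(G)\rtimes_{\delta_{G}}G=L(G)\dfcr_{\delta_{G}}G=(L(G)\vt\BLT)^{\widetilde{\delta_{G}}}$ (the dual analogue of Remark \ref{rem9}). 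Next, since $\BLT$ is injective, $J^{\perp}\vt\BLT=J^{\perp}\ft\BLT$, and the Fubini characterization yields, for any $T\in\BLT$,
\[
\Phi(T)\in J^{\perp}\vt\BLT \iff (\phi\ot\mathrm{id})\delta_{G}(T)\in J^{\perp}\ \forall\phi\in\BLT_{*} \iff S_{u}(T)=0\ \forall u\in J \iff T\in\Sa,
\]
the middle equivalence following from $\langle(\phi\ot\mathrm{id})\delta_{G}(T),u\rangle=\phi(S_{u}(T))$ and the separating property of $\BLT_{*}$. Hence $\Phi(\Sa)=\Phi(\BLT)\cap(J^{\perp}\vt\BLT)=L(G)\dfcr_{\delta_{G}}G\cap(J^{\perp}\vt\BLT)=(J^{\perp}\vt\BLT)^{\widetilde{\delta_{G}}}=J^{\perp}\dfcr_{\delta_{G}}G$. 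Applying Theorem \ref{thm1} to $(J^{\perp},\delta_{G})$ gives $\Phi(\B)=J^{\perp}\dscr_{\delta_{G}}G=J^{\perp}\dfcr_{\delta_{G}}G=\Phi(\Sa)$, and injectivity of $\Phi$ yields $\B=\Sa$.

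The main obstacle is the identification $\Phi(\BLT)=L(G)\dfcr_{\delta_{G}}G$: morally it is the classical fact $L(G)\rtimes_{\delta_{G}}G\cong\BLT$, but one has to verify both the containment $\Phi(\BLT)\subseteq(L(G)\vt\BLT)^{\widetilde{\delta_{G}}}$ (readily checked on the generators $\delta_{G}(L(G))$ and $\mathbb{C}1\vt\LI$, on which the fixed-point relation under $\widetilde{\delta_{G}}$ holds by direct computation with $W_G$) and that the W*-crossed product coincides with the dual-operator-space Fubini crossed product of Definition \ref{def4.2i}, which is the dual analogue of Remark \ref{rem9} for W*-$L(G)$-comodules.
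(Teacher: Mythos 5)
Your proposal is correct and follows essentially the same route as the paper: compute $\Phi$ on the generators $\lambda_{s}$ and $f\in\LI$ (the paper does this directly from $U_{G}W_{\Lambda}=W_{G}S$ rather than writing $\Phi=\sigma\circ\delta_{G}$, but it is the same computation), identify $\Phi(\B)$ with $J^{\perp}\dscr_{\delta_{G}}G$ on generators, identify $\Phi(\Sa)$ with $\Phi(\BLT)\cap(J^{\perp}\vt\BLT)=J^{\perp}\dfcr_{\delta_{G}}G$ via the relation $S_{u}=(u\ot\Id)\circ\Phi$, and conclude by Theorem \ref{thm1} and injectivity of $\Phi$. The only cosmetic difference is that the paper obtains $\Phi(\BLT)=L(G)\dscr_{\delta_{G}}G$ directly from the generators and then invokes Theorem \ref{thm1} to pass to the Fubini crossed product, whereas you route the identification $\Phi(\BLT)=L(G)\dfcr_{\delta_{G}}G$ through the classical W*-crossed product; both are valid.
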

\begin{proof}Recalling that $ U_{G}W_{\Lambda}=W_{G}S $, we have:
	\begin{align*} 
	\Phi(\lambda_{s})&=W_{\Lambda}^{*}U_{G}^{*}(\lambda_{s}\ot1)U_{G}W_{\Lambda}\\
	&=SW_{G}^{*}(\lambda_{s}\ot1)W_{G}S\\
	&=S\delta_{G}(\lambda_{s})S\\
	&=\delta_{G}(\lambda_{s}),
	\end{align*}
	for any $ s\in G $.
	
	On the other hand, since $ \delta_{G}(f)=f\ot 1 $ for any $ f\in \LI $ we get:
	\begin{align*} 
	\Phi(f)&=W_{\Lambda}^{*}U_{G}^{*}(f\ot1)U_{G}W_{\Lambda}\\&=SW_{G}^{*}(f\ot1)W_{G}S\\&=S(f\ot1)S\\&=1\ot f,
	\end{align*}
	for all $ f\in\LI $. Therefore, $ \Phi(\BLT)=L(G)\dscr_{\delta_{G}}G=L(G)\dfcr_{\delta_{G}}G $, because $ \BLT $ is the w*-closed linear span of $ \LI L(G) $.
	
	Also, from the above calculations it is clear that maps the generators
	of $ \B $ onto those of $ J^{\perp}\dscr_{\delta_{G}}G $, and so $ \Phi(\B)=J^{\perp}\dscr_{\delta_{G}}G $.
	
	Since $ \BLT=\wsp\{\LI L(G) \} $, in order to prove that $ \Phi $ is an $ \LI $-comodule monomorphism with respect to the $ \LI $-actions $ \beta_{G} $ and $ \Id_{\BLT}\ot\beta_{G} $ it suffices to verify the equality \[(\Id_{\BLT}\ot\beta_{G})\circ\Phi(x)=(\Phi\ot \Id_{\LI})\circ\beta_{G}(x) \] for $ x=\lambda_{s} $, $ s\in G $, and for $ x=f\in\LI $. Indeed, for $ s\in G $ and $ f\in \LI $, we have:
	\begin{align*}  
	(\Id_{\BLT}\ot\beta_{G})\circ\Phi(\lambda_{s})&=(\Id_{\BLT}\ot\beta_{G})(\lambda_{s}\ot\lambda_{s})\\&=\lambda_{s}\ot\lambda_{s}\ot1\\&=\Phi(\lambda_{s})\ot1\\&=(\Phi\ot\Id)(\lambda_{s}\ot1)\\&=(\Phi\ot\Id)(\beta_{G}(\lambda_{s}))
	\end{align*}
	On the other hand, since $ \Phi(g)=1\ot g $ for all $ g\in\LI $ it follows that $ (\Phi\ot \Id)(y)=1\ot y $ for any $y\in\LI\vt\LI $. Therefore, when $ f\in \LI $ we get: 
	\begin{align*} 
	(\Id_{\BLT}\ot\beta_{G})\circ\Phi(f)&=(\Id_{\BLT}\ot\beta_{G})(1\ot f)\\&=1\ot\beta_{G}(f)\\&=(\Phi\ot \Id_{\LI})(\beta_{G}(f)),
	\end{align*}
	because $ \beta_{G}(f)\in \LI\vt\LI $.
	
	It remains to show that $ \Phi(\Sa)=J^{\perp}\dfcr_{\delta_{G}}G $. To this end, we first prove  the following:
	\begin{equation}\label{eq6} 
	S_{u}=(u\ot\Id_{\BLT})\circ\Phi,\quad\text{for all } u\in A(G).
	\end{equation}
	Indeed, if $ s\in G $ and $ f\in\LI $, then we have:
	\begin{align*} 
	S_{u}(f\lambda_{s})&=u(s)f\lambda_{s}\\&=(u\ot\Id)(\lambda_{s}\ot(f\lambda_{s}))\\&=(u\ot\Id)((1\ot f)(\lambda_{s}\ot\lambda_{s}))\\&=(u\ot\Id)(\Phi(f)\Phi(\lambda_{s}))\\&=(u\ot\Id)(\Phi(f\lambda_{s}))
	\end{align*}
	and hence \eqref{eq6} follows because $ \BLT=\wsp\{\LI L(G)\} $.
	
	 Therefore, we get: 
	\begin{align*} 
	J^{\perp}\dfcr_{\delta_{G}}G&=\left( J^{\perp}\vt\BLT \right)^{\wt{ \delta_{G} } }\\
	&=\left( L(G)\vt\BLT \right)^{\wt{ \delta_{G} } } \cap \left( J^{\perp}\vt\BLT \right)\\
	&= \left( L(G)\dfcr_{\delta_{G}}G \right) \cap \left( J^{\perp}\vt\BLT \right)\\
	&=\Phi(\BLT)\cap\left( J^{\perp}\vt\BLT \right)\\
	&=\left\lbrace T\in\Phi(\BLT):\ (\Id\ot\omega)(T)\in J^{\perp},\ \forall\omega\in\BLT_{*} \right\rbrace\\
	&=\left\lbrace T\in\Phi(\BLT):\ \la(\Id\ot\omega)(T),u\ra=0,\ \forall\omega\in\BLT_{*},\ \forall u\in J \right\rbrace\\
	&=\left\lbrace T\in\Phi(\BLT):\ \la(u\ot \Id)(T),\omega\ra=0,\ \forall\omega\in\BLT_{*},\ \forall u\in J \right\rbrace\\
	&=\left\lbrace T\in\Phi(\BLT):\ (u\ot \Id)(T)=0,\ \forall u\in J \right\rbrace\\
	&=\Phi\left( \bigcap\{\ker S_{u}:\ u\in J \} \right), 
	\end{align*}
	where the last equality follows from \eqref{eq6}. Thus $ J^{\perp}\dfcr_{\delta_{G}}G=\Phi(\Sa) $. 
	
	Finally, since $ J^{\perp}\dfcr_{\delta_{G}}G=J^{\perp}\scr_{\delta_{G}}G $ (by Theorem \ref{thm1}), it follows that $ \B=\Sa $.	
\end{proof} 

\begin{remark}\label{re1} Let $ J $ be a closed ideal of $ A(G) $. By Proposition \ref{p12} we get that $ \Sa $ is an $ \LI $-subcomodule of $ (\BLT,\beta_{G}) $. Observe that 
	\begin{align*} 
	\Sa^{\beta_{G}}&=\BLT^{\beta_{G}}\cap\Sa\\
	&=L(G)\cap\Sa
	\end{align*} 
	and thus the isomorphism $ \Phi\colon(\Sa,\beta_{G})\to(J^{\perp}\dfcr_{\delta_{G}}G,\wh{\delta_{G}}) $  maps the intersection $ L(G)\cap\Sa $ onto  $ \left(J^{\perp}\dfcr_{\delta_{G}}G \right)^{\wh{\delta_{G}}}=\mathrm{Sat}(J^{\perp},\delta_{G})  $ (see Proposition \ref{p11}). Also, $ \Phi(J^{\perp})=\delta_{G}(J^{\perp}) $.
\end{remark} 
Note that Remark \ref{re1} in combination with Theorem \ref{thmAKT} yields the following:

\begin{cor}\label{c8} For any closed ideal $ J $ of $ A(G) $ the following are equivalent:
	\begin{itemize}
		\item [(i)] $ (J^{\perp},\delta_{G}) $  is saturated;
		\item [(ii)] $ L(G)\cap\B=J^{\perp}  $.
	\end{itemize}	
\end{cor}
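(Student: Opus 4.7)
The plan is to deduce Corollary \ref{c8} as a direct consequence of Theorem \ref{thmAKT} together with Remark \ref{re1}, which package together all the heavy lifting already done. Recall that by Definition \ref{d2}, the $L(G)$-comodule $(J^{\perp},\delta_{G})$ is saturated precisely when $\delta_{G}(J^{\perp})=\mathrm{Sat}(J^{\perp},\delta_{G})$. On the other hand, the normal $*$-monomorphism $\Phi$ constructed in Proposition \ref{p12} is injective, so any equality between subspaces of $\BLT$ is equivalent to the equality of their images under $\Phi$.

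Concretely, I would first rewrite condition (ii) by invoking Theorem \ref{thmAKT}, which identifies $\B$ with $\Sa$; thus (ii) becomes $L(G)\cap\Sa=J^{\perp}$. Next I would apply $\Phi$ to both sides of this equation: according to Remark \ref{re1}, $\Phi(J^{\perp})=\delta_{G}(J^{\perp})$, while the intersection $L(G)\cap\Sa=\Sa^{\beta_{G}}$ is mapped by $\Phi$ onto the fixed point subspace $(J^{\perp}\dfcr_{\delta_{G}}G)^{\wh{\delta_{G}}}$, which equals $\mathrm{Sat}(J^{\perp},\delta_{G})$ by Proposition \ref{p11}. Combining these observations with the injectivity of $\Phi$, I obtain the chain of equivalences
\[
L(G)\cap\B=J^{\perp}\iff L(G)\cap\Sa=J^{\perp}\iff \mathrm{Sat}(J^{\perp},\delta_{G})=\delta_{G}(J^{\perp}),
\]
and the last condition is exactly the saturation of $(J^{\perp},\delta_{G})$.

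There is no real obstacle to overcome at this point: the argument is essentially a one-line bookkeeping once the identifications of Proposition \ref{p12} and Remark \ref{re1} are in place. The genuine content has already been absorbed into Theorem \ref{thmAKT} of Anoussis--Katavolos--Todorov and into the structural fact that $\Phi$ implements an $\LI$-comodule isomorphism from $(\Sa,\beta_{G})$ onto $(J^{\perp}\dfcr_{\delta_{G}}G,\wh{\delta_{G}})$ carrying $J^{\perp}$ to $\delta_{G}(J^{\perp})$ and the $\beta_{G}$-fixed points to the $\wh{\delta_{G}}$-fixed points.
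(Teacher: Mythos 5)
Your proof is correct and follows essentially the same route as the paper, which derives the corollary directly from Remark \ref{re1} (the identifications $\Phi(J^{\perp})=\delta_{G}(J^{\perp})$ and $\Phi(L(G)\cap\Sa)=\mathrm{Sat}(J^{\perp},\delta_{G})$) combined with Theorem \ref{thmAKT}. The bookkeeping via injectivity of $\Phi$ is exactly what the paper intends when it says the corollary is "yielded" by these two results.
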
 

In \cite{AKT1} Anoussis, Katavolos and Todorov proved that if $ A(G) $ admits an approximate unit (not necessarily bounded), then
\[L(G)\cap\B=J^{\perp}\] for any closed ideal $ J$ of $ A(G) $ \cite[Lemma 4.5]{AKT1}. They asked whether the same conclusion holds for an arbitrary group $ G $. Clearly, from Corollary \ref{c8}, this question is equivalent to asking whether every $ L(G) $-subcomodule of $ (L(G),\delta_{G}) $ is saturated. Using this point of view, we prove below (Proposition \ref{p7}) that a condition which is slightly weaker than the existence of a (possibly unbounded) approximate unit in $ A(G) $ is necessary and sufficient. This improves \cite[Lemma 4.5]{AKT1}.

\begin{defin} Let $ G $ be a locally compact group. Following \cite[Remark 5.1.8 (2)]{Kan}, we say that $ G $ has \textit{Ditkin's property at infinity} (or property $ D_{\infty} $ for short), if 
	\[u\in\overline{A(G)u}^{||\cdot||},\quad\forall u\in A(G). \]
	Also, following \cite{Ey}, we say that an element $ x\in L(G) $ satisfies \textit{condition (H)} if
	\[x\in \overline{A(G)\cdot x}^{\text{w*}}. \]
\end{defin}

Although the equivalence between statements (a) to (c) in the next result is already known (see e.g. \cite{Ey}), we have included its proof for the sake of completeness. 

\begin{pro}\label{p7} Let $ G $ be a locally compact group. Then, the following conditions are equivalent:
	\begin{itemize}
		\item[(a)] $ G $ has property $ D_{\infty} $. 
		\item[(b)] Every $ x\in L(G) $ satisfies condition (H).  
		\item[(c)] For any $ x\in L(G) $ and $ h\in A(G) $, if $ h\cdot x=0 $, then $ \la x,\ h\ra=0 $.
		\item[(d)] For any $ L(G) $-subcomodule $ Y $ of $ (L(G),\delta_{G}) $ and any $ x\in L(G) $ we have
		\[\delta_{G}(x)\in Y\ft L(G)\iff x\in Y. \] 
		\item[(e)] Every $ L(G) $-subcomodule of $ (L(G),\delta_{G}) $ is saturated.
		\item[(f)] Every $ L(G) $-subcomodule of $ (L(G),\delta_{G}) $ is non-degenerate. 
		\item[(g)] For every closed ideal $ J $ of $ A(G) $, we have $ L(G)\cap\B=J^{\perp}$. 
     \end{itemize}	
\end{pro}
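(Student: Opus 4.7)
The plan is to verify the equivalences in a hub-and-spoke fashion with condition (b) at the hub, exploiting the fact that by Remark \ref{rem3} the $L(G)$-subcomodules of $(L(G),\delta_G)$ are exactly the w*-closed $A(G)$-submodules of $L(G)$, i.e.\ the annihilators $J^{\perp}$ of closed ideals $J\sub A(G)$. The key algebraic ingredient throughout will be the adjoint identity
\[
\la h\cdot x,u\ra=\la x,uh\ra,\qquad u,h\in A(G),\ x\in L(G),
\]
which is immediate from the fact that the product on $A(G)\simeq L(G)_{*}$ is pointwise multiplication.

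I would start with the classical equivalence (a) $\iff$ (b) $\iff$ (c), which is a sequence of Hahn--Banach arguments on the dual pair $(A(G),L(G))$: for (a) $\Rightarrow$ (c), if $h\cdot x=0$ then $A(G)h\sub(\mathbb{C}x)^{\perp}$, and $D_{\infty}$ forces $h\in\overline{A(G)h}^{\|\cdot\|}\sub(\mathbb{C}x)^{\perp}$; for (c) $\Rightarrow$ (a), fix $u\in A(G)$ and apply (c) to every $\phi$ in the annihilator of $\overline{A(G)u}^{\|\cdot\|}$ to conclude $\phi(u)=0$; for (c) $\Rightarrow$ (b), check that if $\phi$ annihilates $\overline{A(G)\cdot x}^{\mathrm{w^{*}}}$ then the adjoint identity gives $\phi\cdot x=0$, whence $\phi(x)=0$; and for (b) $\Rightarrow$ (c), write $x$ as a w*-limit of $h_{\alpha}\cdot x$ and use $h\cdot x=0$ to get $\la h_{\alpha}\cdot x,h\ra=\la x,hh_{\alpha}\ra=0$ for all $\alpha$.

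I would then bring in the comodule-theoretic conditions. First, (d) $\iff$ (e) is immediate from Remark \ref{rem7}, since for an $L(G)$-subcomodule $Y$ of $(L(G),\delta_G)$, saturation reduces to $\delta_G(Y)=\delta_G(L(G))\cap(Y\ft L(G))$, and $\delta_G$ is injective. Next, the bridge (b) $\iff$ (d) uses only the slice characterization of the Fubini tensor product: (b) $\Rightarrow$ (d) because $\delta_G(x)\in Y\ft L(G)$ forces $h\cdot x=(\Id\ot h)(\delta_G(x))\in Y$ for all $h\in A(G)$, hence $x\in\overline{A(G)\cdot x}^{\mathrm{w^{*}}}\sub Y$; and (d) $\Rightarrow$ (b) by applying (d) to $Y=\overline{A(G)\cdot x}^{\mathrm{w^{*}}}$, which is an $L(G)$-subcomodule of $L(G)$ by Remark \ref{rem3} and into whose Fubini product with $L(G)$ the element $\delta_G(x)$ slices. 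Then (e) $\iff$ (f) is routed through (b): (b) $\Rightarrow$ (f) is trivial via Corollary \ref{cor6.2}, while for (f) $\Rightarrow$ (b) I would apply (f) to $Y=\overline{\mathbb{C}x+A(G)\cdot x}^{\mathrm{w^{*}}}$ and use Corollary \ref{cor6.2} together with w*-continuity of each slice map $h\cdot(-)$ to obtain $Y=\wsp\{A(G)\cdot Y\}\sub\overline{A(G)\cdot x}^{\mathrm{w^{*}}}$, so that $x\in Y$ gives (b). Finally, (e) $\iff$ (g) is just Corollary \ref{c8} coupled with the bijection between $L(G)$-subcomodules of $(L(G),\delta_G)$ and closed ideals of $A(G)$ recorded at the start of Section \ref{sec6}.

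The main obstacle is essentially bookkeeping rather than any deep new input: every step is a short Hahn--Banach argument or a formal computation with the Fubini product and the adjoint identity. The only place that needs slight care is (f) $\Rightarrow$ (b), where one must include the line $\mathbb{C}x$ inside the subcomodule $Y$ (so that $x\in Y$ is automatic) and then leverage non-degeneracy of $Y$ via Corollary \ref{cor6.2}, together with the w*-continuity of the slice maps, to push $x$ into the smaller space $\overline{A(G)\cdot x}^{\mathrm{w^{*}}}$.
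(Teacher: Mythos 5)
Your proposal is correct and follows essentially the same route as the paper: the (a)--(c) equivalences are the same Hahn--Banach arguments on the dual pair $(A(G),L(G))$, the bridge between (b), (d) and (e) uses the same auxiliary subcomodule $\overline{A(G)\cdot x}^{\mathrm{w}^{*}}$ together with Remark \ref{rem7}, the passage to (f) goes through Corollary \ref{cor6.2}, and (e)$\iff$(g) is Corollary \ref{c8}. The only genuine variation is your (f)$\implies$(b) step, where you enlarge to $\overline{\mathbb{C}x+A(G)\cdot x}^{\mathrm{w}^{*}}$ and apply non-degeneracy of that subcomodule, whereas the paper proves (f)$\implies$(e) using the auxiliary subcomodule $Y_{1}=\{x\in L(G):\ A(G)\cdot x\subseteq Y\}$; both are correct and amount to the same collapsing argument.
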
 	 
\begin{proof} (b)$ \implies $(a): Suppose that every element in $ L(G) $ satisfies condition (H) and that  there exists $ u\in A(G) $, such that $ u\notin\overline{A(G)u}^{||\cdot||} $. Then, there exists $ x\in L(G) $, such that $ \la x,u\ra\neq0 $ and $ \la x,vu\ra=0 $, for all $ v\in A(G) $. This means that $  \la v\cdot x,u\ra=0 $, for all $ v\in A(G) $ and since $ x $ satisfies condition (H) it is implied that $ \la x,u\ra=0 $, a contradiction.  	
	
	(c)$ \implies $ (b): Suppose that for any $ x\in L(G) $ and $ h\in A(G) $, $ h\cdot x=0 $ implies that $ \la x,\ h\ra=0 $. If there exists an  $ x\in L(G) $, such that $ x\notin \overline{A(G)\cdot x}^{\text{w*}}$, then there must be an $ h\in A(G) $, such that $ \la x,\ h\ra\neq0 $	and $ \la u\cdot x,h\ra=0 $, for any $ u\in A(G) $. But $ \la u\cdot x,h\ra=\la  x,hu\ra =\la  x,uh\ra=\la h\cdot x,u\ra$, therefore we get that $ \la h\cdot x,u\ra=0 $, for all $ u\in A(G) $ and thus $ h\cdot x=0 $, which implies that $ \la x, h\ra =0$, by hypothesis. Hence, we have a contradiction.
	
	(a)$ \implies $ (c): Assume that $ G $ has $ D_{\infty} $ and there exist $ x\in L(G) $ and $ h\in A(G) $, such that $ h\cdot x=0 $ and $ \la x,h\ra\neq0 $. Then, $ \la h\cdot x,u\ra=0 $, for all $ u\in A(G) $, that is $  \la  x,uh\ra=0 $, for all $ u\in A(G) $. But, since  $ G $ has $ D_{\infty} $, we have that there is a net $ (u_{i}) $ in $ A(G) $, such that $ u_{i}h\longrightarrow h $. Therefore, $ \la x,h\ra=\lim \la x,u_{i}h\ra=0 $, which is a contradiction. 
	
	(e)$ \implies $(d): Let $ Y $ be an $ L(G) $-subcomodule of $ (L(G),\delta_{G}) $ and let $ x\in L(G) $, with $ \delta_{G}(x)\in Y\ft L(G) $. Then, by the coassociativity of $ \delta_{G} $, we have that $ (\delta_{G}\ot \mathrm{id}_{L(G)})(\delta_{G}(x))=(\mathrm{id}_{Y}\ot\delta_{G}) (\delta_{G}(x)) $. Thus, $ \delta_{G}(x)\in \mathrm{Sat}(Y,\delta_{G})=\delta_{G}(Y) $, since $ Y $ is saturated. Therefore, $ x\in Y $, because $ \delta_{G} $ is isometric.
	
	(d)$ \implies $(b): Take an $ x\in L(G) $ and put $ Y:=\overline{A(G)\cdot x}^{\text{w*}} $. Then, $ Y $ is clearly a subcomodule of $ (L(G),\delta_{G}) $ (because it is an $ A(G) $-submodule) and $ \delta_{G}(x)\in Y\ft L(G) $. Indeed, if not, then there must be $ h,\ u\in A(G) $, such that $ \la y,u\ra=0 $, for all $ y\in Y $, and $ \la \delta_{G}(x),u\ot h \ra\neq0$. But $ \la \delta_{G}(x),u\ot h \ra\neq0$ implies that   $ \la h\cdot x,u \ra\neq0$, while $ u $ annihilates $ Y $ and $ h\cdot x\in Y $ by definition, which is a contradiction. Therefore, $ \delta_{G}(x)\in Y\ft L(G) $ and (d) implies that $ x\in Y $.
	
	(b)$ \implies $(f): This follows immediately from Corollary \ref{cor6.2}.
	
	(f)$ \implies $(e): Suppose that every $ L(G) $-subcomodule of $ (L(G),\delta_{G}) $ is non-degenerate. Let $ Y $ be an $ L(G) $-subcomodule of $ L(G) $. If we put
	\[Y_{1}:=\{x\in L(G):\ A(G)\cdot x\sub Y \}, \]
	then clearly $ Y_{1} $ is an $ L(G) $-subcomodule of $ L(G) $ which contains $ Y $. Furthermore, it is clear by the definition of $ Y_{1} $ that
	\[\delta_{G}(Y_{1})=(Y\ft L(G))\cap \delta_{G}(L(G))= \mathrm{Sat}(Y,\delta_{G}). \]
	 Since $ Y_{1} $ is non-degenerate by assumption, we get  that \[Y_{1}=\wsp\{A(G)\cdot Y_{1}\}\sub Y \] and therefore $ Y=Y_{1} $, that is $ Y $ is saturated because $ \delta_{G}(Y_{1})=\mathrm{Sat}(Y,\delta_{G}) $.
	 
	 (e)$ \iff $(g): This follows from Corollary \ref{c8} since the map $ J\mapsto J^{\perp} $ is clearly a bijection between the set of all closed ideals of $ A(G) $ and the set of all $ L(G) $-subcomodules of $ (L(G),\delta_{G}) $.
\end{proof}	 
 
\begin{remark} The author does not know whether $ A(G) $ has property $ D_{\infty} $ for all locally compact groups $ G $. On the other hand, Propositions \ref{p9} and \ref{p7} imply that if $ G $ has the AP, then $ A(G) $ has property $ D_{\infty} $ (see also \cite[Proposition 1.19]{HK} and the authors' comment after that), but whether the converse is true or not remains unknown. 
	
Also, Proposition \ref{p7} implies that a closed ideal $ J $ of the Fourier algebra $ A(G) $ can be recovered from $ \B $, i.e. the map $ J\mapsto \B $ is one to one, at least when $ G $ has property $ D_{\infty} $. It is unknown whether this map is one to one for an arbitrary locally compact group $ G $.   	 
\end{remark} 
	
\section*{Acknowledgments}
The author is really grateful to Aristides Katavolos and Michael Anoussis for helpful suggestions and discussions as well as to Matthias Neufang for stimulating discussions during his visit to the National and Kapodistrian University of Athens in September 2019. The research work was supported by the Hellenic Foundation for Research and Innovation (HFRI) and the General Secretariat for Research and Technology (GSRT), under the HFRI PhD Fellowship Grant (GA. no. 74159/2017).

\end{document}